\documentclass[12pt]{amsart}

\usepackage{epsfig}
\usepackage{amsmath}
\usepackage{amssymb}
\usepackage{amscd}
\usepackage{graphicx}
\usepackage{xcolor}
\usepackage{float}
\usepackage{verbatim}
\usepackage{bbm}
\usepackage{cancel}
\usepackage{enumerate}
\usepackage[bookmarks=false,hidelinks, colorlinks, citecolor=black,linkcolor=blue]{hyperref}
\usepackage{tikz,pgfplots}
\usetikzlibrary{patterns.meta}
\usetikzlibrary{patterns}
\pgfplotsset{compat=1.17}

\newtheorem{thm}{Theorem}[section]
\newtheorem{lem}[thm]{Lemma}
\newtheorem{prop}[thm]{Proposition}

\newtheorem{ques}[thm]{Question}
\newtheorem{cor}[thm]{Corollary}
\newtheorem{maintheorem}{Theorem}

\newtheoremstyle{remarkbold}
  {6pt plus 2pt minus 2pt} 
  {6pt plus 2pt minus 2pt} 
  {\normalfont}
  {}
  {\bfseries}
  {.}
  {.5em} 
  {}

\theoremstyle{remarkbold}
\newtheorem{defn}[thm]{Definition}
\newtheorem{rem}[thm]{Remark}
\newtheorem{claim}[thm]{Claim}
\newtheorem{exam}[thm]{Example}

\def \N {\mathbb N}
\def \T {\mathbb T}

\def \Z {\mathbb Z}\def\Sep{\mathrm{Sep}}
\def \R {\mathbb R}

\def \Q {\mathbb Q}
\def \U {\mathcal U}
\def \P {\mathcal P}

\def \M {\mathcal M}

\def \xp {(X,\R)}

\DeclareMathOperator{\mdim}{mdim}
\def \sq {sequence}

\def\ocap {\mathrm{ocap}}

\DeclareMathOperator{\id}{Id}
\DeclareMathOperator{\PP}{P}

\def \d {\delta}

\def \pp {\partial^\R }
\DeclareMathOperator{\ip}{\mathrm{Int}^\R}

\def\mc{\mathcal}

\DeclareMathOperator{\per}{P}
\DeclareMathOperator{\stab}{stab}
\DeclareMathOperator{\Int}{Int}

\numberwithin{equation}{section}

\subjclass[2020]{Primary: 37B40.  Secondary: 54F45.}
\keywords{Topological flows, mean dimension,  small boundary property, small flow boundary property, flow-generated entropy, dynamical Cantor staircase function, marker property.}

\thanks{T.D. and Y.G. were partially supported by the National Science Centre (Poland) grant 2020/39/B/ST1/02329.
C.L. was supported by the
Postdoctoral Fellowship Program and China Postdoctoral Science Foundation under Grant Number BX20250067, and the China Postdoctoral Science Foundation under Grant Number 2025M773074.}

\thanks{\textsuperscript{*}Corresponding author.}
\begin{document}

\title[Dynamical Cantor Staircase Functions \& SFBP]{Dynamical Cantor Staircase Functions and The Small Flow Boundary Property}

\author[Tomasz Downarowicz, Yonatan Gutman and Chunlin Liu]{Tomasz Downarowicz, Yonatan Gutman and Chunlin Liu\textsuperscript{*}}

\address{\vskip 2pt \hskip -12pt Tomasz Downarowicz}

\address{\hskip -12pt Faculty of Pure and Applied Mathematics, Wroc\l aw University of Technology, Wroc\l aw, Poland}

\email{downar@pwr.edu.pl}

\medskip
\address{\vskip 2pt \hskip -12pt Yonatan Gutman}

\address{\hskip -12pt Institute of Mathematics, Polish Academy of Sciences, ul. Śniadeckich 8, 00-656 Warszawa, Poland}

\email{gutman@impan.pl}

\medskip
\address{\vskip 2pt \hskip -12pt Chunlin Liu}
\address{\hskip -12pt School of Mathematical Sciences, Dalian University of Technology, Dalian, 116024, P.R. China}
\address{\hskip -12pt Institute of Mathematics, Polish Academy of Sciences, ul. Śniadeckich 8, 00-656 Warszawa, Poland}

\email{chunlinliu@mail.ustc.edu.cn}

\begin{abstract}
The small flow boundary property (SFBP), introduced by Burguet for fixed-point free topological flows, is a non-trivial generalization of the small boundary property (SBP).
We  characterize  when a time-discretization of such a flow satisfies the SBP and deduce that an SFBP flow admitting an aperiodic time-discretization has vanishing mean dimension.

Furthermore, we introduce a new quantity, \textit{flow-generated entropy}, for a factor between a flow and  a time-discretization, quantifying the dynamical complexity inherited from the flow itself. This is used in order to establish that any time-discretization of a  flow with SFBP admits factors of arbitrarily small flow-generated entropy separating any fixed pair of distinct points. The argument relies on a construction of a dynamical version of the Cantor staircase function. 

Finally, the appendix includes proofs of fundamental properties of the marker property which have not yet appeared in the literature. 

\end{abstract}

\maketitle
\tableofcontents

\section{Introduction}
In 1991 Shub and Weiss (\cite{SW91}) introduced the notion of \textit{small} sets for topological dynamical systems. For closed sets these are exactly the sets whose measure with respect to any invariant measure is zero. Subsequently Lindenstrauss and Weiss (\cite{LW}) investigated topological dynamical systems with the \textit{small boundary property} (SBP). These are systems admitting a basis of open sets whose boundaries are small. Arguably this is a natural dynamical analog of the zero-dimensional spaces, spaces which play an important role in many branches of mathematics such as topology, topological dynamics, descriptive set theory, and ergodic theory. It is therefore perhaps not surprising that SBP has attracted increasing attention and has been found to have profound applications (see, e.g., \cite{L99, LW, BDz2004, Dz2005, GutmanLindenstraussTsukamoto2016,gutman2017embedding,kerr2020almost,KerrKopsacheilisPetrakos2024,KopsacheilisLiaoTikuisisVaccaro2025}). In particular, SBP implies vanishing mean dimension (\cite{LW}) and representability as an inverse limit of finite entropy systems (\cite{L99}). Whether the two latter properties are equivalent remains one of the outstanding open problems in the theory of mean dimension.

Alongside discrete-time $\mathbb{Z}$-topological dynamical systems, continuous-time $\mathbb{R}$-topological dynamical systems, known as \textit{topological flows}, occupy a distinguished position. Indeed, many dynamical systems encountered in nature evolve continuously and are therefore naturally modeled by an $\mathbb{R}$-action. Given the growing importance of the small boundary property, it is natural to investigate SBP in the context of topological flows (and perhaps more generally, continuous group actions). The first obstacle one encounters, however, is how to formulate an appropriate definition. 

It is not difficult to see that a direct generalization of SBP to the $\R$-setting does not yield an appropriate definition. Indeed, any set that is ``transversal" to the flow direction has measure zero with respect to every $\R$-invariant measure. Thus the class of flows admitting bases of open sets with boundaries having this property seems overly broad. In \cite{burguet2019symbolic}, Burguet addressed this difficulty by introducing the \textit{small flow boundary property} (SFBP) for fixed-point free topological flows. The definition requires any closed cross-section of the flow  to admit a basis of (relatively) open sets whose generated   \textit{flow-boxes} have small boundaries. This crucially includes the part of the boundary which is aligned with the flow. Using the fact that this requirement must hold for every closed cross-section one can show that a topological dynamical system has SBP if and only if any suspension
flow over it (under an arbitrary roof function) has SFBP (\cite[Lemma~2.15]{burguet2019symbolic}). 

Beyond suspension flows, the small flow boundary property poses formidable challenges and no general characterization is known. A natural approach is to study the relation between SBP and SFBP in terms of the  $\tau$-discretizations  of a flow $(X,\R)$, that is, the induced (discrete) topological dynamical system $(X,\tau)$ corresponding to translations by $\tau\in \R$. 
In this vein,
for flows with SFBP, we characterize when $(X,\tau)$ has SBP:
 \begin{maintheorem}\label{thm:SFBPvsSBP_tau}
    Let $(X,\R)$ be a topological flow with SFBP. Then for any $\tau \in \mathbb{R}$, the following are equivalent:
    \begin{enumerate}
        \item $(X,\tau)$ is aperiodic.
        \item $(X,\tau)$ has SBP.
    \end{enumerate}
\end{maintheorem}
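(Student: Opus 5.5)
The direction (2)$\Rightarrow$(1) I would do first, since it is the easy one. If $\tau=0$ then $(X,0)$ is the identity. It is not aperiodic (for nonempty $X$), and it does not have SBP either, because every point mass is invariant. So assume $\tau\neq 0$ and suppose $x$ is a periodic point of $(X,\tau)$ with $\tau$-period $n$. Then the finite orbit $\{x,\tau x,\dots,(n-1)\tau x\}$ carries an invariant atomic probability measure $\mu$, and SBP forces every point to have a neighbourhood basis whose boundaries have $\mu$-measure zero. I would take any $U$ containing $x$, not containing the other orbit points, with $\mu(\partial U)=0$. Then $\mu(U)=1/n$. Varying $U$ along a basis gives no contradiction by itself, so the real argument is different: $(X,\tau)$ is a factor-free subsystem, and the \emph{finite} invariant set $O=\{x,\dots\}$ is a closed invariant set. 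SBP passes to closed invariant subsystems, and a finite nontrivial space with a periodic homeomorphism lacks SBP, since points of $O$ are isolated in $O$ but have positive measure. Here the standard fact is that SBP for a finite periodic orbit fails because small means zero measure for the unique invariant measure, while boundaries of small neighbourhoods of $x$ in $X$ must have measure zero. That is achievable, so the correct obstruction is the classical one: SBP implies $\mathrm{mdim}=0$, and in fact Lindenstrauss--Weiss need aperiodicity only for the converse. I therefore expect that the paper's convention defines SBP, or the relevant statement, so that periodic points are excluded. Concretely, I would argue via the flow: if $x$ is $\tau$-periodic and the flow is fixed-point free, then $x$ lies on a closed $\R$-orbit of period $p$ with $n\tau\in p\Z$. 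The closed set $\R x$ is a circle on which $\tau$ acts as a rotation, and every closed set in the circle with empty interior and zero measure is acceptable. So the contradiction must come from the precise definition used, and I would check that definition and adjust this step accordingly.

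For (1)$\Rightarrow$(2), which is the substantive direction, fix $\tau\neq0$ with $(X,\tau)$ aperiodic and a point $x$ with open neighbourhood $V$. I would proceed in four steps:

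\begin{enumerate}
\item Since the flow is fixed-point free, choose a closed local cross-section $S$ through $x$ and $\eta>0$ so that the flow box $S_{[-\eta,\eta]}$ is a neighbourhood of $x$ inside $V$.
\item By SFBP, pick a relatively open $W\subset S$ containing $x$ such that the flow box $W_{(-\eta',\eta')}$ has small (flow-)boundary for every $\R$-invariant measure.
\item Its boundary consists of the two transversal faces $\overline W\times\{\pm\eta'\}$ and the lateral part $\partial_S W\times[-\eta',\eta']$.
\item Show this boundary is null for every $\tau$-invariant measure $\nu$.
\end{enumerate}

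The key point in step 4 is that $\nu$ averaged over $t\in[0,\tau]$, namely $\bar\nu=\frac1\tau\int_0^\tau t_*\nu\,dt$, is $\R$-invariant. The lateral part $L$ satisfies $\bar\nu(L)=0$, and since $L$ is a union of flow segments, $t_*\nu(L)$ controls $\nu(L)$ for small $t$; this gives $\nu(L)=0$.

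The transversal faces are the main obstacle, because they are null for $\bar\nu$ automatically but not for $\nu$. I would try first to fix this by choosing the height $\eta'$ generically. For a fixed $\nu$, only countably many heights $s$ give $\nu(\overline W\times\{s\})>0$, since these sets are disjoint. The difficulty is that there are uncountably many measures. Here I would use aperiodicity of $\tau$: a face $F$ with $\nu(F)>0$ for some ergodic $\nu$ forces, by Poincaré recurrence, a return $n\tau F\cap F\neq\emptyset$ with the flow time $n\tau$ landing exactly on the same transversal. Shrinking via the SFBP basis and varying the height continuously, one can arrange that the faces $F_s$ are pairwise disjoint and so thin that any positive-measure face would yield a periodic structure. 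If that fails, I would replace exact faces by the Lindenstrauss--Weiss criterion (SBP is equivalent to small sets separating points from closed sets), using the flow box boundary together with a Baire-category choice of $s$ over the compact simplex of invariant measures.
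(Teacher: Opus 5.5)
Both directions have genuine gaps.

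\textbf{(2)$\Rightarrow$(1).} You never reach a contradiction, and you end up doubting the statement; it does hold with the standard definition. Two of your intermediate claims are false. First, a finite periodic orbit \emph{does} have SBP: it is zero-dimensional, so singletons are clopen with empty boundary. Second, the $\tau$-invariant measures on the circle $\R x$ are not just Lebesgue measure.

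The observation you are missing is this. If $(n\tau)x=x$ and $\R x$ is a circle of length $a$, then $n\tau\in a\Z$, so the map $n\tau$ is the identity on the \emph{whole} circle. Thus every point of $\R x$ is $\tau$-periodic, and $\tau$ acts there as a rational rotation. Now take a neighbourhood $U$ of $x$ not containing $\R x$. Connectedness of $\R x$ yields $q\in\partial U\cap\R x$. The atomic measure $\mu_q=\frac1n\sum_{j=0}^{n-1}\delta_{(j\tau)q}$ is $\tau$-invariant with $\mu_q(\partial U)\ge 1/n$, so $\ocap(\partial U)>0$.

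This is precisely the paper's route. SBP forces $\dim\per(X,\tau)=0$ (Lemma~\ref{lem:SBP-implies-periodic-zero-dim}), whereas a single $\tau$-periodic point of a fixed-point free flow forces $\dim\per(X,\tau)\ge 1$ (Lemma~\ref{lem:SBPtau-implies-tau-aperiodic}). Your $\tau=0$ case also needs one more remark: $X$ is not zero-dimensional, because orbits of a fixed-point free flow are nondegenerate connected sets.

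\textbf{(1)$\Rightarrow$(2).} Your treatment of the lateral part is essentially fine. Average $\nu$ over $[0,\tau]$, and use a box of height $\eta'$ strictly below the time $\eta$ on which SFBP gives nullity. Then the translates $(-t)([-\eta,\eta]\pp W)$ with $0\le t\le\eta-\eta'$ cover the lateral part.

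The transversal faces, however, are the whole difficulty, and neither of your fixes works. Varying the height cannot succeed. Take the unit-roof suspension $X=S_1Z$ of an odometer $(Z,T)$ with unique invariant measure $\mu$, and let $\tau=1$. The flow has SFBP because the base is zero-dimensional, and $(X,1)$, which acts as $(z,s)\mapsto(Tz,s)$, is aperiodic. Yet if $S$ is a horizontal slice $W\times\{s_0\}$, every face $\overline W\times\{s\}$ is charged by the $1$-invariant measure $\mu\times\delta_s$, for every $s$. The same example refutes your recurrence argument: faces return to themselves under $n\tau$ constantly, yet there are no periodic points.

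What is actually needed is a closed cross-section through $x$ that is null for all $\tau$-invariant measures. In the example, this would be the graph over $\overline W$ of a continuous $h$ with $h_*(\mu|_{\overline W})$ non-atomic. Producing such a cross-section for a general SFBP flow is a genuine general-position problem of Lindenstrauss type, which SFBP does not hand you.

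The paper sidesteps it as follows:
\begin{enumerate}
\item By Burguet, $(X,\R)$ has a strongly isomorphic extension $(Y,\R)$ which is a suspension over a zero-dimensional space. So $Y$ is finite dimensional, and $(Y,\tau)$ is aperiodic.
\item Lindenstrauss's theorem gives SBP for $(Y,\tau)$.
\item Theorem~\ref{thm:kerr} yields a zero-dimensional strongly isomorphic extension of $(Y,\tau)$. This composes with $\pi$ (Lemmas~\ref{lem:time 1} and~\ref{lem:strong_composition}) to one of $(X,\tau)$.
\item Theorem~\ref{thm:kerr} again gives SBP for $(X,\tau)$.
\end{enumerate}
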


As SBP implies vanishing mean dimension it follows that suspension flows with SFBP have vanishing mean dimension (see Proposition~\ref{prop:sfbp_suspension_mdim_zero}). The validity or
falsity of this statement for general topological flows  with SFBP remains an open problem. However as vanishing mean dimension for some time-discretization of a flow implies the flow itself has vanishing mean dimension we may deduce from Theorem \ref{thm:SFBPvsSBP_tau} the following result:

\begin{maintheorem}\label{thm:SFBP_implies_mdim0}
A topological flow $(X,\R)$ with SFBP, admitting some aperiodic time-discretization, satisfies  $\mdim(X,\R)~=~0.$
\end{maintheorem}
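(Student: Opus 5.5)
The argument combines Theorem~\ref{thm:SFBPvsSBP_tau} with two standard facts: SBP forces vanishing mean dimension (Lindenstrauss--Weiss \cite{LW}), and the mean dimension of a flow is controlled by the mean dimension of any of its time-discretizations. Let $\tau\in\R$ be such that $(X,\tau)$ is aperiodic. Aperiodicity excludes fixed points, so $\tau\neq 0$, and since $(X,-\tau)$ is the inverse of $(X,\tau)$ and has the same periodic points, we may assume $\tau>0$. Theorem~\ref{thm:SFBPvsSBP_tau}, direction (1)$\Rightarrow$(2), shows that $(X,\tau)$ has SBP. By \cite{LW}, $\mdim(X,\tau)=0$.

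It remains to relate $\mdim(X,\R)$ to $\mdim(X,\tau)$. I will use the definition of flow mean dimension via $\epsilon$-widths of the Bowen-type metrics
\[
d_T(x,y)=\sup_{0\le t\le T} d(\phi_t x,\phi_t y),
\]
namely
\[
\mdim(X,\R)=\lim_{\epsilon\to0}\lim_{T\to\infty}\frac{\mathrm{Widim}_\epsilon(X,d_T)}{T}.
\]
The discrete quantity uses instead $d^\tau_N(x,y)=\max_{0\le n<N} d(\phi_{n\tau}x,\phi_{n\tau}y)$.

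\textbf{Comparing the metrics.} By uniform continuity of $(t,x)\mapsto\phi_t x$ on $[0,\tau]\times X$, for every $\epsilon>0$ there is $\delta>0$ such that $d(x,y)<\delta$ implies $d(\phi_s x,\phi_s y)<\epsilon$ for all $s\in[0,\tau]$. Consequently $d^\tau_{N+1}(x,y)<\delta$ implies $d_{N\tau}(x,y)<\epsilon$. Hence any $\delta$-embedding for $d^\tau_{N+1}$ is an $\epsilon$-embedding for $d_{N\tau}$, which gives
\[
\mathrm{Widim}_\epsilon(X,d_{N\tau})\le\mathrm{Widim}_\delta(X,d^\tau_{N+1}).
\]

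\textbf{Passing to the limit.} Divide by $N\tau$ and let $N\to\infty$. The flow width is monotone in $T$, so it suffices to take $T=N\tau$. This yields
\[
\lim_T\frac{\mathrm{Widim}_\epsilon(X,d_T)}{T}\le\frac1\tau\,\mathrm{mdim}_\delta(X,\tau)\le\frac1\tau\,\mdim(X,\tau)=0.
\]
Letting $\epsilon\to0$ gives $\mdim(X,\R)\le\mdim(X,\tau)/\tau=0$. This is in fact the known identity $\mdim(X,\R)=\mdim(X,\tau)/|\tau|$ for $\tau\neq0$, and if the paper records it earlier we simply cite it.

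The main obstacle is not this comparison, which is routine, but the implication aperiodic $\Rightarrow$ SBP for the discretization. That step is entirely supplied by Theorem~\ref{thm:SFBPvsSBP_tau}, so here it is immediate.
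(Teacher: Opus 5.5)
Your proposal is correct and follows the paper's proof: Theorem~\ref{thm:SFBPvsSBP_tau} gives SBP for $(X,\tau)$, \cite{LW} gives $\mdim(X,\tau)=0$, and the passage to the flow is Proposition~\ref{prop:mdim time-1} (from Gutman--Jin), which the paper simply cites. Your metric-comparison argument is a correct self-contained proof of the only inequality actually needed, $\mdim(X,\R)\le \mdim(X,\tau)/|\tau|$.
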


From Theorem \ref{thm:SFBPvsSBP_tau} it follows that an aperiodic $\tau$-discretization $(X,\tau)$ of a  topological flow $(X,\R)$ with SFBP, may be represented as an inverse limit
of finite entropy systems. Suppose now $\tau \in \R\setminus \{0\}$ such that $(X,\tau)$ is not aperiodic. Does $(X,\tau)$ still have a meaningful representation as an inverse limit? We give a positive answer involving a new continuous-discrete invariant of extensions.  Let $\pi:(X,\tau)\to (Y,S)$  be a factor map.
The \emph{flow-generated entropy} of $\pi$ is defined by
\[
h_{\R}(\pi):=\sup_{\mu\in \PP_\R(X)} h_{\pi_*\mu}(S),
\]
where $\PP_\R(X)$ denotes the set of all $\R$-invariant Borel probability measures on $X$. Thus, this quantity quantifies how much of the dynamical complexity of the factor is inherited from the ambient flow.
We prove a general separation theorem and deduce a inverse limit representation from it.

\begin{maintheorem}\label{thm:SFBP_small_entropy_factors}
Let $(X,\R)$ be a topological flow with SFBP, and fix $\tau\in\R$.
Then 
\begin{enumerate}
    \item  $(X,\tau)$ admits arbitrarily small flow-generated entropy factors separating points:
for every $\delta>0$ and any two distinct points $w,w'\in X$, there exists a factor map $\pi:(X,\tau)\to (Y,T)$
such that
$
\pi(w)\neq \pi(w')
$
and $h_{\R}(\pi)<\delta$.
\item 
There exists a compatible system of factor maps $\pi_n: (X,\tau)\rightarrow (W_n,T_n)$, so that  $h_{\R}(\pi_n)<\infty$ for all $n$ and $(X,\tau)=\varprojlim (W_n,T_n)$.
\end{enumerate}

\end{maintheorem}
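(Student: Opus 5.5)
The plan is to build, for given $w\neq w'$ and $\delta>0$, an explicit continuous function $f:X\to[0,1]$ and to take $\pi$ as the factor $(X,\tau)\to([0,1]^{\Z},\sigma)$, $x\mapsto (f(T^{n\tau}x))_{n\in\Z}$, where $T^t$ denotes the flow. The function $f$ will be a \emph{dynamical Cantor staircase}. It is locally constant off a small set $B$, with finitely many values on the complement of $B$, and it increases continuously across $B$.

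We may assume the flow has no fixed points, since SFBP is defined for such flows. If $w,w'$ lie on different orbits, or on the same orbit at distinct points, we first pick a closed cross-section $S$ through a point near $w$ that separates the two points in the flow-box sense. By SFBP, $S$ has a relatively open set $U\ni w$ whose flow-box $U^{[0,\eta]}$ has small boundary, and we can arrange $w'\notin\overline{U^{[0,\eta]}}$ (shrinking $U$ and $\eta$).

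For the staircase, small boundary means $\mu(\partial U^{[0,\eta]})=0$ for every $\mu\in \PP_\R(X)$. By compactness and upper semicontinuity, for each $\epsilon>0$ there is a thin closed neighbourhood $N_\epsilon$ of this boundary with $\sup_{\mu\in\PP_\R}\mu(N_\epsilon)<\epsilon$. Iterating, we choose nested neighbourhoods of boundaries of a refining sequence of flow-boxes, again from SFBP applied to nested cross-sections and shrinking heights. From these we build $f$ as a uniform limit of Urysohn-type functions in the Cantor staircase manner: at stage $k$ the function is constant on each ``step'' and interpolates only on a set of $\R$-invariant measure at most $\epsilon_k$, with $\sum\epsilon_k$ small. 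We arrange $f(w)=1$ and $f(w')=0$, so $\pi(w)\ne\pi(w')$.

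For the entropy estimate, fix $\mu\in\PP_\R(X)$, so $\mu$ is also $T^\tau$-invariant. Quantize $f$ to finitely many levels. The factor process $(f(T^{n\tau}x))$ is approximated by the partition $\{$steps$\}\cup\{B\}$, where $B$ is the interpolation region and $\mu(B)<\epsilon$. I will bound $h_{\pi_*\mu}(\sigma)$ by the entropy of the symbolic process recording which step $T^{n\tau}x$ is in. Because the steps are flow-boxes of bounded length, an orbit segment of length $n\tau$ crosses only about $n\tau/\eta$ box entries. The pattern is therefore determined by the entry times, and each entry forces a passage through the boundary region.

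The main obstacle is exactly this step. Entropy is counted along the discrete $\tau$-orbit, while smallness holds only for $\R$-invariant measures. The key point is that for a flow-invariant $\mu$, the frequency with which the $\tau$-orbit lands in the boundary neighbourhood $N$ equals $\mu(N)$ (by the ergodic theorem for $T^\tau$ applied to $\mu$). Moreover, flow-box structure makes the symbol sequence change only at visits to $N$. This gives entropy at most $H(\epsilon)+\epsilon\log(\#\text{levels})$ plus the contribution of the refinement levels, which is summable with total less than $\delta$ by the choice of the $\epsilon_k$. Controlling the continuous part of $f$ on $B$, where the values range over a Cantor-like continuum, will be done by the staircase self-similarity: each further level costs at most $\epsilon_k\log 2$.

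For part (2), apply (1) with $\delta=1$ to a countable dense set of pairs, or better to pairs in a countable base. This yields factors $\phi_j$ of finite flow-generated entropy. Let $\pi_n=\phi_1\times\cdots\times\phi_n$; then $h_\R(\pi_n)\le\sum_{j\le n}h_\R(\phi_j)<\infty$ by subadditivity of entropy for joinings under each fixed $\mu$. Point separation by all $\phi_j$, together with compactness, gives $(X,\tau)=\varprojlim(W_n,T_n)$. Countably many factors suffice: for each pair of disjoint closed basic sets we can make a single $f$ separate them, which is the same construction with $w,w'$ replaced by small neighbourhoods.
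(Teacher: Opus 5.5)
Your entropy estimate fails, and the failure is substantive. You claim that the sampled sequence $(f(T^{n\tau}x))_n$ changes value only when the $\tau$-orbit visits the thin neighbourhood $N$ of the flow-box boundary. This is false: the discrete orbit simply jumps over slabs of flow-thickness less than $\tau$ around the top and bottom faces. Here is a concrete instance.
\begin{itemize}
\item[] Take the unit-roof suspension of the full $2$-shift, which has SFBP. Let $S$ be the base and $U=[0]$, which is clopen, so there is no side boundary.
\item[] Set $f\equiv1$ on $[0]\times[\delta,\tfrac12-\delta]$, linear on the two $\delta$-slabs, $0$ elsewhere, and take $\tau=1$.
\item[] For points at height $s\in(\delta,\tfrac12-\delta)$ the sampled sequence is exactly $(1_{[0]}(\sigma^nx))_n$, and it never visits $N$.
\item[] For the flow-invariant measure $\mu=\mathrm{Bernoulli}\times\mathrm{Leb}$, the factor entropy is therefore at least $(\tfrac12-2\delta)\log2$, while $\mu(N)=\delta$.
\end{itemize}
Nothing in your construction prevents this, since you shrink the box height only to exclude $w'$.

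Thickening $N$ to flow-width $\tau$ does not rescue the count. The slabs then carry measure of order $\tau\,\ocap_\#(U)$, which is not small; for example, they contain the whole orbit of $w$ when $w$ is periodic with period $\le\tau$. Counting ``box entries $\approx n\tau/\eta$'' helps only if $\eta\gg\tau$, and short periodic orbits rule that out.

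The missing idea is that the whole support $\{f\neq0\}$, not just the interpolation region, must have $\R$-invariant measure $O(\epsilon)$ uniformly in $\mu$. The paper achieves this with a flow box of height $2\epsilon$, using $\mu([-\epsilon,\epsilon]A)\le2\epsilon\,\ocap_\#(A)$ (Lemma~\ref{lem:orbitcap-flowbox}) together with $\ocap_\#(U_0)<\ocap_\#(\{w\})+\epsilon/4$. It keeps continuity by putting the Cantor function in the time direction: $\Theta_\epsilon(tx)=\min\{f_\epsilon(x),1-D(|t|/\epsilon)\}$. Here $f_\epsilon$ is the transverse staircase of Theorem~\ref{ltos}, built from a clean seamless family. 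The seamless conditions and the averaging step are exactly what make your ``Urysohn-type functions in the Cantor staircase manner'' converge to a continuous function in the transverse direction, which your sketch does not address.

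Because $\Theta_\epsilon$ is dyadic-valued off a null set, its time-zero value partition $\alpha$ is a countable generator of the factor. Hence $h_{\pi_*\mu}\le H_\mu(\alpha)$, a single Shannon entropy, and no counting of changes along the $\tau$-orbit is needed. Smallness of $H_\mu(\alpha)$ uses the level-wise bound $\mu(A_{j,p})\le K\epsilon 3^{-j}$ for the $2^{j-1}$ atoms of level $j$; a bound on $\mu(B)$ alone would not suffice.

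In part (2), separating a countable dense set of pairs does not separate all pairs, and the set-version of (1) you invoke is not proved. It is also unnecessary. A factor separating $x\neq x'$ separates a product neighbourhood of $(x,x')$, so compactness of $\{d(x,x')\ge 2^{-n}\}$ gives finitely many factors for each $n$. After that, your product and inverse-limit argument goes through.
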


The proof of Theorem \ref{thm:SFBP_small_entropy_factors}
is based on the construction of dynamical version of the Cantor staircase function (see Definition \ref{defn:dyn_Cantor}). This is a continuous function from the space onto the interval, separating two pre-chosen, distinct points, such that up to measure zero with respect to any flow-invariant measure, takes only dyadic values. Thus, similarly to the classical Cantor staircase function, although it assumes only countably many values from a measure-theoretic perspective, it still forms a continuous path from $0$ to $1$ in its image. Accordingly, one may anticipate that this construction will be useful in future applications.

\medskip

\noindent\textbf{Acknowledgement.}
We thank Ruxi Shi for many insightful discussions. In particular, precursors to Theorems \ref{thm:SFBPvsSBP_tau} and \ref{thm:SFBP_implies_mdim0} for aperiodic flows were first obtained  by Ruxi Shi and the second author. We are  grateful to Michael Levin for suggesting Example~\ref{ex1}. We also thank Wen Huang, Leiye Xu, and Guohua Zhang for helpful discussions related to this work.

\section{Preliminaries}\label{sec:pre}
\subsection{Dynamical systems}
A \textbf{$G$-topological dynamical system} ($G$-t.d.s.\ for short) is a pair $(X,G)$, where $X$ is a compact metrizable space and the group $G$ acts on $X$ by homeomorphisms. The action of an element $g\in G$ on an element $x\in X$ is denoted by $gx$.
In this paper we mainly consider the cases $G=\Z$ and $G=\R$.
We call a $\Z$-t.d.s. simply a \textbf{t.d.s.} and write it as $(X,T)$, where $T$ denotes the generator of the $\Z$--action.
We call an $\R$-t.d.s. a \textbf{topological flow} and write it as $(X,\R)$.

For a flow $(X,\R)$ and $\tau\in\R$, we denote by $(X,\tau)$ the $\Z$-system given by the \textbf{$\tau$-discretization} of the flow, i.e.\ the t.d.s.\ generated by the time-$\tau$ map.  Moreover,  for real numbers $s'>s$ and subset $A$ of $X$, we denote
 \[sA:=\{sa:a\in A\},\]
 and 
 \[[s,s']A:=\bigcup_{t\in [s,s']}tA.\]

Let $(X,G)$ be a $G$-t.d.s. A point $x \in X$ is called a \textbf{fixed-point} if $g(x)=x$ for all $g\in G$, and is a \textbf{periodic point} if there exists $g\in G\setminus\{e\}$ such that $gx=x$. 
When $G=\Z$, for all $n\in\N$,  denote 
\[\per_n(X,T):=\{x\in X: T^n(x)=x\}\text{ and }\per(X,T):=\cup_{n\in\N}\per_n(X,T).\]
The $G$-t.d.s. $(X, G)$ is said to be \textbf{aperiodic} if it has no periodic points.

A continuous surjective map $\psi:(X,G)\to(Y,G)$ between two $G$-t.d.s. is called an \textbf{extension} or \textbf{a factor map} if $\psi(gx)=g\psi(x)$ for  all $g\in G$. We refer to $(X,G)$ as the extension of $(Y,G)$, and $(Y,G)$ as the factor of $(X,G)$.

Given a sequence of $G$-t.d.s.	$\{(X_i,G)\}_{i=1}^\infty$  and factor maps $\pi_{i+1}:X_{i+1}\to X_i$ for $i=1,2, \ldots$,  we define the \textbf{inverse limit}:
		\[\lim
		\limits_{\longleftarrow}(X_i,G)=\{(x_1,x_2,\ldots)\in \prod_{i=1}^\infty X_i:x_i\in X_i\text{ and }\pi_{i+1}(x_i)=x_i\},\]
		with the product topology and the diagonal $G$-action.

Let $(X,G)$ be a $G$-t.d.s.
A Borel probability measure $\mu$ on $X$ is called \textbf{$G$-invariant} if $g_*\mu=\mu$ for every $g\in G$, where
\[
g_*\mu(A):=\mu(g^{-1}A)\qquad \text{for all Borel sets }A\subset X.
\]
In the case  where $G=\R$, these  measures are also referred to as \textbf{flow-invariant}.
We write $\PP_G(X)$ for the set of all $G$-invariant Borel probability measures on $X$.
A measure $\mu\in \PP_G(X)$ is \textbf{ergodic} if every $G$-invariant Borel set $A\subset X$ satisfies $\mu(A)\in\{0,1\}$.
We denote by $\PP_G^e(X)$ the set of all ergodic measures in $\PP_G(X)$.
In the case $G=\Z$, we abbreviate $\PP_T(X):=\PP_\Z(X)$ and $\PP_T^e(X):=\PP_\Z^e(X)$. Whenever a measure $\nu$ is fixed, we implicitly work with the $\nu$-completion of the
Borel $\sigma$-algebra of $X$; in particular, we may consider $\nu(A)$ for  $A$ in this completion.

For any bounded function $f:X\to \R$, we denote
\[\|f\|_\infty:=\sup_{x\in X}|f(x)|.\]

\subsection{Suspension flows}
Now we recall a classical method to obtain a topological flow from a t.d.s. Let $(X, T)$ be a t.d.s., and $f: X \rightarrow \mathbb{R}_{>0}$ be a positive continuous function. 

Consider the quotient space
$$
S_f X := (X\times \R)/\sim,\qquad (x,s+f(x))\sim (Tx,s).
$$
This quotient space $S_f X$ is compact and metrizable. The \textbf{suspension flow} over $(X, T)$ under the roof function $f$ is the $\R$-t.d.s. on $S_f X$ induced by $t(x, s)=(x,s+t)$.

\begin{defn}
  Two topological flows
$(X,\R)$ and $(Y,\R)$ are said to be \textbf{topologically orbit
equivalent} if there exists a homeomorphism $h:X\to Y$ mapping orbits onto orbits, preserving their orientation. 
\end{defn}
The following fact is well-known.

\begin{lem}
\label{lem:orbit_eq}
Let $(X,T)$ be a t.d.s.\ and
$f:X\to \R_{>0}$ a continuous function. Then $(S_f X,\R)$ is topologically orbit
equivalent to $(S_1 X,\R)$,  the suspension flow over $(X,T)$ under the constant roof function $1$.
\end{lem}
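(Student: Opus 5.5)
The plan is to write down an explicit homeomorphism $h:S_fX\to S_1X$ that reparametrizes time linearly on each fiber segment. Define $\tilde h:X\times\R\to X\times\R$ by $\tilde h(x,s)=(x,s/f(x))$. This is a homeomorphism of $X\times\R$, since $f$ is continuous and positive, and its inverse is $(x,u)\mapsto(x,uf(x))$.

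The first step is to check that $\tilde h$ intertwines the two equivalence relations. In $S_fX$ we have $(x,s+f(x))\sim(Tx,s)$. We want the images to be related by $(x,u+1)\sim(Tx,u)$ in $S_1X$. This fails for the naive formula: $\tilde h(x,s+f(x))=(x,s/f(x)+1)\sim (Tx,s/f(x))$, whereas $\tilde h(Tx,s)=(Tx,s/f(Tx))$. These agree only when $s=0$. The fix is to apply the rescaling only on a fundamental domain. Every point of $S_fX$ has a unique representative $(x,s)$ with $0\le s<f(x)$, and we set $h([x,s]_f)=[x,s/f(x)]_1$. A cleaner way to say this is to define, for $s\in\R$, the piecewise-linear map $\phi_x(s)$ that sends $[\sum_{j<n}f(T^jx),\sum_{j\le n}f(T^jx)]$ affinely onto $[n,n+1]$ (with the analogous formula for negative $n$), and then put $\tilde h(x,s)=(x,\phi_x(s))$. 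With this definition the equivariance $\phi_x(s+f(x))=\phi_{Tx}(s)+1$ holds by construction, so $\tilde h$ descends to a well-defined bijection $h:S_fX\to S_1X$.

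The second step is continuity. The map $(x,s)\mapsto\phi_x(s)$ is jointly continuous on $X\times\R$, because the breakpoints $\sum_j f(T^jx)$ depend continuously on $x$ and the formula is affine between consecutive breakpoints. Hence $\tilde h$ is continuous. Its inverse, built from the inverse piecewise-linear maps, is continuous as well, so $\tilde h$ is a homeomorphism commuting with the identifications. Passing to the quotients gives a continuous bijection $h$ between compact Hausdorff spaces, which is therefore a homeomorphism.

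Finally, the orbit of $[x,s]_f$ is $\{[x,s+t]_f : t\in\R\}$, and $h$ maps it to $\{[x,\phi_x(s+t)]_1 : t\in\R\}$. Since $\phi_x$ is an increasing bijection of $\R$, this image is exactly the orbit of $[x,\phi_x(s)]_1$, traversed with the same orientation. So $h$ is a topological orbit equivalence.

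The main obstacle is the well-definedness at the seams $s=\sum_j f(T^jx)$, that is, verifying the cocycle identity for $\phi$. This reduces to the telescoping identity for the Birkhoff sums $\sum_{j<n}f(T^{j+1}x)=\sum_{j<n+1}f(T^jx)-f(x)$.
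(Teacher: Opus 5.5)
Your argument is correct. The paper does not prove this lemma; it states it as a well-known fact. So there is no proof to compare with, and your piecewise-linear reparametrization $\phi_x$, with breakpoints at the Birkhoff sums of $f$, is the standard argument and fills the gap.

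In a write-up, make explicit three points you use tacitly:
\begin{enumerate}
\item $T$ is a homeomorphism. This is needed for the breakpoints with negative index and for the uniqueness of the representative with $0\le s<f(x)$.
\item $\min_X f>0$ by compactness. This guarantees that the breakpoints tend to $\pm\infty$, so that $\phi_x$ really is an increasing bijection of $\R$.
\item The identity $\phi_x(s+f(x))=\phi_{Tx}(s)+1$ is not literally ``by construction''. It follows from the cocycle identity you cite at the end, together with the fact that the $n$-th interval for $Tx$ and the $(n+1)$-st interval for $x$ have the same length $f(T^{n+1}x)$.
\end{enumerate}
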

\subsection{Cross-sections and flow-boxes}\label{sec:flow-box}
Let $(X,\R)$ be a topological flow.
 	A \textbf{cross-section with injectivity time $\eta>0$} is a subset $S\subset X$ such that the map \begin{equation}\label{eq:2026311626}
 	   [-\eta,\eta]\times S\to X,\ (t,x)\mapsto tx
 	\end{equation} 
    is injective.
 In this case, we set $L_\eta:=[-\eta,\eta](S)$ and call $L_\eta$ the   \textbf{flow-box associated to $S$}. In particular, for any $ z \in L_\eta $, there exists a unique pair $ (t, x) \in [-\eta, \eta] \times S $ such that $ z = tx $. Hence we may identify $L_\eta$ with
$S\times[-\eta,\eta]$ via \eqref{eq:2026311626}.
    
   A cross-section is said to be \textbf{global} if there exists $\xi>0$ such that $$[-\xi,\xi]S=X.$$ 
The existence of global cross-sections for fixed-point free flows is well known (see  \cite[Lemma~7]{bowen1972expansive} and  \cite[Lemma~5.1]{gutman2024strongly}).
 \begin{prop}\label{prop:existence of complete cross-section}
     Let $\xp$ be a fixed-point free topological flow. Then it admits a
 global closed cross-section.
 \end{prop}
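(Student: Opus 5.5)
The plan is the classical local-section argument of Bowen--Walters and Whitney, followed by a compactness argument. It has three steps: build small closed transversal discs through every point, cover $X$ by finitely many of the resulting flow-boxes, and merge the finitely many discs into one closed set that is still injective on a short time interval.

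\textbf{Local sections.} Fix $x\in X$. Since $x$ is not a fixed point, we can pick $\varepsilon>0$ with $\varepsilon x\neq x$, and a continuous $g:X\to\R$ with $g(sx)$ not constant in $s$ near $0$. Put
\[
F(y)=\int_0^{\varepsilon} g(sy)\,ds .
\]
Then $F(ty)$ is differentiable in $t$, with $\frac{d}{dt}F(ty)=g((t+\varepsilon)y)-g(ty)$. Choose $g$ so that $g(\varepsilon x)-g(x)\neq 0$, say $g(\varepsilon x)=1$ and $g(x)=0$ by Urysohn. By continuity this derivative is $>c>0$ for $y$ in a closed neighbourhood $U$ of $x$ and $|t|\le \eta_x$. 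Hence
\[
S_x:=\{y\in U: F(y)=F(x)\}
\]
is closed. Each orbit segment $[-\eta_x,\eta_x]y$ with $y\in U$ meets the level set at most once, which gives injectivity time $\eta_x$. Shrinking $U$ and using the intermediate value theorem, $[-\eta_x,\eta_x]S_x$ contains an open neighbourhood $V_x$ of $x$.

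\textbf{Compactness and merging.} By compactness, $X=V_{x_1}\cup\dots\cup V_{x_k}$. The union $S'=\bigcup S_{x_i}$ is closed and satisfies $[-\xi,\xi]S'=X$ with $\xi=\max_i \eta_{x_i}$. However, $S'$ need not have positive injectivity time, because different discs may lie on the same orbit at nearby times. To fix this, process the discs inductively. Given a closed section $S_{(j)}$ with injectivity time $\eta_j$, replace $S_{x_{j+1}}$ by a closed subset of it lying outside the open tube $(-\eta',\eta')S_{(j)}$, where $\eta'\ll\eta_j$. Then flow the removed part slightly, or simply drop it, since it is already within bounded time of $S_{(j)}$. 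The union has positive injectivity time, since the two closed pieces are at positive ``time distance'' on the relevant compact set, and it still covers $X$ with a slightly larger $\xi$.

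\textbf{Main obstacle.} The hard part is the merging step, because removing the open tube may destroy closedness near its boundary or leave two points at small flow-time separation. I would handle it by working with strictly nested compact sub-discs $S'_{x_i}\subset \operatorname{int} S_{x_i}$ still covering, as in Bowen's Lemma~7. Alternatively, since the statement is cited from \cite[Lemma~7]{bowen1972expansive} and \cite[Lemma~5.1]{gutman2024strongly}, I would simply invoke those results.
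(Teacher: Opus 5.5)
The paper gives no proof of its own here. It only cites \cite[Lemma~7]{bowen1972expansive} and \cite[Lemma~5.1]{gutman2024strongly}, so your closing fallback is exactly what the paper does. Your local step is the standard Whitney construction and is fine up to a factor of two. Monotonicity of $t\mapsto F(ty)$ on $[-\eta_x,\eta_x]$ gives injectivity time only $\eta_x/2$, since $ty=t'y'$ with $y,y'\in S_x$ means $y'=(t-t')y$ with $|t-t'|\le 2\eta$. As a self-contained proof, however, the merging step has a genuine gap, and the nested sub-discs do not close it.

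First, $(-\eta',\eta')S_{(j)}$ is not open. It contains the lateral part $(-\eta',\eta')\pp S_{(j)}$ of its boundary, so $S_{x_{j+1}}\setminus(-\eta',\eta')S_{(j)}$ need not be closed. Its closure can contain points of $S_{x_{j+1}}\cap(-\eta',\eta')\pp S_{(j)}$, which lie at arbitrarily small positive flow-time from $S_{(j)}$. Removing the open set $(-\eta',\eta')\ip S_{(j)}$ instead gives a closed set. But it can still meet $(-\eta',\eta')\pp S_{(j)}$, for instance when the disc crosses the lateral boundary of the flow-box of $S_{(j)}$. Then the two pieces are not disjoint, and your compactness argument for a positive ``time distance'' has nothing to act on.

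More fundamentally, any procedure whose output is a closed subset of the union of the original discs can fail. Work in flow-box coordinates $(u,t)$ with $u=(u_1,u_2)\in[-1,2]\times[-1,1]$. Let $S=\{t=0,\ u_1\le 1\}$ and $T=\{t=\kappa u_2,\ u_1\ge 0\}$ with $\kappa>0$: two discs crossing transversally, each ending inside the other's box. Assume the orbit segments here meet no other section material within the relevant time. Suppose closed $S_1\subset S$ and $T_1\subset T$ together meet every segment. Let $A$ (resp.\ $B$) be the set of $u$ whose segment meets $S_1$ (resp.\ $T_1$). Then $A,B$ are closed, cover the rectangle, $A\subset\{u_1\le 1\}$ and $B\subset\{u_1\ge 0\}$. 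Hence every connected segment $\{u_2=c\}$ meets $A\cap B$. Taking $c>0$ small gives distinct points $(u,0)\in S_1$ and $(u,\kappa c)\in T_1$ on one orbit at flow-time $\kappa c$. So $S_1\cup T_1$ has no positive injectivity time.

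A correct merging must create transversal material not contained in the original discs. Examples are interpolating, in the overlap, between the two flow-box time coordinates with a cutoff function, or translating discs along the flow. That work is exactly what you would be importing from the cited lemmas.
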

 \subsection{Flow boundaries and interiors}
Let $(X,\R)$ be a topological flow.
  \begin{defn}
Let $V\subset S \subset X$. Denote by $\partial_S V$ and $\Int_S V$ respectively the boundary and interior of $V$ with respect to the subspace topology induced by $S$.
 \end{defn}

 \begin{defn}{(\cite[Definition 2.2]{burguet2019symbolic})}
 Let $U$ be a set contained in a closed cross-section $S$ with injectivity time $\eta$. The \textbf{flow interior} $\ip(U)\subset U$ is defined as
 \[\ip U:=\mathrm{Int}([-\eta,\eta]U)\cap U.\]
 The \textbf{flow boundary} $\pp U$ of $U$ is defined as 
  \[\pp U:=\overline{U}\setminus \ip U.\]
 \end{defn}
\begin{rem}\label{rem:equiv def. of flow b}
Let $S$ be a closed cross-section with injectivity time $\eta$, and let
$U\subset S$. By \cite[Definition~2.7]{gutman2024strongly}, the flow interior
$\ip U\subset U$ is the unique set satisfying
\[
(-\eta,\eta)(\ip U)=\Int([-\eta,\eta]U).
\]
Similarly\footnote{This corrects a typo in
\cite[Definition~2.7]{gutman2024strongly}.}, the flow boundary $\pp U$ is the unique set satisfying
\[
\partial([-\eta,\eta]U)
=
(-\eta\,\overline U)\ \sqcup\ (\eta\,\overline U)\ \sqcup\ ((-\eta,\eta)\pp U).
\]
\end{rem}

\begin{lem}{(\cite[Lemma 2.3]{burguet2019symbolic})}
\label{lem:flow boudary}
     Let $U$ and $V$ be two subsets of a closed cross-section. Then 
     \begin{enumerate}
         \item $\pp(U\cup V)\subset \pp U\cup\pp V;$
         \item $\ip U\cup\ip V\subset  \ip(U\cup V)$.
     \end{enumerate}
\end{lem}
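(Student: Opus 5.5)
Both parts follow from one monotonicity property of flow boxes: the box generated by a union is the union of the boxes. Write $B(A):=[-\eta,\eta]A$ for $A\subset S$, where $S$ is the closed cross-section with injectivity time $\eta$ containing $U$ and $V$. Then
\[
B(U\cup V)=B(U)\cup B(V),
\]
and since interiors are monotone under inclusion, $\Int B(U)\subset \Int B(U\cup V)$ and likewise for $V$. I will prove (2) first and then derive (1) from it using closures.

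For (2), take $x\in \ip U$. By definition $x\in U\subset U\cup V$ and $x\in \Int B(U)\subset \Int B(U\cup V)$. Hence $x\in \Int B(U\cup V)\cap (U\cup V)=\ip(U\cup V)$. The same argument applies to $V$, which gives $\ip U\cup \ip V\subset \ip(U\cup V)$.

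For (1), take $x\in \pp(U\cup V)$. Then $x\in \overline{U\cup V}=\overline U\cup\overline V$ and $x\notin \ip(U\cup V)$. Suppose, say, $x\in\overline U$. By (2), $x\notin \ip U$, so $x\in \overline U\setminus \ip U=\pp U$. The case $x\in\overline V$ gives $x\in\pp V$ in the same way. Therefore $\pp(U\cup V)\subset \pp U\cup\pp V$.

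There is no real obstacle here. The one point to check is that $U$, $V$ and $U\cup V$ all use the same cross-section $S$ and the same injectivity time $\eta$, so that $\ip$ and $\pp$ are defined with the same $\eta$ throughout; this is part of the hypothesis. Closure commutes with finite unions, so the closure step in (1) is valid.
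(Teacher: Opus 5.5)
Your proof is correct: monotonicity of $A\mapsto\Int([-\eta,\eta]A)$ gives $\ip U\cup\ip V\subset\ip(U\cup V)$, and (1) then follows from $\overline{U\cup V}=\overline U\cup\overline V$, as you say. The paper gives no proof of its own and simply cites Burguet's Lemma 2.3; your direct argument from the definitions, with the same $\eta$ used for $U$, $V$ and $U\cup V$, is the standard one.
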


The following results \cite[Proposition 2.15 \& Remark 2.16]{gutman2024strongly} allow us, in many situations, to replace the flow boundary and flow interior
by the ordinary boundary and interior in the subspace topology of a cross-section, which
significantly simplifies several arguments.
\begin{prop}\label{prop:subspace-vs-flow-boundary}
    Let $(X, \R)$ be a topological flow. Let $U$ be a subset of a closed cross-section $S$ such that $\overline{U} \cap \pp S=\emptyset$. Then
$$
\pp U=\partial_S U \text { and } \ip U=\Int_S U.
$$
\end{prop}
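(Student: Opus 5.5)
We reduce both equalities to the description of $\Int([-\eta,\eta]U)$ given in Remark~\ref{rem:equiv def. of flow b}. The key extra input is that $\overline U$ stays away from $\pp S$. Since $\ip S=\Int([-\eta,\eta]S)\cap S$ and $\overline U\subset S$ (as $S$ is closed), the hypothesis $\overline U\cap\pp S=\emptyset$ says exactly that every point of $\overline U$ lies in $\Int([-\eta,\eta]S)$. Equivalently, $(-\eta,\eta)\overline U\subset\Int([-\eta,\eta]S)$, using that the interior is a union of flow segments by the Remark applied to $S$. Near $\overline U$, the flow-box $[-\eta,\eta]S$ is therefore an honest neighbourhood. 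Via the homeomorphism $(t,x)\mapsto tx$, which is continuous and injective from the compact set $[-\eta,\eta]\times S$, hence a homeomorphism onto $L_\eta$, this neighbourhood looks like a product $(-\eta,\eta)\times S$ locally.

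\textbf{Step 1: $\ip U=\Int_S U$.}
\begin{itemize}
\item[($\subset$)] If $x\in\ip U$, then some open $W\ni x$ in $X$ satisfies $W\subset[-\eta,\eta]U$. Then $W\cap S$ is relatively open in $S$, and by injectivity every point of $W\cap S\subset[-\eta,\eta]U$ written as $tu$ with $t\in[-\eta,\eta]$ must have $t=0$, since both the point and $u$ lie in $S$. Hence $W\cap S\subset U$, so $x\in\Int_S U$.
\item[($\supset$)] If $x\in\Int_S U$, choose $V\subset U$ relatively open in $S$ containing $x$. Since $x\in\Int([-\eta,\eta]S)$, take an open $O\ni x$ in $X$ with $O\subset(-\eta,\eta)S$. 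Then $O\cap(-\eta,\eta)V$ is open in $X$. Indeed, $(-\eta,\eta)V$ is open in $L_\eta$ via the product homeomorphism, so it equals $L_\eta\cap O'$ for some open $O'$ in $X$, and then $O\cap(-\eta,\eta)V=O\cap O'$ because $O\subset L_\eta$. This set contains $x$ and lies in $[-\eta,\eta]U$, so $x\in\ip U$.
\end{itemize}

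\textbf{Step 2: $\pp U=\partial_S U$.} Because $S$ is closed, the closure of $U$ in $S$ equals $\overline U$. Then
\[
\partial_S U=\overline U\setminus\Int_S U=\overline U\setminus\ip U=\pp U,
\]
by Step 1.

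The main obstacle is the openness claim in ($\supset$). One must pass carefully from relative openness in $L_\eta$ to openness in $X$. This is exactly where the hypothesis that $\overline U$ avoids $\pp S$ is used: it supplies the open set $O\subset L_\eta$ around $x$. Without it, points of $U$ near the flow boundary of $S$ could fail to have full neighbourhoods inside the flow-box.
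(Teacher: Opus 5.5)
Your proof is correct. The paper gives no argument of its own here; it imports the statement from \cite[Proposition 2.15 \& Remark 2.16]{gutman2024strongly}, so your write-up is a self-contained replacement for that citation. Your argument runs in three steps. First, $\ip U\subset\Int_S U$ needs only injectivity of $(t,u)\mapsto tu$ on $[-\eta,\eta]\times S$, and holds with no hypothesis. Second, $\Int_S U\subset\ip U$ uses the homeomorphism $[-\eta,\eta]\times S\cong L_\eta$, which follows from compactness, together with an open set $O\ni x$ inside $L_\eta$; this is where $\overline U\cap\pp S=\emptyset$ enters. Third, the boundary identity is formal because $S$ is closed.

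Two small remarks. You do not actually need Remark~\ref{rem:equiv def. of flow b} to produce $O$. The set $O:=\Int([-\eta,\eta]S)$ is open, contains $x$, and is trivially contained in $L_\eta$, and that is all your openness step uses. Also, the hypothesis is invoked only at points of $\Int_S U$, so your argument proves the statement under the weaker assumption $U\cap\pp S=\emptyset$, or even $\Int_S U\cap\pp S=\emptyset$. The equality of boundaries is equivalent to the equality of interiors, since each boundary is $\overline U$ minus the corresponding interior.

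The citation buys the paper brevity. Your route gives a short elementary proof that shows exactly where the hypothesis is used.
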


\subsection{Topological and measure-theoretic entropy}
Let $(X,T)$ be a t.d.s.\ and let $\rho$ be a continuous metric on $X$.
For $n\in\N$, set $$\rho_n(x,y):=\max_{0\le k\le n-1}\rho(T^k x,T^k y).$$
Given $n\in\N$ and $\epsilon>0$, let $\Sep_\rho(n,\epsilon)$ be the maximal cardinality of an $(n,\epsilon,\rho)$-separated set, and define
\[
h_\rho(T,\epsilon):=\limsup_{n\to\infty}\frac1n\log \Sep_\rho(n,\epsilon).
\]
Define  \textbf{topological entropy} of the t.d.s. $(X,T)$ by
\[\mathrm{h_{top}}(X,T):=\lim_{\epsilon\to0}\mathrm{h_{\rho}}(T,\epsilon).\]

Let $\mu\in\mc \PP_T(X)$, and $\alpha=\{A_1,\dots,A_k\}$ be a finite measurable partition of $X$.
Define its \textbf{(Shannon) entropy} by
\[
H_\mu(\alpha)
:=
-\sum_{i=1}^k \mu(A_i)\log \mu(A_i).
\]
Define its \textbf{(dynamical) entropy} by
\[
\mathrm{h_{\mu}}(T,\alpha)
:=\lim_{n\to\infty}\frac{1}{n}H_\mu(\alpha_0^{n-1}),
\]
and the  \textbf{(measure-theoretic) entropy}  of
$T$ by
\[
\mathrm{h_{\mu}}(T):=\sup_{\alpha} \mathrm{h_{\mu}}(T,\alpha),
\]
where the supremum is over all finite measurable partitions $\alpha$ of $X$.

By the variational principle \cite[Theorem 8.6]{W82},
\[
\mathrm{h_{top}}(X,T)=\sup_{\mu\in\mathcal \PP_T(X)} \mathrm{h_{\mu}}(T)
= \sup_{\mu\in\mathcal \PP_T^e(X)} \mathrm{h_{\mu}}(T).
\]

Given a topological flow  $(X,\R)$,  it follows from \cite[Proposition 21]{B71}  that for  all $\tau\in\mathbb{R},$
\[\mathrm{h_{top}}(X,\tau):=|\tau|\cdot \mathrm{h_{top}}(X,1). \]
Thus, we define the topological entropy of the topological flow by
\[\mathrm{h_{top}}(X,\R)=\mathrm{h_{top}}(X,1).\]

\subsection{Lebesgue covering dimension and mean dimension}
 Let $(X,d)$ be a compact metric space, $\epsilon>0$, and $Y$ be a topological space. Denoted by $\dim (X)$ the \textbf{Lebesgue covering dimension} of $X$. A continuous map $f:X\to Y$ is called an $\epsilon$-embedding if for  all $x_1,x_2\in X$ with $f(x_1)=f(x_2)$, we have $d(x_1,x_2)<\epsilon$. We define 
\[\mathrm{Widim}_\epsilon(X,d)=\min_{K\in\mathcal{K}}\mathrm{dim}(K).\]
where $\mathcal{K}$ denotes the collection of compact metrizable spaces $K$ satisfying that there exists an $\epsilon$-embedding $f:X\to K$.

Following \cite[Theorem 2.6]{LW},  we define the \textbf{mean dimension } of a t.d.s. $(X,T)$ by 
    \[\mathrm{mdim}(X,T):=\lim_{\epsilon\to0}\lim_{N\to\infty}\frac{\mathrm{Widim}_\epsilon(X,d_N)}{N}.\]

Recently, Gutman and Jin   \cite{GutmanJin2020} introduced the \textbf{mean dimension} of a flow $(X,\R)$ by 
    \[\mathrm{mdim}(X,\R):=\lim_{\epsilon\to0}\lim_{N\to\infty}\frac{\mathrm{Widim}_\epsilon(X,d_N)}{N},\]
where $d_N(x,y):=\max_{t\in[0,N]}d(tx,ty)$.

The following result is obtained by combining Proposition 2.5 and Proposition 2.7 in \cite{GutmanJin2020}.
\begin{prop}\label{prop:mdim time-1}
Let $(X,\R)$ be a topological flow. Then 
\[\mathrm{mdim}(X,\tau)=|\tau|\cdot\mathrm{mdim}(X,\R).\]
\end{prop}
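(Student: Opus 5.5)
The statement to prove is Proposition~\ref{prop:mdim time-1}: $\mdim(X,\tau)=|\tau|\cdot\mdim(X,\R)$. The plan is to compare the Bowen-type metrics for the flow and for the discretization, and then use monotonicity and scaling of $\mathrm{Widim}_\epsilon$.

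First, the trivial cases. For $\tau=0$ the map is the identity. Then $d_N=d$ for every $N$, so $\mathrm{Widim}_\epsilon(X,d_N)/N\to0$ and both sides vanish. For $\tau<0$, note that $\mathrm{Widim}_\epsilon$ of $(X,T^{-1})$ with metric $\max_{0\le k<N}d(T^{-k}x,T^{-k}y)$ equals that of $(X,T)$ with the same $N$. Indeed, $T^{N-1}$ is an isometry between the two metrics, and $\epsilon$-embeddings transfer along it. So I may assume $\tau>0$.

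\textbf{Step 1 (comparing metrics).} Write $d^\tau_N(x,y)=\max_{0\le k<N}d(k\tau x,k\tau y)$ and $d^\R_s(x,y)=\max_{t\in[0,s]}d(tx,ty)$. Clearly $d^\tau_N\le d^\R_{(N-1)\tau}$. So any $\epsilon$-embedding for $d^\tau_N$ is one for $d^\R_{(N-1)\tau}$, which gives
\[
\mathrm{Widim}_\epsilon(X,d^\R_{(N-1)\tau})\le\mathrm{Widim}_\epsilon(X,d^\tau_N).
\]
Dividing by $N$ and letting $N\to\infty$ and then $\epsilon\to0$ yields $\tau\,\mdim(X,\R)\le\mdim(X,\tau)$. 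Here I use that the flow limit over real $N$ may be taken along the sequence $(N-1)\tau$; this is justified by the subadditivity in $N$ that makes the limit exist.

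\textbf{Step 2 (reverse inequality).} By uniform continuity of $(t,x)\mapsto tx$ on $[0,\tau]\times X$, for each $\epsilon>0$ there is $\delta>0$ such that $d(x,y)<\delta$ implies $d(tx,ty)<\epsilon$ for all $t\in[0,\tau]$. Hence $d^\tau_{N+1}(x,y)<\delta$ implies $d^\R_{N\tau}(x,y)<\epsilon$. The issue is that $\epsilon$-embeddings are defined by the condition $d<\epsilon$ on fibers, so I need a version of $\mathrm{Widim}$ that is compatible with this uniform comparison. It suffices to note that if $f$ is a $\delta$-embedding for $d^\tau_{N+1}$, then any two points in the same fiber are $d^\tau_{N+1}$-close, hence $d^\R_{N\tau}$-close. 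Thus $f$ is an $\epsilon$-embedding for $d^\R_{N\tau}$, giving
\[
\mathrm{Widim}_\epsilon(X,d^\R_{N\tau})\le\mathrm{Widim}_\delta(X,d^\tau_{N+1}).
\]
This is the wrong direction for bounding the discrete quantity, so I reverse the roles: I need that the discrete metric is controlled by the flow metric. That direction is the trivial inequality $d^\tau_{N}\le d^\R_{N\tau}$. It gives
\[
\mathrm{Widim}_\epsilon(X,d^\tau_N)\le\mathrm{Widim}_\epsilon(X,d^\R_{N\tau}),
\]
hence $\mdim(X,\tau)\le\tau\,\mdim(X,\R)$. So in fact Step 1 is the uniform-continuity direction. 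There I should write $d^\R_{(N-1)\tau}\le\epsilon$ whenever $d^\tau_N<\delta$, which gives
\[
\mathrm{Widim}_\epsilon(X,d^\R_{(N-1)\tau})\le \mathrm{Widim}_\delta(X,d^\tau_N).
\]
Then I take the limit in $N$, then $\delta\to0$, and then $\epsilon\to0$.

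\textbf{Main obstacle.} The delicate point is the limit bookkeeping. One must check that the limits in $N$ exist, via subadditivity of $N\mapsto\mathrm{Widim}_\epsilon(X,d_N)$ for both the discrete and the flow metrics. One must also check that the flow limit along real $s\to\infty$ agrees with the limit along $s=N\tau$. Since $s\mapsto\mathrm{Widim}_\epsilon(X,d^\R_s)$ is monotone, sandwiching $s$ between consecutive multiples of $\tau$ handles the second point. Finally, the double limit in $\epsilon,\delta$ is justified because $\epsilon\mapsto\lim_N$ is monotone.
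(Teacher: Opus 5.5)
Your final argument is correct and uses the same metric comparison that underlies the Gutman--Jin results the paper cites in place of a proof. The trivial bound $d^\tau_N\le d^\R_{N\tau}$ gives $\mdim(X,\tau)\le\tau\,\mdim(X,\R)$. Uniform continuity of the flow on $[0,\tau]\times X$ gives $\mathrm{Widim}_\epsilon(X,d^\R_{(N-1)\tau})\le\mathrm{Widim}_\delta(X,d^\tau_N)$ and hence the reverse inequality. You get both from one direct comparison of $d^\tau_N$ with $d^\R_{N\tau}$ for general $\tau$, where the paper combines two separate cited propositions.

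When you write it up, delete the first version of Step 1, which is backwards. From $d^\tau_N\le d^\R_{(N-1)\tau}$, an $\epsilon$-embedding transfers from the larger metric $d^\R_{(N-1)\tau}$ to the smaller one $d^\tau_N$, not conversely. So that inequality alone only yields the upper bound on $\mdim(X,\tau)$.
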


\subsection{The marker property}\label{sec:marker}
\begin{defn}\label{defn:marker}
Let $(X,T)$ be a t.d.s. We say that $(X,T)$ has the \textbf{marker property} if for every finite set $F\subset \Z\setminus\{0\}$ there exists an open set $U\subset X$ such that
\[
X=\bigcup_{n\in\Z}T^nU
\quad\text{and}\quad
U\cap T^nU=\emptyset\ \ \text{for all }n\in F.
\]
\end{defn}

\begin{lem}\label{lem:important observation}
Let $(X,T)$ be a t.d.s., and let $(S_1X,\R)$ be the suspension flow over $(X,T)$ under the constant roof function $1$.
Then for every $\tau\in\R\setminus\Q$, the $\tau$-discretization $(S_1X,\tau)$ has the marker property.
\end{lem}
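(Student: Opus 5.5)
We have to prove Lemma \ref{lem:important observation}: for irrational $\tau$, the time-$\tau$ map on $S_1X$ has the marker property. The idea is to build a marker set from the circle coordinate alone. The map $p:S_1X\to \T=\R/\Z$, $p([x,s])=s \bmod 1$, is well defined and continuous, because the identification $(x,s+1)\sim(Tx,s)$ preserves $s \bmod 1$. It also intertwines the flow with rotation: $p(t z)=p(z)+t$. Hence $p$ is a factor map from $(S_1X,\tau)$ onto the irrational rotation $(\T,R_\tau)$, $R_\tau(\theta)=\theta+\tau$.

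So it suffices to prove the marker property for $(\T,R_\tau)$ and pull it back. Suppose $V\subset \T$ is open with $\bigcup_n R_\tau^n V=\T$ and $V\cap R_\tau^nV=\emptyset$ for $n\in F$. Then $U:=p^{-1}(V)$ is open, and $T_\tau^nU=p^{-1}(R_\tau^nV)$ because $p$ is equivariant and surjective. Therefore $\bigcup_n T_\tau^nU=p^{-1}(\T)=S_1X$, and $U\cap T_\tau^nU=p^{-1}(V\cap R_\tau^nV)=\emptyset$ for $n\in F$.

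For the rotation, fix a finite $F\subset\Z\setminus\{0\}$. Since $\tau$ is irrational, $n\tau\notin\Z$ for every $n\in F$. Set
\[
\epsilon:=\min_{n\in F}\|n\tau\|_{\T}>0
\]
and let $V$ be an open arc of length less than $\epsilon$. For $n\in F$, the arc $R_\tau^nV$ is a translate of $V$ by a distance at least $\epsilon$ (measured in $\T$), which exceeds the length of $V$, so it is disjoint from $V$.

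Coverage follows from minimality of the irrational rotation. The orbit $\{-n\tau\}$ is dense, so for each $\theta\in\T$ some $\theta-n\tau$ lies in $V$, that is, $\theta\in R_\tau^nV$. Thus $\bigcup_n R_\tau^nV=\T$.

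There is no real obstacle. The only point needing care is checking that $p$ is well defined and continuous on the quotient; this holds because $p$ is induced from the continuous map $(x,s)\mapsto s \bmod 1$, which is constant on equivalence classes.
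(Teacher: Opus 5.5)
Your proof is correct and follows the same route as the paper: you project $S_1X$ onto the circle coordinate to get a factor map onto the irrational rotation $(\T^1,R_\tau)$, obtain a marker set there, and pull it back by taking preimages. The only difference is that you verify explicitly the two facts the paper just cites: the rotation has the marker property (your arc of length below $\min_{n\in F}\|n\tau\|$ plus minimality), and the marker property lifts through factor maps.
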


\begin{proof}
Represent points of $S_1X$ as equivalence classes $(x,t)$ with $t\in[0,1)$.
For $\tau\in\R$ one has
\[
\tau((x,t))
=\bigl(T^{\lfloor t+\tau\rfloor}x,\; t+\tau-\lfloor t+\tau\rfloor\bigr).
\]
Let $R_\tau:\T^1\to\T^1$ be the rotation $s\mapsto s+\tau$ and consider the projection
\[
\pi_1:S_1X\to\T^1,\qquad \pi_1((x,t))=t.
\]
A direct computation shows $\pi_1\circ\tau=R_\tau\circ\pi_1$, so $\pi_1$ is a factor map from $(S_1X,\tau)$ onto $(\T^1,R_\tau)$.

Assume $\tau\notin\Q$. Then $(\T^1,R_\tau)$ is minimal, and 
hence $(\T^1,R_\tau)$ has the marker property.
Finally, since the marker property is stable under extensions, $(S_1X,\tau)$ has the marker property.
\end{proof}

\begin{prop}\label{prop:time-dependence-mp}
There exists an aperiodic topological flow $(X,\R)$ with entropy zero
and two times $t_1\neq t_2$ such that $(X,t_1)$ has the marker property while $(X,t_2)$ does not.
\end{prop}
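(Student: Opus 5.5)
We take $(X,\R)$ to be the suspension flow $(S_1Y,\R)$ under the constant roof function $1$ over a suitable base system $(Y,T)$. By Lemma~\ref{lem:important observation}, every irrational time $t_1\in\R\setminus\Q$ gives a $t_1$-discretization with the marker property. So it suffices to choose $(Y,T)$ aperiodic and of entropy zero, such that for some rational time, say $t_2=1$, the system $(S_1Y,1)$ fails the marker property.

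For the base we use the standard fact that the marker property implies aperiodicity. Note that $(S_1Y,1)$ is isomorphic to $(Y\times[0,1),\,T\times\mathrm{id})$, suitably glued; more precisely, $S_1Y$ fibres over $\T^1$ and the time-$1$ map preserves each fibre $Y\times\{t\}$, acting there as $T$. Aperiodicity of the flow requires $T$ to be aperiodic. Then every fibre is an aperiodic copy of $(Y,T)$, so the time-$1$ map is aperiodic as well, and the naive obstruction to the marker property disappears.

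We therefore need a subtler obstruction. We will use the known phenomenon that the marker property fails for some aperiodic zero-entropy systems which are built as products with an infinite-dimensional trivial factor. If $(Z,T)$ has the marker property, then for every $F$ the marker set $U$ is a single open set working uniformly across the whole space. We choose $(Y,T)$ to be an aperiodic zero-entropy system for which $(Y\times[0,1],T\times \mathrm{id})$ fails the marker property. An example of the type suggested by Levin (Example~\ref{ex1}) would work here: a family of periodic-like orbits whose periods tend to infinity, parametrized continuously by an interval. The key step is to prove that the time-$1$ map of $S_1Y$ contains, near each fibre, a closed invariant subsystem conjugate to $(Y\times J, T\times\mathrm{id})$ for a small interval $J$. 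We then show that the marker property passes to closed invariant subsystems, since restricting the open set $U$ is enough. This yields the failure of the marker property for $t_2=1$.

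The main obstacle is building the base $(Y,T)$ so that both requirements hold at once. Entropy zero and aperiodicity are routine, for instance by taking an inverse limit of odometer-like or rotation-like systems. The hard part is making the marker property fail for $T\times\mathrm{id}_{[0,1]}$. To achieve this, we would try a cohomological or dimension argument: marker sets for $F=\{1,\dots,N\}$ give continuous maps $Y\times[0,1]\to$ a nerve which is incompatible with orbits of long but bounded period accumulating. We will use the quantitative characterization that the marker property implies that $\{x:T^nx\neq x\}$ admits uniformly bounded return-time markers. We then arrange, for each $N$, orbits that are $\epsilon$-periodic with period less than $N$ in a continuous family, which forces $U\cap T^nU\neq\emptyset$ for some $n\in F$. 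Verifying that such approximately periodic families survive in the suspension, while the flow itself stays aperiodic with entropy zero, is where the real work lies.
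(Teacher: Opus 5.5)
Your framework is the same as the paper's: the unit-roof suspension, irrational $t_1$ handled by Lemma~\ref{lem:important observation}, $t_2=1$, and failure of the marker property detected on a closed invariant subsystem. However, you stop short of the easy reduction. The fibre $\{(y,0):y\in Y\}$ is itself closed and invariant under the time-$1$ map, and $y\mapsto (y,0)$ conjugates $(Y,T)$ to that map restricted to it. The marker property passes to closed invariant subsystems, as you note yourself. Hence $(S_1Y,1)$ fails the marker property as soon as $(Y,T)$ does. Aperiodicity of the time-$1$ map is irrelevant, so no ``subtler obstruction'' is needed. Your condition on $(Y\times[0,1],T\times\id)$ is in fact equivalent to $(Y,T)$ failing the marker property: a marker $U$ for $T$ yields the marker $U\times[0,1]$, and $Y\times\{0\}$ is a subsystem. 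For zero entropy of the flow, also record that $(y,s)\mapsto[(y,s)]$ is a factor map from $(Y\times[0,1],T\times\id)$ onto $(S_1Y,1)$, so $h_{\mathrm{top}}(S_1Y,1)\le h_{\mathrm{top}}(Y,T)=0$.

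The genuine gap is therefore all of the remaining content: you need an aperiodic zero-entropy t.d.s.\ without the marker property, and you neither prove nor cite one. You leave it as ``where the real work lies.'' This is a deep recent result, which the paper takes from Dranishnikov--Levin \cite{DL25}. Your sketched route would not produce it.
\begin{enumerate}
\item Example~\ref{ex1} is unrelated: it is a suspension over the identity on a Cantor set, in which every orbit is periodic.
\item Approximate periodicity is no obstruction. For an irrational rotation and any $\epsilon>0$, there is $n$ such that every point is $\epsilon$-periodic with period $n$. Yet the rotation is minimal and aperiodic, hence has the marker property. The condition $U\cap T^nU=\emptyset$ concerns only exact returns, and $\partial U$ can separate $x$ from $T^nx$ however close they are.
\item Building the base as an inverse limit of odometer- or rotation-like systems is likely self-defeating. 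If the result factors onto an odometer or an irrational rotation, it has the marker property, since that property lifts through extensions (exactly as used in Lemma~\ref{lem:important observation}).
\end{enumerate}
Once \cite{DL25} supplies the base $(Y,T)$, your argument closes in two lines, as in the paper.
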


\begin{proof}
Let $(X,T)$ be an aperiodic t.d.s.\ with entropy zero but without the marker property
(such systems are constructed by Dranishnikov and Levin in \cite{DL25}).
Consider the suspension flow  $(S_1X,\R)$ under the constant roof function $1$.
Note that $\mathrm{h_{top}}(S_1X,\R)=\mathrm{h_{top}}(X,T)=0$\footnote{We may view the t.d.s.\ $(X\times[0,1],\,T\times\id)$, which clearly has entropy zero, as an extension of the
time-one system $(S_1X,1)$.}.
Fix $t_1\in\R\setminus\Q$. By Lemma~\ref{lem:important observation}, $(S_1X,t_1)$ has the marker property.

We claim that $(S_1X,1)$ does not have the marker property.
Using the map $x\mapsto (x,0)$, we  view $(X,T)$ as a subsystem of $(S_1X,1)$.
If $(S_1 X, 1)$ had the marker property, then its subsystem $(X, T)$ would also have the marker property, which is a contradiction.
Set $t_2=1$. Thus $(S_1X, t_2)$ does not have the marker property.
\end{proof}

\subsection{Strongly isomorphic extensions}
Let $(X,G)$ be a $G$-t.d.s.
A Borel set $A\subset X$ is called \textbf{null} if $\mu(A)=0$ for every $G$--invariant Borel probability measure $\mu$ on $X$.
A set is called \textbf{full} if its complement is null.

\begin{defn}\label{df:principal}
Let $\pi:(X,G)\to(Y,G)$ be a factor map between two $G$-t.d.s.
We say that $\pi$ is \textbf{strongly isomorphic} if there exists a $G$-invariant full Borel set $E\subset Y$
such that the restriction $\pi:\pi^{-1}(E)\to E$ is one-to-one.
\end{defn}

\begin{lem}\label{lem:strong_composition}
Let $(X,T_3)$, $(Y,T_2)$ and $(Z,T_1)$ be topological dynamical systems.
Suppose that $p:(X,T_3)\to(Y,T_2)$ and $q:(Y,T_2)\to(Z,T_1)$ are strongly isomorphic.
Then $q\circ p:(X,T_3)\to(Z,T_1)$ is strongly isomorphic.
\end{lem}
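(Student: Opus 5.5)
We will prove Lemma~\ref{lem:strong_composition} by producing an explicit invariant full set on which the composition is injective. By Definition~\ref{df:principal}, there is a $T_2$-invariant full Borel set $E_2\subset Y$ such that $p$ is injective on $p^{-1}(E_2)$. Likewise, there is a $T_1$-invariant full Borel set $E_1\subset Z$ such that $q$ is injective on $q^{-1}(E_1)$. The candidate set is
\[
E:=E_1\setminus q(Y\setminus E_2)\subset Z.
\]
For $z\in E$, the fibre $q^{-1}(z)$ is a single point, and that point lies in $E_2$. Hence $(q\circ p)^{-1}(z)=p^{-1}(q^{-1}(z))$ is a single point. So $q\circ p$ is injective on $(q\circ p)^{-1}(E)$.

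Next we check invariance. Since $q$ is equivariant, $q(Y\setminus E_2)$ is $T_1$-invariant, because $Y\setminus E_2$ is $T_2$-invariant. Therefore $E$ is $T_1$-invariant.

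The main point is Borel measurability and fullness of $q(Y\setminus E_2)$, since the image of a Borel set need not be Borel. We work inside $E_1$, where $q$ is injective. The set $q^{-1}(E_1)$ is Borel, and $q$ restricted to it is a continuous injection. By the Lusin--Souslin theorem, $q$ maps Borel subsets of $q^{-1}(E_1)$ to Borel sets. We then use
\[
E_1\cap q(Y\setminus E_2)=q\bigl(q^{-1}(E_1)\setminus E_2\bigr).
\]
This equality holds because every point of $E_1$ has exactly one preimage. So $E=E_1\setminus q\bigl(q^{-1}(E_1)\setminus E_2\bigr)$ is Borel.

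For fullness, let $\nu\in\PP_{T_1}(Z)$. Define a measure $\lambda$ on $Y$ by $\lambda(B):=\nu\bigl(q(B\cap q^{-1}(E_1))\bigr)$. By Lusin--Souslin this is well defined, and since $q$ is injective on $q^{-1}(E_1)$ it is countably additive. It is $T_2$-invariant, using equivariance of $q$ and invariance of $E_1$. Its total mass is $\nu(E_1)=1$, because $q$ is onto and $\nu(E_1)=1$. So $\lambda$ is a $T_2$-invariant probability measure with $q_*\lambda=\nu$. (Alternatively, any invariant lift of $\nu$ obtained by the standard Markov--Kakutani argument would do.) Fullness of $E_2$ gives
\[
\nu\bigl(q(q^{-1}(E_1)\setminus E_2)\bigr)=\lambda(Y\setminus E_2)=0.
\]
Hence $\nu(E)=1$, so $E$ is full. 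This shows that $q\circ p$ is strongly isomorphic, which proves Lemma~\ref{lem:strong_composition}.
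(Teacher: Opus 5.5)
Your proof is correct and follows the paper's route: your set $E=E_1\setminus q(Y\setminus E_2)$ is exactly the paper's $R=q\bigl(E_2\cap q^{-1}(E_1)\bigr)$, and in both arguments its Borelness comes from the Lusin--Souslin theorem. You also supply the $T_1$-invariance of this set and the nullness of its complement (by lifting $\nu$ to a $T_2$-invariant measure on $Y$), which the paper only asserts.
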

\begin{proof}
Let $E$ be a $T_2$-full set such that $p: p^{-1}(E) \to E$ is bijective and let $F$ be a $T_1$-full set such that $q: q^{-1}(F) \to F$ is bijective. Note $E\cap q^{-1}(F)$ is a Borel set and therefore by Suslin's theorem (\cite[Theorem 2.8(2)]{G03}) $R:=q(E\cap q^{-1}(F))$ is Borel. Moreover the complement of $R$ is $T_1$-null. Trivially, $q\circ p:(q\circ p)^{-1}(R)\to R $ is bijective.
\end{proof}

\begin{lem}\label{lem:time 1}
  Let $\pi:(X,\R)\to(Y,\R)$ be a strongly isomorphic factor map between flows.
Fix $\tau>0$. Then $\pi:(X,\tau)\to(Y,\tau)$ is a strongly isomorphic factor map.
\end{lem}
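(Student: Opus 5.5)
The plan is to reduce the statement to a comparison of the two notions of ``full'' set: the one for the flow, defined by $\PP_\R$, and the one for the $\tau$-discretization, defined by $\PP_\tau$. By hypothesis there is an $\R$-invariant Borel set $E\subset Y$ whose complement is null for every $\R$-invariant probability measure on $Y$, and $\pi$ restricted to $\pi^{-1}(E)$ is injective. The map $\pi$ commutes with the time-$\tau$ map because it commutes with the whole flow, and $E$ is in particular $\tau$-invariant. So $\pi:(X,\tau)\to(Y,\tau)$ is a factor map, and it suffices to show that $Y\setminus E$ is null for every $\nu\in\PP_\tau(Y)$. In general $\PP_\tau(Y)$ is strictly larger than $\PP_\R(Y)$, so this needs an argument.

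The key step is averaging over the flow. Given $\nu\in\PP_\tau(Y)$, with $\tau>0$, set
\[
\bar\nu:=\frac1\tau\int_0^\tau (t_*\nu)\,dt .
\]
This is a Borel probability measure because $t\mapsto t_*\nu$ is weak$^*$ continuous. It is $\R$-invariant: for $s\in\R$, the $\tau$-invariance of $\nu$ lets us replace the integral of $t_*\nu$ over $[s,s+\tau]$ by the integral over $[0,\tau]$, by periodicity of $t\mapsto t_*\nu$ with period $\tau$. Since $\bar\nu\in\PP_\R(Y)$, we get $\bar\nu(Y\setminus E)=0$. Because $Y\setminus E$ is $\R$-invariant, $t_*\nu(Y\setminus E)=\nu(Y\setminus E)$ for every $t$. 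Hence $\bar\nu(Y\setminus E)=\nu(Y\setminus E)$, and so $\nu(Y\setminus E)=0$. Measurability of $t\mapsto t_*\nu(B)$ for Borel $B$ follows by a monotone class argument from the continuous-function case.

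This shows $E$ is $\tau$-full, and injectivity of $\pi$ on $\pi^{-1}(E)$ is unchanged, so $\pi$ is strongly isomorphic for the $\tau$-discretization. The main technical point is the $\R$-invariance of the averaged measure, which is a routine periodicity computation. No use of the invariance of $E$ beyond $\R$-invariance is needed, because that already gives invariance under the $\tau$-action.
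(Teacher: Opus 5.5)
Your proposal is correct and follows essentially the same route as the paper: average a $\tau$-invariant measure over $[0,\tau]$ to get an $\R$-invariant measure, then use the $\R$-invariance of $E$ (equivalently of $Y\setminus E$) to transfer fullness back to the original measure. Your version simply makes explicit the periodicity and measurability details that the paper leaves implicit.
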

\begin{proof}
   Let $E\subset Y$ be a  $\R$-invariant full set  such that $\pi: \pi^{-1}(E) \to E$ is one-to-one. We will show that $E$ is also a $\tau$-full set. Fix a $\tau$-invariant probability measure $\mu$. It is sufficient to show $\mu(E)=1$. Let 
   $\nu:=\frac{1}{\tau}\int_0^\tau s_* \mu ds.$ Then $\nu$ is $\R$-invariant. It follows that $\nu(E)=1$. Thus for $\mathrm{Leb}$-a.e $s\in[0,\tau]$, $s_*\mu(E)=1.$ Since $E$ is $\R$-invariant, we
conclude that $\mu(E)=1$. 
\end{proof}

\section{SBP and SFBP}\label{sec:SBP vs SFBP}

\subsection{The small boundary property}
Let $(X,T)$ be a t.d.s. and $C\subset X$. Define the \textbf{orbit capacity} of $C$ by
\begin{equation}\label{eq:ocap}
    \mathrm{ocap}(C):=\lim_{n\to\infty}\frac{1}{n}\max_{x\in X}\sum_{i=0}^{n-1}1_C(T^ix)=\inf_{n\in\mathbb{N}}\frac{1}{n}\max_{x\in X}\sum_{i=0}^{n-1}1_C(T^ix).
\end{equation}
 This definition originates in Shub and Weiss \cite{SW91} (see also \cite{L99}). It is straightforward to verify that for  all closed sets $C$, 
\begin{equation}\label{eq123}
    \ocap(C)=\sup_{\mu\in\PP_T(X)}\mu(C).
\end{equation}

\begin{defn}
    A t.d.s. $(X,T)$ is said to have the \textbf{small boundary property} (SBP) if for each $x\in X$ and any neighborhood $U$ of $x$, there exists a neighborhood $V$of $x$ such that $V\subset U$ and $\ocap(\partial V)=0$.
\end{defn} 

\begin{thm}[\cite{kerr2020almost}, Theorem 5.5 (i) (ii) \& \cite{Gutman2017Corrigendum}]\label{thm:kerr}
    A t.d.s. $(X,T)$ has SBP if and only if it admits a zero-dimensional strongly isomorphic extension. 
\end{thm}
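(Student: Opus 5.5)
The equivalence splits into two directions, and I would treat the easier implication (a zero-dimensional strongly isomorphic extension gives SBP) first, by a direct construction of neighborhoods. Let $\pi:(Z,T)\to(X,T)$ be strongly isomorphic with $Z$ zero-dimensional, and let $E\subset X$ be an invariant full Borel set on which $\pi$ is injective. Fix $x\in X$ and an open $U\ni x$. The fiber $\pi^{-1}(x)$ is compact and contained in the open set $\pi^{-1}(U)$. Since $Z$ is zero-dimensional, there is a clopen $C$ with $\pi^{-1}(x)\subset C\subset \pi^{-1}(U)$.

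Set $V:=X\setminus \pi(Z\setminus C)$. We check its properties:
\begin{itemize}
\item $V$ is open, because $\pi(Z\setminus C)$ is compact.
\item $x\in V$, because the fiber of $x$ lies in $C$.
\item $V\subset\pi(C)\subset U$, by surjectivity of $\pi$ and the choice of $C$.
\item $\partial V\subset \pi(C)\setminus V=\pi(C)\cap\pi(Z\setminus C)$, using that $\pi(C)$ is closed and contains $V$.
\end{itemize}
Every point of $\pi(C)\cap\pi(Z\setminus C)$ has at least two preimages, so this set misses $E$ and is therefore null. Thus $\partial V$ is a closed null set, and by \eqref{eq123} we get $\ocap(\partial V)=0$. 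This proves SBP.

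For the converse, assume SBP. First I produce a countable family $\{V_i\}_{i\in\N}$ of open sets with $\ocap(\partial V_i)=0$ which separates points in the strong sense: for $x\neq y$ there is $i$ with $x\in V_i$ and $y\notin\overline{V_i}$. To get it, take a countable base and, for each pair of basic sets $B,B'$ with $\overline B\subset B'$, use SBP together with compactness of $\overline B$ to choose a finite union of small-boundary neighborhoods inside $B'$ covering $\overline B$. Boundaries of finite unions lie in the union of boundaries, so these unions still have null boundaries. By \eqref{eq123}, each $\partial V_i$ has $\mu(\partial V_i)=0$ for every $\mu\in\PP_T(X)$. Hence the set
\[
E:=\{x:\ T^nx\notin\partial V_i\ \text{for all } n\in\Z,\ i\in\N\}
\]
is an invariant full Borel set.

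For $x\in E$, define the itinerary $\omega(x)\in(\{0,1\}^{\N})^{\Z}$ by $\omega(x)_n(i)=1_{V_i}(T^nx)$. Let $Z$ be the closure in $X\times(\{0,1\}^{\N})^{\Z}$ of the graph $\{(x,\omega(x)):x\in E\}$, with the action $T\times\sigma$ and with $\pi$ the first-coordinate projection. Here $\pi$ is onto because $E$ is dense: its complement is a countable union of closed null sets, hence has empty interior, since an invariant measure supported on a minimal subsystem would otherwise see it (I need this density to hold, so this step must be checked). The key observation is the following. If $(x,\omega)\in Z$ and $T^nx\in V_i$, then $\omega_n(i)=1$. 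If $T^nx\notin\overline{V_i}$, then $\omega_n(i)=0$. This holds by continuity, because these are open conditions.

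This observation yields both required properties.
\begin{itemize}
\item \emph{$Z$ is zero-dimensional.} The projection to the symbolic coordinate is injective on $Z$: if $(x,\omega),(y,\omega)\in Z$ with $x\neq y$, choose $i$ with $x\in V_i$ and $y\notin\overline{V_i}$; then $\omega_0(i)$ must be both $1$ and $0$. So $Z$ is homeomorphic to a subset of a Cantor space.
\item \emph{$\pi$ is strongly isomorphic.} For $x\in E$ no $T^nx$ lies on any $\partial V_i$, so every symbol of $\omega$ is forced, and the fiber over $x$ is the single point $(x,\omega(x))$.
\end{itemize}
Together with the first direction, this gives Theorem~\ref{thm:kerr}. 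I expect the main obstacle to be the bookkeeping in building the strongly separating family $\{V_i\}$ while keeping every boundary null, together with the density and surjectivity point above.
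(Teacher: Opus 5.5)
The paper gives no proof of this statement; it is quoted from Kerr--Szab\'o and Gutman's corrigendum. Your two constructions are the standard route: pulling back a clopen neighbourhood of a fibre for one direction, and the itinerary extension over a countable separating family of small-boundary open sets for the other. Your first direction is correct, and so are your key observation, the injectivity of the symbolic projection, and the singleton fibres over $E$.

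The one step that fails as written is the density of $E$, which you need for $\pi$ to be onto. Your justification is that a countable union of closed null sets has empty interior, because an invariant measure on a minimal subsystem would otherwise see it. This is false, since a nonempty open set need not meet any minimal set. For example, take the two-point compactification $\Z\cup\{\pm\infty\}$ with $T(n)=n+1$. Every invariant measure lives on $\{\pm\infty\}$, so $\{0\}$ is a closed null set with nonempty interior. Measure-theoretic smallness alone cannot give density.

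The step is nevertheless true, for a topological reason. Each $T^{-n}\partial V_i=\partial(T^{-n}V_i)$ is the boundary of an open set, hence closed and nowhere dense. So $X\setminus E$ is meager, and $E$ is dense by the Baire category theorem. Then $\pi(Z)$ is a compact set containing $E$, hence equal to $X$.

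Alternatively, you can bypass density entirely. Replace the closure of the graph by the set $Z'$ of all pairs $(x,\omega)$ with $\omega_n(i)=1$ whenever $T^nx\in V_i$ and $\omega_n(i)=0$ whenever $T^nx\notin\overline{V_i}$. This set is closed, because each constraint is a union of two closed sets. It is $T\times\sigma$-invariant and trivially projects onto $X$. Your injectivity and singleton-fibre arguments apply to it verbatim.

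A minor point in building the family $\{V_i\}$: you only know $V_i\subset B'$. To get $y\notin\overline{V_i}$, you must choose the basic set $B'\ni x$ with $y\notin\overline{B'}$, and then $B\ni x$ with $\overline{B}\subset B'$.
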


The following theorem is deduced from \cite[Theorem 3.3]{L95}.

\begin{thm}\label{thm:finite dienmsion and periodic point=>SBP}
    Let $(X,T)$ be a finite dimensional t.d.s. If $\mathrm{dim}\per(X,T)=0$, then $(X,T)$ has SBP.
\end{thm}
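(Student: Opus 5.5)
The statement to prove is Theorem~\ref{thm:finite dienmsion and periodic point=>SBP}: a finite-dimensional t.d.s.\ $(X,T)$ with $\dim\per(X,T)=0$ has SBP. The plan is to reduce it to Lindenstrauss' result \cite[Theorem 3.3]{L95}. As I recall, that result says: if $X$ is finite dimensional and $\per_n(X,T)$ is finite dimensional for every $n$, then for every $\epsilon>0$ there is a ``small'' cover or partition whose boundaries interact well with the dynamics. Concretely, one can find open sets of diameter $<\epsilon$ covering $X$ whose boundaries are met by every orbit segment of length $N$ in at most $\dim X$ points (or in a bounded proportion of times tending to zero). I will therefore fix $x\in X$ and a neighborhood $U$ of $x$, and build a neighborhood $V\subset U$ of $x$ with $\ocap(\partial V)=0$ directly. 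I will do this by a general-position argument, which is essentially the mechanism behind \cite[Theorem 3.3]{L95}.

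First, choose a continuous $f:X\to[0,1]$ with $f(x)=0$ and $f\equiv1$ off $U$. For $r\in(0,1)$ set $V_r=\{f<r\}$, so that $\partial V_r\subset\{f=r\}$. It suffices to perturb $f$ to a $g$ with the same properties such that, for some $r$, every orbit meets $\{g=r\}$ at most $d=\dim X$ times. Then $\sum_{i=0}^{n-1}1_{\{g=r\}}(T^iy)\le d$ for all $y$, which gives $\ocap(\{g=r\})\le \inf_n d/n=0$.

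The key step is a Baire category argument in $C(X,[0,1])$. The claim is that the set of $g$ near $f$ for which, for all but countably many levels $r$, no $d+1$ distinct points of a single orbit lie in $\{g=r\}$, is residual. For points $T^{i_0}y,\dots,T^{i_d}y$ that are pairwise distinct, the condition $g(T^{i_0}y)=\dots=g(T^{i_d}y)$ imposes $d$ independent equations on a $d$-dimensional parameter set. Generic $g$ therefore avoids it, by the standard Hurewicz-type counting used for embedding theorems: consider the map $y\mapsto (g(T^{i_j}y)-g(T^{i_0}y))_j\in\R^d$ together with the level, and use $\dim\le d$.

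The main obstacle is that the points $T^{i_j}y$ must be distinct. Near periodic points they coincide or come close, so independence fails. This is exactly where $\dim\per(X,T)=0$ enters. On $\per_n(X,T)$, which is closed and zero-dimensional, I first choose $f$ so that its level sets avoid $\per_n$ for $n\le N$. This is possible because a zero-dimensional compact set admits clopen separations, so one can take $r$ outside the countable set $f(\text{clopen-boundary data})$ and arrange $\{f=r\}\cap\per_{\le N}=\emptyset$. By compactness, $\{f=r\}$ then stays a positive distance from $\per_{\le N}$, and on the complement the points $T^iy$, $0\le i<N$, are separated, so the general-position step applies there. Finally, letting $N\to\infty$ through a diagonal Baire argument, I obtain a level set whose orbit capacity is at most $d/N$ for all $N$, hence zero.
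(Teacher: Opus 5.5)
The paper does not prove this statement itself; it quotes it from \cite[Theorem 3.3]{L95}. Reproving it by a Baire/general-position argument is a legitimate alternative. Your use of $\dim\per(X,T)=0$ to keep the level set off periodic points, so that the relevant orbit points are distinct (indeed uniformly separated on compacta), is the right mechanism.

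\textbf{The gap.} The key step, as you formulate it, is false. With the level free, $g(T^{i_0}y)=\dots=g(T^{i_d}y)$ is $d$ equations on a $d$-dimensional set. Such a system is generically \emph{solvable}, with a nonempty zero-dimensional solution set; it is not avoided. Treating the level as an extra parameter adds one equation and one dimension, so the count is unchanged. Accordingly, your claim that for residually many $g$ only countably many levels are bad fails. For $d=0$ and $X$ a Cantor set it says a generic $g$ has countable image. But a generic $g\in C(X,[0,1])$ is injective, so $g(X)$ is a Cantor set. (The bad levels do form a meager set, which would suffice, but proving that needs the argument below anyway.)

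\textbf{The repair.} Fix $r\in(0,1)$ first and apply Baire only in $g$, on the open (hence Baire) set $\mathcal G=\{g\in C(X): g(x)<r,\ g>r\text{ on }X\setminus U\}$.

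(a) For each $n$, the set $\{g\in\mathcal G: r\notin g(\per_n(X,T))\}$ is open and dense. Density requires modifying $g$ on $\per_n(X,T)$: make it locally constant with finitely many values $\neq r$ via a clopen partition, and extend the correction by Tietze. Choosing $r$ alone cannot work, since $g(\per_n(X,T))$ may contain an interval.

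(b) Let $\rho$ be a compatible metric. For each $0=k_0<k_1<\dots<k_d$ and $m\in\N$, set $Q=\{u:\rho(T^{k_i}u,T^{k_j}u)\ge 1/m \text{ for } i\neq j\}$. The set of $g$ with $(r,\dots,r)\notin\{(g(T^{k_j}u))_{j=0}^d: u\in Q\}$ is open and dense. Now there are $d+1$ equations on a set of dimension $\le d$. The ``independence'' must still be justified by localizing rather than by a global count, because the coordinates $g\circ T^{k_j}$ are coupled through the single function $g$. Cover $Q$ by finitely many compact $Q_l$ with $T^{k_0}Q_l,\dots,T^{k_d}Q_l$ pairwise disjoint. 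By Tietze, on each $Q_l$ the map $u\mapsto(g(T^{k_j}u))_j$ can be replaced by any nearby map $Q_l\to\R^{d+1}$. Maps from a compactum of dimension $\le d$ that miss a given point of $\R^{d+1}$ are dense. Handle the $Q_l$ one at a time, with perturbations smaller than the margins already secured.

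Any $g$ in the countable intersection of these sets satisfies $\{g=r\}\cap\per(X,T)=\emptyset$ and meets each non-periodic orbit in at most $d$ points. Hence $V=\{g<r\}$ is a neighbourhood of $x$ inside $U$ with $\ocap(\partial V)=0$. Your $N$-dependent choices of $f$ and $r$, and the unspecified diagonal passage $N\to\infty$, become unnecessary.
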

Conversely, one has the following result.
\begin{lem}\label{lem:SBP-implies-periodic-zero-dim}
Let $(X,T)$ be a t.d.s. If 
$(X,T)$ has SBP, then 
$\dim \per(X,T)=0$.
\end{lem}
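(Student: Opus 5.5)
Fix $n\in\N$; since $\per(X,T)=\bigcup_n \per_n(X,T)$ is a countable union of closed sets, the countable sum theorem for covering dimension reduces the claim to showing $\dim \per_n(X,T)=0$ for every $n$. Fix $n$, a point $x\in P_n:=\per_n(X,T)$ and an open neighborhood $U$ of $x$ in $X$. By SBP we pick an open $V\ni x$ with $V\subset U$ and $\ocap(\partial V)=0$. The point is to show that $\partial V\cap P_n=\emptyset$. Then $V\cap P_n$ is relatively clopen in $P_n$, since its relative boundary lies in $\partial V\cap P_n$. This gives $P_n$ a basis of clopen sets, so $\dim P_n=0$ (for compact metrizable spaces, small inductive dimension $0$ coincides with covering dimension $0$).

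To see that $\partial V$ misses $P_n$, suppose $y\in\partial V$ with $T^n y=y$. Consider the orbit measure
\[
\mu:=\frac1n\sum_{i=0}^{n-1}\delta_{T^i y}.
\]
It is $T$-invariant and gives mass at least $1/n$ to the closed set $\partial V$. By \eqref{eq123}, $\ocap(\partial V)\ge 1/n>0$, which is a contradiction. Alternatively, one can argue directly from \eqref{eq:ocap}: along the orbit of $y$ the frequency of visits to $\partial V$ is at least $1/n$.

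The only potentially delicate points are the reduction through the countable sum theorem, which requires each $P_n$ to be closed (it is, being the fixed-point set of the continuous map $T^n$), and the passage from ``clopen basis'' to covering dimension zero. Both are standard, so I expect no real obstacle; the heart of the argument is the one-line observation that periodic orbits carry invariant measures with atoms.
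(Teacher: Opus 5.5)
Your proposal is correct and follows the paper's argument: the atomic invariant measure on a periodic orbit forces $\partial V\cap\per(X,T)=\emptyset$, so the sets $V\cap\per(X,T)$ are relatively clopen and form a basis. The only difference is bookkeeping: the paper applies this directly to the separable metrizable space $\per(X,T)$, where a clopen basis already gives dimension $0$, whereas you pass through the compact pieces $\per_n(X,T)$ and the countable sum theorem, which is equally valid.
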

\begin{proof}
Assume for a contradiction that $(X,T)$ has SBP. We first claim that if $V\subset X$ is open and $\ocap(\partial V)=0$, then
\begin{equation}\label{eq:boundary-avoids-periodic}
    \partial V\cap \per(X,T)=\emptyset.
\end{equation}
Indeed, if $p\in \partial V$ is periodic with period $k$, then 
\[
\mu_p:=\frac1k\sum_{j=0}^{k-1}\delta_{T^j p}\in\mc \PP_T(X)
\]
satisfies $\mu_p(\partial V)\ge 1/k>0$. This contradicts $\ocap(\partial V)=0$.

Now fix $p\in \per(X,T)$ and an open neighborhood $U$ of $p$ in $X$. By SBP, there exists an open set
$V$ such that
\[
p\in V\subset U
\quad\text{and}\quad
\ocap(\partial V)=0.
\]
By \eqref{eq:boundary-avoids-periodic}, we have $\partial V\cap \per(X,T)=\emptyset$. Set $W:=V\cap \per(X,T)$.
Then $W$ is open in  $\per(X,T)$ and $p\in W\subset U\cap \per(X,T)$. Moreover,
the boundary of $W$ in $\per(X,T)$ satisfies
\[
\partial_{\per(X,T)} W \subset \partial V\cap \per(X,T)=\emptyset,
\]
so $W$ is also closed in $\per(X,T)$. Hence $\per(X,T)$ has a clopen basis. Therefore $\per(X,T)$ is zero-dimensional, which is a contradiction.
\end{proof}

\begin{lem}\label{lem:SBPtau-implies-tau-aperiodic}
Let $(X,\R)$ be a fixed-point free topological flow and $\tau\in \R$ so that $\dim \per(X,\tau)=0$, then $(X,\tau)$ is aperiodic.
\end{lem}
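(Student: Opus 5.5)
Note first that if $\tau=0$ then every point is periodic for $(X,\tau)$, so $\per(X,0)=X$. Assuming $X$ is not a single point (a singleton is a fixed point, excluded by fixed-point freeness), a fixed-point free flow has nondegenerate orbits, hence $X$ contains arcs and $\dim X\ge 1$, contradicting the hypothesis. So we may take $\tau\neq 0$, and by symmetry $\tau>0$. The plan is a contrapositive argument: assume $(X,\tau)$ has a periodic point $p$ and show that $\per(X,\tau)$ contains a nondegenerate arc, forcing $\dim\per(X,\tau)\ge 1$.

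Suppose $\tau^n p=p$ for some $n\ge 1$, i.e.\ $(n\tau)p=p$. The key observation is that $\per(X,\tau)$ is $\R$-invariant. If $\tau^n x=x$, then for every $s\in\R$, since the flow is an $\R$-action and $\R$ is abelian,
\[
\tau^n(sx)=(n\tau+s)x=s\bigl((n\tau)x\bigr)=sx,
\]
so $sx\in\per_n(X,\tau)$. Hence the whole orbit $\R p$, and in particular the compact set $[0,\eta]p$ for any $\eta>0$, lies in $\per(X,\tau)$.

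Next we show that $[0,\eta]p$ is a nondegenerate arc for small $\eta>0$. Since $p$ is not a fixed point of the flow, its stabilizer $\{t: tp=p\}$ is a closed proper subgroup of $\R$. Every closed subgroup of $\R$ is either $\{0\}$, $c\Z$ for some $c>0$, or $\R$, and the last is excluded, so the stabilizer is discrete. Choose $\eta$ smaller than its positive generator (or arbitrary if the stabilizer is trivial). Then $t\mapsto tp$ is a continuous injection $[0,\eta]\to X$, and since the domain is compact, it is a homeomorphism onto its image.

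Therefore $\per(X,\tau)$ contains a subset homeomorphic to $[0,\eta]$. By monotonicity of covering dimension for subsets of separable metric spaces, $\dim\per(X,\tau)\ge\dim[0,\eta]=1$, which contradicts $\dim\per(X,\tau)=0$. The only step needing any care is the monotonicity fact, which is standard for arbitrary subsets of separable metrizable spaces (and here $\per(X,\tau)$ is even $\sigma$-compact, being a countable union of the closed sets $\per_n$). So the proof is essentially elementary, and I expect no real obstacle.
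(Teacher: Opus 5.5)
Your proof is correct and follows the paper's route: a $\tau$-periodic point $p$ with $(n\tau)p=p$ forces its whole orbit into $\per_n(X,\tau)$ by commutativity, and since the stabilizer of $p$ is a proper closed subgroup of $\R$, that orbit contains a nondegenerate arc, so $\dim\per(X,\tau)\ge 1$. The difference is cosmetic: the paper identifies the orbit as a circle of circumference $a$ using $n\tau\in a\Z$, whereas you use a short arc $[0,\eta]p$, which works even when the stabilizer is trivial and so makes your separate treatment of $\tau=0$ unnecessary.
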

\begin{proof}
 As $\dim \per(X,\tau)=0$, $\tau\neq 0$.  Assume for a contradiction $x\in \per(X,\tau)\neq \emptyset$. Let $n\in \N$ so that $(n\tau)x=x$. Denote by $\stab_\R(x)=\{r\in \R|\,rx=x\}$ the  \textit{stabilizer} of $x$ which clearly is a closed subgroup. As  $(X,\R)$ is fixed-point free, $\stab_\R(x)\neq \R$. Thus  $n\tau\in \stab_\R(x)=a\Z$ for some $a\neq 0$. We conclude that the subflow $([0,a]x,\R)$ is isomorphic as a topological flow to the circle of circumference $a$ with the standard $\R$-action by translation. Thus $n\tau=m a$ for some $m\in \Z$. In particular for every $y\in [0,a]x$ $(n\tau) y= y$ which implies $\dim \per(X,\tau)\geq 1$. Q.E.D. 
\end{proof}

\subsection{The small flow boundary property}

 \begin{defn}[\cite{burguet2019symbolic}, Definition 2.3]
Let $(X,\R)$ be a fixed-point free topological flow. A closed cross-section $S$ with injectivity time $\eta$ has a \textbf{small flow boundary}  if $[-\eta,\eta](\pp S)$ is a null set.
 \end{defn}
 For a cross-section $A\subset X$ (with some positive injectivity time),  define the \textbf{counting orbit capacity} of $A$ by\footnote{Note that unlike $\mathrm{ocap}(\cdot)$, $\mathrm{ocap}_\#(\cdot)$ may exceed $1$.} 
 \begin{equation}\label{eq:ocapf}
     \mathrm{ocap}_\#(A):=\lim_{T\to\infty}\frac{1}{T}\max_{x\in X}|\{0\le t\le T:tx\in A\}|,
 \end{equation}
 where   the limit exists and is finite, as $\max_{x\in X}|\{0\le t\le T:tx\in A\}|$ is subadditive (see \cite[P. 4362]{burguet2019symbolic}).

\begin{lem}[\cite{burguet2019symbolic}, Lemma 2.10]
Suppose that $S$ is a closed cross-section with injectivity time $\eta>0$. Then the following are equivalent.
\begin{enumerate}
\item $S$ has a small flow boundary;
\medskip
\item $\mathrm{ocap}_\#(\pp S)=0$.
\end{enumerate}
\end{lem}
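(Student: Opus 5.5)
The plan is to prove the two implications separately, in each case turning the statement into one about orbits meeting $\pp S$ and comparing $\ocap_\#$ with the null-set condition. Fix a closed cross-section $S$ with injectivity time $\eta$, and write $B:=\pp S$, a closed subset of $S$. Closedness follows because $B=\overline S\setminus \ip S$ and $\ip S$ is relatively open in $S$. Hence $K:=[-\eta,\eta]B$ is compact and $\ocap_\#(B)$ is defined. By the flow analogue of \eqref{eq123} (Birkhoff plus weak-$*$ compactness), $K$ is null iff $\sup_{\mu\in\PP_\R(X)}\mu(K)=0$.

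For (2)$\Rightarrow$(1), take $\mu\in\PP_\R(X)$. For every $x$ and $T$, the time spent by the orbit segment $[0,T]x$ in $K$ satisfies
\[
\mathrm{Leb}\{t\in[0,T]: tx\in K\}\le 2\eta\,\bigl(|\{0\le t\le T+2\eta: tx\in B\}|\bigr),
\]
because each visit to $K$ lies within time $\eta$ of a visit to $B$, and visits to $B$ are $2\eta$-separated by injectivity. Integrating over $\mu$ and using invariance gives
\[
\mu(K)\le \frac{2\eta}{T}\max_x|\{0\le t\le T+2\eta: tx\in B\}|\to 2\eta\,\ocap_\#(B)=0 .
\]

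For (1)$\Rightarrow$(2), suppose $\ocap_\#(B)=c>0$. Choose $T_n\to\infty$ and points $x_n$ whose orbit segments hit $B$ at least $cT_n/2$ times. Form the empirical measures $\mu_n=\frac1{T_n}\int_0^{T_n}\delta_{tx_n}\,dt$ and pass to a weak-$*$ limit $\mu$, which is $\R$-invariant. By injectivity, each hit of $B$ contributes a segment of length $2\eta$ inside $K$, and these segments are disjoint. Hence $\mu_n(K)\ge 2\eta\cdot(c/2)-o(1)$, where the error accounts for segments cut off at the ends. Since $K$ is closed, upper semicontinuity gives $\mu(K)\ge \eta c>0$, so $K$ is not null, a contradiction.

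The main obstacle is only bookkeeping. One needs closedness of $\pp S$, which ensures $K$ is compact so that the weak-$*$ limit argument applies. One also needs to handle the boundary effects at the ends of the orbit segments, and these are $O(\eta)/T_n\to0$.
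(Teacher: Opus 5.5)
Your proof is correct and is essentially the paper's proof of Corollary~\ref{cor:ocapsharp-null} with $A=\pp S$: the flow-box covering estimate of Lemma~\ref{lem:orbitcap-flowbox} gives (2)$\Rightarrow$(1), and empirical measures plus Portmanteau give (1)$\Rightarrow$(2). The only variation is that you integrate against $\mu$ using invariance and Fubini instead of passing through the ergodic decomposition and the ergodic theorem, which is a harmless simplification. There is one small slip: a visit to $K$ at time $t\in[0,T]$ comes from a hit of $B$ at some time in $[t-\eta,t+\eta]$, so the pointwise count should be over $[-\eta,T+\eta]$ rather than $[0,T+2\eta]$; this is immaterial once you take the maximum over $x$.
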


\begin{defn}\label{def:SFBP}
A topological flow $(X,\R)$ is said to have  the \textbf{small flow boundary property} (SFBP) if $(X,\R)$ is fixed-point free and for  each $x\in X$ and closed cross-section $S'$ with $x\in \ip(S')$, there exists a closed subset $S\subset S'$ such that $x\in \ip(S)$ and $S$ has a small flow boundary.
\end{defn}

In \cite{gutman2024strongly} it was shown that every finite-dimensional  fixed-point free topological flow and with only countably many periodic orbits satisfies SFBP.
By \cite[Theorem~5]{bowen1972expansive}, an expansive flow has at most countably many periodic orbits, and by \cite[Lemma~1]{bowen1972expansive} it has only finitely many fixed points, each of which is isolated. Since the set $F$ of fixed points is finite and each point of $F$ is isolated, $F$ is open; hence $X_0:=X\setminus F$ is a closed $\R$-invariant subset and therefore compact. 
In particular, the restricted flow $(X_0,\R)$ is fixed-point free. 
Many classical examples fall into this framework. 
For instance, Anosov flows are expansive \cite{anosov1967geodesic}; in particular, the geodesic flow on a compact smooth manifold of negative sectional curvature is Anosov and hence expansive. 
Also Axiom~A flows are expansive when restricted to their nonwandering sets \cite{Bowen1972}.

The following  lemma plays an important role in our construction of small flow-generated entropy factors.
\begin{lem}\label{lem:key2}
Let $S$ be a global closed cross-section with an injectivity time $\eta>0$.
 Assume that $\xp$ has SFBP.   Let $U,V\subset S$ with $\ip U=U$, $\ip V=V$ and $\overline{U}\subset V$. Then there exists $U'\subset S$ with $\ip U'=U'$ such that $U\subset U'\subset V$ and $\ocap_\#(\pp U')=0.$
\end{lem}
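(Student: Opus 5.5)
The plan is a standard compactness-and-union argument. First, for each $x\in\overline U$ we find a small closed piece of $S$ around $x$ with small flow boundary, sitting inside $V$; then we take a finite union and pass to its flow interior.

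\textbf{Step 1 (local pieces).} Fix $x\in\overline{U}$. Since $\overline U\subset V=\ip V$, the point $x$ lies in $\mathrm{Int}([-\eta,\eta]V)$. So there is a closed neighbourhood $N_x$ of $x$ in $S$ with $N_x\subset V$ and $x\in \ip(N_x)$. To get it, take a small closed ball in $S$ around $x$ whose flow box $[-\eta,\eta]N_x$ contains a neighbourhood of $x$; this is possible because $x$ is in the flow interior of $V$. Then $N_x$ is a closed cross-section with injectivity time $\eta$ and $x\in\ip(N_x)$. By SFBP (Definition~\ref{def:SFBP}) there is a closed $S_x\subset N_x$ with $x\in\ip(S_x)$ and $\ocap_\#(\pp S_x)=0$; the latter follows from the lemma of Burguet equating small flow boundary with $\ocap_\#(\pp S)=0$. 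One technical point must be checked here: the flow interior and boundary of $S_x$ are computed with respect to injectivity time $\eta$ inside $S$, and SFBP was applied with $S'=N_x$. Both computations agree, since $\mathrm{Int}([-\eta,\eta]A)\cap A$ depends only on $A$ and $\eta$.

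\textbf{Step 2 (finite union).} The sets $\ip(S_x)$ are relatively open in $S$ near $x$ and cover the compact set $\overline U$. Choose finitely many $x_1,\dots,x_k$ with $\overline U\subset\bigcup_i \ip(S_{x_i})$, and put $W:=\bigcup_i S_{x_i}$, a closed subset of $V$. By Lemma~\ref{lem:flow boudary}(1), $\pp W\subset\bigcup_i\pp S_{x_i}$. Since $\ocap_\#$ is subadditive under finite unions (immediate from \eqref{eq:ocapf}), we get $\ocap_\#(\pp W)=0$. By Lemma~\ref{lem:flow boudary}(2), $U\subset\bigcup_i\ip S_{x_i}\subset \ip W$.

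\textbf{Step 3 (make it flow-open).} Set $U':=\ip W$. Then $U\subset U'\subset W\subset V$. We need $\ip U'=U'$ and $\ocap_\#(\pp U')=0$. From $\mathrm{Int}([-\eta,\eta]W)\supset(-\eta,\eta)U'$ (Remark~\ref{rem:equiv def. of flow b}), the set $(-\eta,\eta)U'$ is open. Hence each point of $U'$ lies in $\mathrm{Int}([-\eta,\eta]U')$, which gives $\ip U'=U'$. For the boundary, $\pp U'=\overline{U'}\setminus U'\subset W\setminus\ip W=\pp W$, since $\overline{U'}\subset W$ because $W$ is closed. Therefore $\ocap_\#(\pp U')\le\ocap_\#(\pp W)=0$.

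\textbf{Main obstacle.} The delicate step is Step 1, namely producing the closed cross-section $N_x\subset V$ with $x$ in its flow interior. This requires care near $\pp S$, where flow interiors computed in $S$ and in $N_x$ could a priori differ. I would handle it using the characterization in Remark~\ref{rem:equiv def. of flow b}: take $N_x=S\cap\overline{B}$ for a small ball $B$ with $(-\eta,\eta)(S\cap B)\subset\mathrm{Int}([-\eta,\eta]V)$, chosen so that $B\cap S\subset V$. Then verify directly that $x\in\mathrm{Int}([-\eta,\eta]N_x)$. This uses that $\mathrm{Int}([-\eta,\eta]V)=(-\eta,\eta)V$ is open and that flowing is continuous, so that nearby points of the open flow box project to $S$ near $x$.
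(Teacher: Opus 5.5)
Your proof is correct and follows essentially the same route as the paper: apply SFBP locally, extract a finite subcover by compactness, and control the flow boundary and flow interior of the union via Lemma~\ref{lem:flow boudary}. The differences are organizational. The paper covers only $\pp U$ by flow-open pieces $V_{x_i}\subset V$ and sets $U'=U\cup\bigcup_i V_{x_i}$, whereas you cover all of $\overline U$ by the closed SFBP pieces and take $U'=\ip W$. Your Steps~1 and~3 also spell out details the paper leaves implicit: building a closed cross-section $N_x\subset V$ with $x\in\ip N_x$, and passing from closed pieces to flow-open ones via $\pp(\ip W)\subset\pp W$.
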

\begin{proof}
   Since $\xp$ has SFBP, it follows that for  all $x\in \pp U$, there exists a subset $V_x\subset V$ with $\ip V_x=V_x$ such that $x\in V_x$ and $\ocap_\#(\pp V_x)=0$. Since $\pp U$ is compact, there exist $x_1,\ldots,x_N$ such that 
    $\pp U\subset \cup_{i=1}^NV_{x_i}$. By Lemma \ref{lem:flow boudary} (1), one has 
    \begin{equation}\label{eq:15.37}
        \pp U\subset \cup_{i=1}^N \pp V_{x_i}.
    \end{equation}
    Set 
    \[U':=\cup_{i=1}^NV_{x_i}\cup U\subset S.\]
    Applying Lemma \ref{lem:flow boudary} (1) again and using \eqref{eq:15.37}, we obtain $\pp U'\subset \cup_{i=1}^N\pp V_{x_i}$, which implies that  \[\ocap_\#(\pp U')\le\sum_{i=1}^N\ocap_\#(\pp V_{x_i})=0.\] Furthermore,  by Lemma \ref{lem:flow boudary} (2), we deduce that
    \[U'=\cup_{i=1}^N\ip V_{x_i}\cup \ip U\subset \ip(\cup_{i=1}^NV_{x_i}\cup U)=\ip U',\]
    and hence 
    \[U'=\ip U'.\qedhere\]
\end{proof}

\begin{prop}
    \label{prop:sfbp_suspension_mdim_zero}
Let $(X,T)$ be a t.d.s.\ and
$f:X\to \R_{>0}$ a continuous function. If the suspension flow $(S_fX,\R)$ has
SFBP, then
$\operatorname{mdim}(S_fX,\R)=0$.
\end{prop}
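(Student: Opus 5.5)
Since mean dimension is only an invariant up to conjugacy, we first reduce to the constant roof case. The relevant input is that SFBP passes to the base: by \cite[Lemma~2.15]{burguet2019symbolic}, a t.d.s.\ has SBP if and only if any suspension flow over it has SFBP. So from the hypothesis that $(S_fX,\R)$ has SFBP we get that $(X,T)$ has SBP. By the same lemma, applied now to the constant roof function $1$, the suspension $(S_1X,\R)$ also has SFBP. In this way we avoid transporting SFBP or mean dimension along the orbit equivalence of Lemma~\ref{lem:orbit_eq}, which need not preserve mean dimension of flows.

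The first step is to treat the unit suspension directly. Fix $\tau\in\R\setminus\Q$. The $\tau$-discretization $(S_1X,\tau)$ factors onto the irrational rotation $(\T^1,R_\tau)$, exactly as in the proof of Lemma~\ref{lem:important observation}. Hence $(S_1X,\tau)$ is aperiodic. Theorem~\ref{thm:SFBPvsSBP_tau} would then give SBP, but it is not yet proved at this point in the paper. Instead we compute mean dimension by hand. Since $(X,T)$ has SBP, \cite{LW} gives $\mdim(X,T)=0$. The time-one map $(S_1X,1)$ is a factor of $(X\times[0,1],T\times\id)$, as noted in the footnote to Proposition~\ref{prop:time-dependence-mp}. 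We have $\mdim(X\times[0,1],T\times\id)\le \mdim(X,T)+\mdim([0,1],\id)=0$, because an identity map on a finite-dimensional space has zero mean dimension. Mean dimension does not increase under factor maps, so $\mdim(S_1X,1)=0$. Proposition~\ref{prop:mdim time-1} then gives $\mdim(S_1X,\R)=0$.

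The second step transfers this to the roof $f$. Choose constants $0<a\le f\le b$. The cleanest route, which avoids orbit equivalence entirely, is to run the same argument directly for $S_f X$. Let $c=\max f$. Consider $(X\times[0,c],T\times\id)$, or more precisely the closed subsystem $\{(x,s):0\le s\le f(x)\}$. The suspension flow at integer-like times is not quite its factor, since the return times are not constant. Instead, bound $\mathrm{Widim}_\epsilon(S_fX,d_N)$ directly. A point of $S_fX$ is determined by $(x,s)$, and its $d_N$-orbit segment is determined up to $\epsilon$ by $s$ together with the $d_{\lceil N/a\rceil+1}$-orbit of $x$ under $T$. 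An $\epsilon$-embedding of $(X,d_{M})$ into a $K$ with $\dim K\le \mathrm{Widim}$, with $M=\lceil N/a\rceil+1$, combined with the coordinate $s$, then yields an $\epsilon'$-embedding of $(S_fX,d_N)$ into $K\times[0,b]$. This gives
\[
\mathrm{Widim}_{\epsilon'}(S_fX,d_N)\le \mathrm{Widim}_\epsilon(X,d_M)+1.
\]
Dividing by $N$ and letting $N\to\infty$ gives at most $a^{-1}\mdim(X,T)=0$.

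The main obstacle is making this embedding rigorous near the identification $(x,f(x))\sim(Tx,0)$, where the coordinate $s$ is discontinuous. To handle it, I would use a partition of unity in $s$ near $0$ and $f(x)$, mapping into $K\times K\times[0,1]$ or a cone. This costs at most a bounded additive dimension, which disappears after dividing by $N$.
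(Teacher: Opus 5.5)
Your opening reductions match the paper's (Burguet's lemma gives SBP of $(X,T)$, and Lindenstrauss--Weiss gives $\mdim(X,T)=0$). The real gap is in your second step. For a nonconstant roof, the claim that the $d_N$-orbit segment of $[(x,s)]$ is determined up to $\epsilon'$ by $s$ and the $d_M$-orbit of $x$ is false. The trouble is not the discontinuity of $s$ at $(x,f(x))\sim(Tx,0)$.

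If $d_M(x,x')<\epsilon$, the $k$-th return times of $[(x,s)]$ and $[(x',s)]$ to the base differ by $\sum_{i<k}\bigl(f(T^ix)-f(T^ix')\bigr)$. These errors accumulate over the up to $N/a$ returns in time $N$. For instance, take $X=[0,1]$, $T=\id$, $f(x)=1+x$. After a number of returns of order $1/|x-x'|$, one orbit is on the base while the other is near mid-height. So once $N$ is of that order, $d_N([(x,0)],[(x',0)])$ is bounded below by a constant independent of $\epsilon$, even though $d_M(x,x')=|x-x'|$ for every $M$.

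Since $\epsilon$ must be fixed before $N\to\infty$, the map $[(x,s)]\mapsto(g(x),s)$ is not an $\epsilon'$-embedding of $(S_fX,d_N)$. Naively recording the Birkhoff sums to fix this adds dimension of order $N$. This synchronization problem is exactly what the paper sidesteps. It proves $\mdim(S_1X,\R)=0$ and then transfers the vanishing of mean dimension to $S_fX$ along the orbit equivalence of Lemma~\ref{lem:orbit_eq}, via \cite[Theorem~4.1]{JinQiao2026} for fixed-point free flows. Orbit equivalence indeed need not preserve the value of mean dimension, but only its vanishing is needed. By deliberately avoiding that tool, you left the nonconstant-roof case without a valid argument.

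The unit-roof step also rests on a false statement: mean dimension \emph{can} increase under factor maps. A continuous surjection from the Cantor set $C$ onto $[0,1]$ induces a factor map from $(C^\Z,\sigma)$, of mean dimension $0$, onto $([0,1]^\Z,\sigma)$, of mean dimension $1$. The map $X\times[0,1]\to S_1X$ is at most two-to-one, but monotonicity under finite-to-one factor maps is a nontrivial result. The paper instead cites \cite[Theorem~1.2]{levin2023finitetooneequivariantmapsmean} for $\mdim(S_1X,\R)=\mdim(X,T)$.

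For $f\equiv 1$ you could instead argue by hand with two charts glued across the base. However, as with your proposed map into $K\times K\times[0,1]$, such a gluing roughly doubles $\dim K$ rather than adding a bounded constant. So the displayed inequality $\mathrm{Widim}_{\epsilon'}(S_fX,d_N)\le \mathrm{Widim}_\epsilon(X,d_M)+1$ is not what that construction yields. The doubling is harmless only because $\mdim(X,T)=0$.
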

\begin{proof}
Let $(S_1X,\R)$ denote the unit-roof suspension over $(X,T)$. By
\cite[Lemma~2.15]{burguet2019symbolic},
$(X,T)$ has SBP. Hence by By \cite[Theorem~5.4]{LW},$  \operatorname{mdim}(X,T)=0$.
From \cite[Theorem~1.2]{levin2023finitetooneequivariantmapsmean}, it holds $  \operatorname{mdim}(S_1X,\R)
  =
  \operatorname{mdim}(X,T)
  =
  0$.
 By Lemma  \ref{lem:orbit_eq} 
 $(S_fX,\R)$ and $(S_1X,\R)$ are topologically orbit equivalent. Since
 in addition both suspension flows are fixed-point free we may apply
\cite[Theorem~4.1]{JinQiao2026}, to conclude
$
  \operatorname{mdim}(S_fX,\R)=0 
$.
\end{proof}

\subsection{Variational principle for the counting orbit capacity}

In analogy with \eqref{eq123}, we present a characterization of the counting orbit capacity of a closed subset of a global cross-section as the supremum of the measures of this set, taken over a suitable class of measures (see Proposition \ref{prop:characterize of flow capactiy} below). We start by proving some properties of the counting orbit capacity.
\begin{lem}\label{eq:from X to S} Let $(X,\R)$ be a fixed-point free flow, and let $S$ be a global closed cross-section. For any subset $A$ of $S$, there exists $C:=C(S)>0$ such that for all $T>0$, one has 
     \[\max_{x\in X}|\{0\le t\le T:tx\in  A\}|\le\max_{x\in S}\bigl|\{0\le t\le T:tx\in A\}\bigr|
+C.\]
\end{lem}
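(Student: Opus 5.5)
Since $S$ is global, there is $\xi>0$ with $[-\xi,\xi]S=X$; the plan is to take $C:=2\xi/\eta+2$ (or any bound of this type) and to compare an arbitrary orbit segment with one that starts on $S$. First, fix $x\in X$ and $T>0$. If $\{0\le t\le T: tx\in A\}$ is empty, there is nothing to prove. Otherwise, write $x=s y$ with $y\in S$ and $s\in[-\xi,\xi]$; this is possible because $[-\xi,\xi]S=X$. Then $tx\in A$ if and only if $(t+s)y\in A$. Hence the counting set for $x$ on $[0,T]$ is in bijection, via the shift $t\mapsto t+s$, with the counting set for $y$ on the window $[s,s+T]$.

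Next, I split the window $[s,s+T]$ into $[s,s+T]\cap[0,T]$ and at most two leftover intervals, each of length at most $|s|\le\xi$. The part inside $[0,T]$ contributes at most $\max_{z\in S}|\{0\le t\le T: tz\in A\}|$, since $y\in S$. For the leftover pieces, I use that $A\subset S$ and that $S$ has injectivity time $\eta$. Two distinct return times $t<t'$ of the same orbit to $S$ must satisfy $t'-t>\eta$, for otherwise $t'y = (t'-t)(ty)$ with $ty\in S$ contradicts injectivity of $[-\eta,\eta]\times S\to X$. (If $t'-t\le 2\eta$, writing both points relative to the midpoint also gives a contradiction, but the bound $t'-t>\eta$ suffices.) Consequently, any interval of length $\ell$ contains at most $\ell/\eta+1$ times at which the orbit lies in $S$, and hence in $A$. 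So each leftover interval contributes at most $\xi/\eta+1$.

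Summing the pieces gives
\[
|\{0\le t\le T: tx\in A\}|\le \max_{z\in S}|\{0\le t\le T: tz\in A\}|+2(\xi/\eta+1).
\]
Taking the maximum over $x$ yields the lemma with $C=2\xi/\eta+2$. This constant depends only on $S$, through $\xi$ and $\eta$, and not on $A$ or $T$.

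The main subtlety is the separation-of-return-times step. One has to check carefully that injectivity of $(t,x)\mapsto tx$ on $[-\eta,\eta]\times S$ forbids two visits to $S$ within time $\eta$. This also needs care for periodic orbits: if $y$ is periodic with period $p$, then $p>2\eta$ by injectivity, so the count remains valid. Everything else is bookkeeping.
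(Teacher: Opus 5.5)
Your proof is correct and follows essentially the same route as the paper: move along the flow to a point of $S$ within time $\xi$, and bound the hits in the leftover window of length at most $\xi$ using the separation of return times to $S$ forced by injectivity of $[-\eta,\eta]\times S\to X$. The differences are cosmetic. The paper uses separation $>2\eta$ and a single leftover interval, giving $C=\xi/(2\eta)+1$, while your coarser bookkeeping gives $C=2\xi/\eta+2$.
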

\begin{proof}
Denote the injectivity time of $S$ by $\eta$. As $S$ is global, there exists $\xi$ such that 
\[[-\xi,\xi]S=X.\]
Fix $x\in X$. We thus have $\tau\in[-\xi,\xi]$
such that $z:=\tau x\in S$.
Then
\[
\{0\le t\le T:tx\in A\}
=
\{-\tau\le t\le T-\tau:t z \in A\}.
\]
Note that
\[
\bigl|\{-\tau\le t\le T-\tau: t z \in A\}\bigr|
\le
\bigl|\{0\le t\le T:tz\in A\}\bigr|+\frac{|\tau|}{2\eta}+1.
\]
Since $|\tau|\le \xi$, taking maximum over $x\in X$ yields the desired inequality.
\end{proof}

\begin{lem}\label{lem:continuity of capacity}
Let $(X,\R)$ be a fixed-point free flow, and let $S$ be a global closed cross-section. Let $A\subset S$ be closed and set $A_\epsilon:=\{x\in S:d(x,A)<\epsilon\}$.
Then
\[
\lim_{n\to\infty}\ocap_\#(A_{1/n})=\ocap_\#(A).
\]
\end{lem}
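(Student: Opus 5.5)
Since $A\subset A_{1/n}$, monotonicity gives $\ocap_\#(A)\le\ocap_\#(A_{1/n})$, and $\ocap_\#(A_{1/n})$ is nonincreasing in $n$. So the limit $L$ exists and $L\ge \ocap_\#(A)$. The content is the reverse inequality $L\le\ocap_\#(A)$. I will prove it by a compactness argument at a fixed finite time horizon, and then pass to the limit using subadditivity.

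First, fix $T>0$ and set
\[
N_T(B):=\max_{x\in X}|\{0\le t\le T:tx\in B\}| ,
\]
so that $\ocap_\#(B)=\inf_T \frac{1}{T}\bigl(N_T(B)+c\bigr)$ for a suitable constant $c$. Here I use that $N_T$ is subadditive, $N_{T+T'}\le N_T+N_{T'}$, by splitting the time interval. To absorb boundary effects I will instead work with $N_T(B)+1$, which is also subadditive, so Fekete's lemma gives the limit as an infimum. The key claim is that for each fixed $T$,
\[
N_T(A_{1/n})\le N_T(A)\quad\text{for all large }n .
\]
Suppose not. Then for infinitely many $n$ there is $x_n$ and $k:=N_T(A)+1$ times $0\le t^n_1<\dots<t^n_k\le T$ with $t^n_i x_n\in A_{1/n}\subset S$. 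Since the hitting times of the cross-section $S$ by a single orbit are separated by more than $2\eta$ (by injectivity), consecutive times satisfy $t^n_{i+1}-t^n_i>2\eta$; if a gap were at most $2\eta$, the two points of $S$ on the same orbit would contradict injectivity of $[-\eta,\eta]\times S\to X$. Pass to a subsequence with $x_n\to x$ and $t^n_i\to t_i\in[0,T]$. Then $t_{i+1}-t_i\ge 2\eta>0$, so the limits are still distinct. By continuity of the flow, $t^n_ix_n\to t_ix$, and since $d(t^n_ix_n,A)<1/n$ and $A$ is closed, $t_ix\in A$. This gives $k$ distinct times in $[0,T]$ at which the orbit of $x$ meets $A$, contradicting $k>N_T(A)$.

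To conclude, take $\epsilon>0$ and choose $T$ with $\frac{1}{T}(N_T(A)+1)<\ocap_\#(A)+\epsilon$. For large $n$ we then have
\[
\ocap_\#(A_{1/n})\le \frac{1}{T}\bigl(N_T(A_{1/n})+1\bigr)\le\frac1T\bigl(N_T(A)+1\bigr)<\ocap_\#(A)+\epsilon ,
\]
which gives $L\le\ocap_\#(A)$.

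The main obstacle is the justification that the limit equals the infimum of $(N_T+1)/T$, that is, the subadditivity of $N_T+1$, which needs $N_{T+T'}\le N_T+N_{T'}$ up to the endpoint double count. I would verify this carefully, possibly using Lemma~\ref{eq:from X to S} to reduce the maximum to points of $S$. The separation of hitting times by $2\eta$ comes directly from the injectivity time.
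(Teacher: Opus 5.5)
Your proof is correct and follows the same strategy as the paper: fix $T$, extract a limit point from near-maximizing points for $A_{1/n}$, transfer the hits to $A$ using closedness, and conclude via the infimum formula $\ocap_\#=\inf_T N_T/T$ given by subadditivity (your $+1$ is harmless but unnecessary, since $N_T$ is already subadditive and the shared endpoint only helps the union bound). Your step of passing to limits of the individual hitting times $t_i^n$, with the $2\eta$-separation keeping the limit times distinct, is more careful than the paper's version, which asserts $tx_n\in A_{1/n}\Rightarrow ty\in A_{2\rho}$ at the same time $t$ even though $ty$ need not lie in $S$.
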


\begin{proof}
Since $A_{1/n}\downarrow A$ and $\ocap_\#$ is monotone, the limit
$$K:=\lim_{n\to\infty}\ocap_\#(A_{1/n})$$ exists and satisfies $K\ge \ocap_\#(A)$.
We show $\ocap_\#(A)\ge K$.

Fix $T>0$. For each $n\in\N$ choose $x_n\in X$ such that
\begin{equation}\label{eq:202639}
\begin{split}
        \frac1T\bigl|\{0\le t\le T:tx_n\in A_{1/n}\}\bigr|
=&\frac1T\max_{x\in X}\bigl|\{0\le t\le T:tx\in A_{1/n}\}\bigr|\\
\ge& \ocap_\#(A_{1/n})\ge K.
\end{split}
\end{equation}
By compactness of $X$, passing to a subsequence we may assume   that  $x_n\to y\in X$.
Let $k\in\N$ be arbitrary, and $\rho:=1/(2k)$. By uniform continuity of $(t,z)\mapsto tz$ on $[0,T]\times X$,
for all large enough $n\in\N$ we have
\[
\sup_{0\le t\le T} d(tx_n,ty)<\rho
\quad\text{and}\quad 1/n<\rho.
\]
Hence, for such $n\in\N$,
\[
tx_n\in A_{\frac{1}{n}}\ \Longrightarrow\ ty\in A_{\rho+\frac{1}{n}}\subset A_{2\rho}.
\]
Thus, from \eqref{eq:202639}, for all $k\in\N$,
\begin{equation}\label{eq:2026392146}
    \frac1T\bigl|\big\{0\le t\le T:ty\in A_{1/k}\big\}\bigr|=\frac1T\bigl|\{0\le t\le T:ty\in A_{2\rho}\}\bigr|\ge K.
\end{equation}

Since $A=\bigcap_{m\ge1}A_{1/m}$ is closed, $A_{1/m}\subset S$
and  the set $\{t\in[0,T]:tx\in S\}$ is finite, it follows that
\begin{align*}
  \frac{1}{T} \bigl|\{0\le t\le T:ty\in A\}\bigr|
=\lim_{k\to\infty}\frac{1}{T}\bigl|\{0\le t\le T:ty\in A_{1/k}\}\bigr|\overset{\eqref{eq:2026392146}}{\ge} K.
\end{align*}
By subadditivity, $\ocap_\#(A)=\inf_{T>0}\frac1T\max_{x\in X}|\{0\le t\le T:tx\in A\}|\ge K$, as desired.
\end{proof}
As a direct corollary, one has the following lemma.
\begin{lem}\label{lem:key1}
Let $(X,\R)$ be a fixed-point free flow, and let $S$ be a global closed cross-section with an injectivity time.
Let $E\subset S$ be closed and $\epsilon>0$. Then there exists a relatively open set
$U\subset S$ such that $E\subset U$ and
\[
\ocap_\#(U)\le \ocap_\#(E)+\epsilon.
\]
\end{lem}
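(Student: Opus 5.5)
The plan is to take $U:=A_{1/n}$ for a suitably large $n$, where, following the notation of Lemma~\ref{lem:continuity of capacity}, we write $A_\epsilon:=\{x\in S:d(x,A)<\epsilon\}$. We apply Lemma~\ref{lem:continuity of capacity} with $A:=E$. Since $E$ is a closed subset of the global closed cross-section $S$, the lemma gives
\[
\lim_{n\to\infty}\ocap_\#(E_{1/n})=\ocap_\#(E).
\]

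In order, the steps are as follows.

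First, check that each $E_{1/n}$ is a legitimate candidate. It is relatively open in $S$, because it is the preimage of $(-\infty,1/n)$ under the continuous function $x\mapsto d(x,E)$ restricted to $S$. It also contains $E$, since points of $E$ are at distance $0$ from $E$.

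Second, check that $\ocap_\#$ is finite on these sets, so that the limit statement is meaningful. Each $E_{1/n}\subset S$ is a cross-section with the same injectivity time $\eta$ as $S$. Along any orbit segment of length $T$, the visits to $S$ are $2\eta$-separated, so their number is at most $T/(2\eta)+1$. Hence $\ocap_\#(E_{1/n})\le 1/(2\eta)<\infty$, and the same bound applies to $E$.

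Third, use the convergence to choose $n$ with $\ocap_\#(E_{1/n})\le \ocap_\#(E)+\epsilon$, and set $U:=E_{1/n}$.

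There is essentially no obstacle here; the substantive work (the compactness argument passing to a limit orbit) was already done in Lemma~\ref{lem:continuity of capacity}. The only points to verify are the finiteness in the second step and that the lemma applies, which needs $E$ closed and $S$ a global closed cross-section with an injectivity time. Both are hypotheses of the present statement.
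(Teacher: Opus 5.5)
Your proposal is correct and is exactly the paper's argument: the paper records this lemma as a direct corollary of Lemma~\ref{lem:continuity of capacity}, taking $U=E_{1/n}$ for $n$ large. Your additional checks are the routine details the paper leaves implicit. These are that $E_{1/n}$ is relatively open and contains $E$, and that $\ocap_\#$ is finite by the bound $1/(2\eta)$ coming from the $2\eta$-separation of returns to $S$.
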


We are now able to provide a characterization of $\ocap_\#(\cdot)$. For $ r\ge 0$, denote by $\M_{\le r}(S)$ the space of all measures $\mu$ on $S$ with $\mu(S)\le r$.
It follows from the Banach-Alaoglu theorem that $\M_{\le r}(S)$ is compact under the weak* topology for all $ r\ge 0$.

For the reader's convenience, we recall  part of the Portmanteau theorem (see e.g. \cite[Theorem 2.1]{Billingsleybook1999}):
\begin{lem}\label{lem:portmanteau}
Let $(X,d)$ be a metric space and let $\mu_n,\mu$ be Borel probability measures on $X$.
Assume that $\mu_n$ converges to $\mu$. Then
\begin{enumerate}
\item for every closed set $F\subset X$,
\[
\limsup_{n\to\infty}\mu_n(F)\le \mu(F);
\]
\item for every open set $G\subset X$,
\[
\liminf_{n\to\infty}\mu_n(G)\ge \mu(G).
\]
\end{enumerate}
\end{lem}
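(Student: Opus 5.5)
The statement is the classical Portmanteau theorem. We interpret ``$\mu_n$ converges to $\mu$'' as weak$^*$ convergence: $\int f\,d\mu_n\to\int f\,d\mu$ for every bounded continuous $f:X\to\R$. The plan is to prove (1) by approximating the indicator of a closed set from above by continuous functions, and then to obtain (2) from (1) by taking complements.

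For (1), fix a closed set $F\subset X$. For $k\in\N$ define
\[
f_k(x):=\max\bigl(0,\,1-k\,d(x,F)\bigr).
\]
Each $f_k$ is Lipschitz (hence continuous), takes values in $[0,1]$, equals $1$ on $F$, and satisfies $f_k\ge 1_F$. The sequence is monotone, $f_{k+1}\le f_k$, and since $F$ is closed we have $d(x,F)>0$ for $x\notin F$. Therefore $f_k(x)\downarrow 1_F(x)$ pointwise.

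For fixed $k$,
\[
\limsup_{n\to\infty}\mu_n(F)\le\limsup_{n\to\infty}\int f_k\,d\mu_n=\int f_k\,d\mu.
\]
Letting $k\to\infty$ and applying the dominated convergence theorem (with dominating function $1$, which is $\mu$-integrable because $\mu$ is a probability measure) gives $\int f_k\,d\mu\to\mu(F)$. This proves (1).

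For (2), let $G$ be open and set $F:=X\setminus G$, which is closed. Since all the measures are probability measures, $\mu_n(G)=1-\mu_n(F)$ and $\mu(G)=1-\mu(F)$. Hence
\[
\liminf_{n\to\infty}\mu_n(G)=1-\limsup_{n\to\infty}\mu_n(F)\ge 1-\mu(F)=\mu(G),
\]
using (1).

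There is no real obstacle here. The only point needing care is the use of total mass $1$, both for the dominated convergence step and for the complementation in (2).
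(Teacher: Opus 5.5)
Your proof is correct and is essentially the argument the paper relies on: the paper gives no proof of its own but cites Billingsley's Theorem~2.1, which likewise dominates $1_F$ by the continuous functions $(1-d(x,F)/\varepsilon)^+$, lets $\varepsilon\to 0$, and obtains the open-set case by complementation. As a side remark, your argument only uses that $\mu$ is finite and that $\mu_n(X)\to\mu(X)$ (test against $f\equiv 1$), so it also covers the non-probability measures in $\mathcal{M}_{\le K}(S)$ to which the paper actually applies the lemma.
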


\noindent Denote $K:=\ocap_\#(S)$ and define $\M_{\le K}(S):=\{\mu\ge0:\mu(S)\le K\}$ as well as
\[\mathcal{OM}_S:=\{\mu\in \M_{\le K}(S):\mu(A)\le \ocap_\#(A),\text{ for  all closed }A\subset S\}.\]
Notice that the set $\mathcal{OM}_S$ is weak* compact in $\M_{\le K}(S)$.
Indeed, the set $\M_{\le K}(S)$ is weak* compact, and for each closed $F\subset S$
the map $\mu\mapsto \mu(F)$ is upper semicontinuous, from the Portmanteau theorem (Lemma \ref{lem:portmanteau} (1)). Equivalently, for all $a\ge0$, $\{\mu:\mu(F)\le a\}$ is weak* compact, in particular, $\{\mu:\mu(F)\le \ocap_\#(F)\}$ is weak* closed. Therefore $\mathcal{OM}_S$, being an intersection of weak* closed sets,
is weak* compact.
\begin{prop}\label{prop:characterize of flow capactiy}
Let $(X,\R)$ be a fixed-point free flow, and let $S$ be a global closed cross-section.
For every closed set $A\subset S$   we have 
\[
\ocap_\#(A)=\max_{\mu\in\mathcal{OM}_S}\mu(A).
\]
\end{prop}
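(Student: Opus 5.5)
The inequality $\sup_{\mu\in\mathcal{OM}_S}\mu(A)\le\ocap_\#(A)$ is immediate from the definition of $\mathcal{OM}_S$. The supremum is attained because $\mathcal{OM}_S$ is weak* compact and $\mu\mapsto\mu(A)$ is upper semicontinuous for closed $A$, by Lemma~\ref{lem:portmanteau}(1) applied after normalizing; the zero measure is handled trivially. The real content is therefore to construct, for a given closed $A\subset S$, a measure $\mu\in\mathcal{OM}_S$ with $\mu(A)\ge\ocap_\#(A)$. I would do this by a Hahn--Banach/minimax style argument, or, more concretely, by a Riesz-representation construction of a sublinear functional.

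Concretely, I would define for bounded Borel, or just continuous, $f\ge 0$ on $S$ the quantity
\[
p(f):=\lim_{T\to\infty}\frac1T\max_{x\in X}\sum_{0\le t\le T,\ tx\in S} f(tx).
\]
The limit exists by the same subadditivity argument as for $\ocap_\#$; Lemma~\ref{eq:from X to S} shows one may also maximize over $x\in S$. I would extend $p$ to all $f\in C(S)$ via $p(f):=p(f^+)$, or more cleanly use $p(f)=\lim\frac1T\max_x\sum f(tx)$ directly, which is sublinear and monotone on $C(S)$. Next I would approximate $1_A$ from above by continuous $g_k\downarrow 1_A$ with $0\le g_k\le 1_{A_{1/k}}$ and $g_k=1$ on $A$. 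Then $\ocap_\#(A)\le p(g_k)\le\ocap_\#(A_{1/k})$, so Lemma~\ref{lem:continuity of capacity} gives $p(g_k)\to\ocap_\#(A)$. By Hahn--Banach, for each $k$ there is a linear functional $\ell_k\le p$ with $\ell_k(g_k)=p(g_k)$. Monotonicity of $p$ makes $\ell_k$ positive: for $f\le 0$ we have $\ell_k(f)\le p(f)\le p(0)=0$. Hence $\ell_k$ is a measure $\mu_k$ with $\mu_k(S)\le p(1)=\ocap_\#(S)=K$.

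Next I would check that $\mu_k\in\mathcal{OM}_S$. For closed $F\subset S$ and continuous $h_j\downarrow 1_F$ with $h_j\le 1_{F_{1/j}}$, we get
\[
\mu_k(F)\le\inf_j\mu_k(h_j)\le\inf_j p(h_j)\le\inf_j\ocap_\#(F_{1/j})=\ocap_\#(F),
\]
again by Lemma~\ref{lem:continuity of capacity}. Finally I would take a weak* limit point $\mu$ of $(\mu_k)$. It lies in $\mathcal{OM}_S$ by compactness, and for each fixed $m$ the Portmanteau upper semicontinuity on the closed set $\overline{A_{1/m}}$ gives $\mu(\overline{A_{1/m}})\ge\limsup_k\mu_k(\overline{A_{1/m}})\ge\limsup_k\mu_k(g_k)=\ocap_\#(A)$ for $k\ge m$. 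Letting $m\to\infty$ yields $\mu(A)\ge\ocap_\#(A)$. Alternatively, one can skip the limit by using a single $\ell$ with $\ell(1_A)$ handled via the regularity argument.

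The main obstacle I expect is the passage between continuous test functions and indicators of closed sets: both in showing $p(g)\le\ocap_\#(\mathrm{supp}\,g)$ for $0\le g\le 1$, and in checking that the sums over return times behave well. Return times to $S$ are discrete, with spacing at least $2\eta$, so the sums are finite and bounded by $T/(2\eta)+1$; this makes $p$ finite and subadditive. The continuity Lemma~\ref{lem:continuity of capacity} is exactly what is needed to close the gap between $A$ and its neighborhoods, so I would rely on it in both places.
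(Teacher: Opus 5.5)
Your argument is correct, but it takes a different route from the paper.

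\textbf{The paper's construction.} The paper builds the extremal measure directly. It picks $x_n\in S$ whose orbit segment over $[0,n]$ nearly maximizes the number of visits to $A$ (via Lemma~\ref{eq:from X to S}). It then forms the normalized counting measures $\frac1n\sum_i\delta_{t_i^n x_n}$ over all return times to $S$ in $[0,n]$ and takes a weak* limit $\mu^A$. The lower bound $\mu^A(A)\ge\ocap_\#(A)$ then follows in one step from Portmanteau on the closed set $A$. Membership $\mu^A(F)\le\ocap_\#(F)$ follows from Portmanteau on the open sets $F_{1/k}$ together with Lemma~\ref{lem:continuity of capacity}.

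\textbf{Your construction.} You encode the capacity in the monotone sublinear functional $p$ on $C(S)$ and produce measures by Hahn--Banach and Riesz. Domination $\ell_k\le p$ gives $\mu_k(F)\le\ocap_\#(F)$ uniformly for every closed $F$, again through Lemma~\ref{lem:continuity of capacity}. Positivity and the mass bound $\mu_k(S)\le p(1)=\ocap_\#(S)$ come for free from monotonicity.

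\textbf{Trade-offs.} Since you only control $\ell_k$ on continuous $g_k\ge 1_A$, you need the extra limit in $k$ and the shrinking closed neighborhoods $\overline{A_{1/m}}$ to reach $\mu(A)\ge\ocap_\#(A)$. That step is handled correctly. The paper's empirical measures see $A$ directly and have a concrete meaning as return-time counting measures along orbit segments. Your route is non-constructive and never chooses orbits. It makes clear that the only inputs needed are monotonicity and sublinearity of the counting functional plus Lemma~\ref{lem:continuity of capacity}; Lemma~\ref{eq:from X to S} is not needed at all.

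\textbf{One small correction.} For signed $f$, the quantity $\max_x\sum_{0\le t\le T,\ tx\in S}f(tx)$ is subadditive in $T$ only up to an additive $\|f\|_\infty$, because the return time at the splitting point may be counted with a negative weight. So "the same subadditivity argument" does not literally apply to the direct formula. You can either use $p(f):=p(f^+)$, as you suggest, or define $p$ with $\limsup$. The $\limsup$ version is still sublinear and monotone, and that is all your argument uses.
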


\begin{proof}
By the definition of $\mathcal{OM}_S$, one has $\mu(A)\le \ocap_\#(A)$ for all $\mu\in\mathcal{OM}_S$,
and hence $\sup_{\mu\in\mathcal{OM}_S}\mu(A)\le \ocap_\#(A)$.
Since $\mathcal{OM}_S$ is weak* compact in $\M_{\le K}(S)$ and $\mu\mapsto \mu(A)$ is upper semicontinuous
for fixed closed set $A$, from the Portmanteau theorem (Lemma \ref{lem:portmanteau} (1)), the supremum is attained, so it suffices to find $\mu\in\mathcal{OM}_S$ with
$\mu(A)\ge \ocap_\#(A)$.

Fix a closed $A\subset S$. For each $n\in\N$, by Lemma~\ref{eq:from X to S}, we may choose $x_n\in S$ such that
\begin{equation}\label{eq:202616}
\frac1n\bigl|\{t\in[0,n]:t x_n\in A\}\bigr|
\ge
\frac1n\max_{x\in X}\bigl|\{t\in[0,n]:t x\in A\}\bigr|
-\frac{C}{n}.
\end{equation}
For $n\in\N$ and $B\subset S$, set
\[
H_n(B):=\{t\in[0,n]:t x_n\in B\}.
\]
Letting $n\to\infty$ in \eqref{eq:202616} yields
\begin{equation}\label{eq:H_n(A)}
\limsup_{n\to\infty}\frac{1}{n}\,|H_n(A)|=\ocap_\#(A).
\end{equation}
Also
\begin{equation}\label{eq:H_n(S)}
    \limsup_{n\to\infty}\frac{1}{n}\,|H_n(S)|\le \ocap_\#(S).
\end{equation}

We now construct a measure $\mu^A\in\mc{OM}_S$ from the sequence $\{x_n\}_{n\ge1}$ and the sets $H_n(S)$, $n\in\N$. To this end,
since $S$ has an injectivity time, the set $H_n(S)$ is finite for every $n\in\N$.
Write
\[
H_n(S)=\{t\in[0,n]:t x_n\in S\}=\{t_1^n<\cdots<t_{m_n}^n\},
\]
 and define
\[
\mu^A_n:=\frac1n\sum_{i=1}^{m_n}\delta_{t_i^n x_n}.
\]
Note that,  by \eqref{eq:H_n(S)}, for $n\in\N$ sufficiently large, one has $\mu^A_n\in \M_{\le K+1}(S)$.

Passing to a subsequence, we may assume    that  $\mu^A_n\to\mu^A$ in the weak* topology,
for some $\mu^A\in \M_{\le K+1}(S)$, as $\M_{\le K+1}(S)$ is weak* compact. Furthermore, by the Portmanteau theorem (Lemma \ref{lem:portmanteau} (2)) applied to $S$ which is open in itself, one has
\[\mu^A(S)=\liminf_{n\to\infty}\mu^A_n(S)\le K,\]
which implies that 
\[\mu^A\in \M_{\le K}(S). \]

We now prove    that  $\mu^A(A)\ge \ocap_\#(A)$.
Since $A$ is closed, the Portmanteau theorem (Lemma \ref{lem:portmanteau} (1)) gives
\begin{align*}
    \mu^A(A)\ge \limsup_{n\to\infty}\mu^A_n(A)
=\limsup_{n\to\infty}\frac1n\bigl|H_n(A)\bigr|\overset{\eqref{eq:H_n(A)}}{=} \ocap_\#(A).
\end{align*}

We complete the proof by proving $\mu^A(F)\le \ocap_\#(F)$ for all closed $F\subset S$.
Fix a closed $F\subset S$. For $k\in\N$ set $F_{1/k}:=\{x\in S:d(x,F)<1/k\}$, which is open in $S$.
By the Portmanteau theorem (Lemma \ref{lem:portmanteau} (2)),    we have
\begin{align*}
    \mu^A(F_{1/k})\le \liminf_{n\to\infty}\mu^A_n(F_{1/k})
=&\liminf_{n\to\infty}\frac1n\bigl|H_n(F_{1/k})\bigr|\\
\le& \liminf_{n\to\infty}\frac1n\max_{x\in X}\bigl|\{0\le t\le n:tx\in F_{1/k}\}\bigr|\\
=&\ocap_\#(F_{1/k}).
\end{align*}
Using outer regularity of $\mu^A$ and Lemma~\ref{lem:continuity of capacity},
letting $k\to\infty$ yields $\mu^A(F)\le \ocap_\#(F)$.
As $F\subset S$ was arbitrary, $\mu^A\in\mathcal{OM}_S$.
\end{proof}

\begin{lem}\label{lem:ocapsharp-subadd}
Let $(X,\R)$ be a fixed-point free flow, and let $S$ be a closed cross-section. Given subsets $A_i\subset S$, $i\in\N$, it holds   that
\[
\ocap_\#\Bigl(\bigcup_{i=1}^{\infty}A_i\Bigr)\le \sum_{i=1}^{\infty}\ocap_\#(A_i).
\]
\end{lem}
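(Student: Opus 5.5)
The plan is to work directly from the definition \eqref{eq:ocapf}, starting from a pointwise counting inequality for a single orbit segment. Write $A:=\bigcup_{i\ge1}A_i\subset S$ and, for $B\subset S$ and $T>0$,
\[
M_B(T):=\max_{x\in X}\bigl|\{0\le t\le T:tx\in B\}\bigr|.
\]
Since $S$ has an injectivity time $\eta$, the set $\{0\le t\le T: tx\in S\}$ is finite, of cardinality at most $T/(2\eta)+1$, uniformly in $x$. Hence $M_B(T)$ is finite for every $B\subset S$. Fix $x\in X$ and $T>0$. Every $t$ with $tx\in A$ has $tx\in A_i$ for at least one $i$, so
\[
\bigl|\{0\le t\le T: tx\in A\}\bigr|\le \sum_{i\ge1}\bigl|\{0\le t\le T: tx\in A_i\}\bigr|\le \sum_{i\ge 1} M_{A_i}(T).
\]
Taking the maximum over $x$ gives $M_A(T)\le\sum_i M_{A_i}(T)$. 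Dividing by $T$ yields
\[
\frac{M_A(T)}{T}\le \sum_{i\ge1}\frac{M_{A_i}(T)}{T}.
\]

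It then remains to let $T\to\infty$. On the left this produces $\ocap_\#(A)$, by the existence of the limit noted after \eqref{eq:ocapf}. On the right, for each fixed $i$ the term $M_{A_i}(T)/T$ tends to $\ocap_\#(A_i)$. For a finite union this already finishes the proof. For the countable union one must interchange $\lim_{T}$ with $\sum_i$, and the tool I would try for this is dominated convergence in $i$. There is a uniform bound $M_{A_i}(T)/T\le \frac{1}{2\eta}+\frac1T$, and the subadditivity of $T\mapsto M_{A_i}(T)$ (with $M_{A_i}(T)/T\downarrow$ along doubling times) makes it possible to compare $M_{A_i}(T)/T$ with $\ocap_\#(A_i)$ up to an error term.

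The main obstacle is exactly this interchange. Subadditivity gives $M_{A_i}(T)/T\ge \ocap_\#(A_i)$, which is the wrong direction for an upper bound, and the uniform bound $\frac1{2\eta}$ is not summable in $i$. So before closing the argument I would first test the step on a simple configuration: $A_i=\{t_ix_0\}$, singletons lying on a single non-periodic orbit that returns to $S$ with positive frequency. In that case $\ocap_\#(A_i)=0$ for every $i$, while $\ocap_\#(A)>0$. This indicates that the interchange cannot hold for arbitrary subsets. To complete the proof one needs an extra ingredient: either a summable tail estimate $\sum_{i>N}M_{A_i}(T)/T\to0$ as $N\to\infty$, uniformly in large $T$, or regularity of the $A_i$, such as closedness, which would allow passing through Proposition~\ref{prop:characterize of flow capactiy} and the countable subadditivity of the measures in $\mathcal{OM}_S$. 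I would pin down which of these the later applications actually supply, and prove the lemma under that condition.
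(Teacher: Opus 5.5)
Your diagnosis is correct: the statement is false as written, and the paper's proof breaks at exactly the step you flag. The paper's argument is your first paragraph: the pointwise count over the union is bounded by the sum of counts, then one maximizes over $x$ and divides by $T$. It then simply asserts that letting $T\to\infty$ gives the result, i.e.\ it exchanges $\lim_T$ with $\sum_i$ without comment. This is not a formality. Fatou's lemma only gives $\sum_i\ocap_\#(A_i)\le\liminf_T\sum_iM_{A_i}(T)/T$, which is the reverse direction, and your singleton example is a genuine counterexample.

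To make it airtight, three facts are needed.
\begin{enumerate}
\item If $p$ is not periodic, then for each $x$ there is at most one $t$ with $tx=p$, so $\ocap_\#(\{p\})=0$.
\item The injectivity time makes the set of return times of $x_0$ to $S$ discrete, hence countable.
\item If $[-\xi,\xi]S=X$, every time interval of length $2\xi$ contains a return, so $\ocap_\#(A)\ge1/(2\xi)$.
\end{enumerate}
Concretely, take the linear flow $t(x,y)=(x+t,y+\alpha t)$ on $\T^2$ with $\alpha\notin\Q$ and $S=\{0\}\times\T^1$. The set $A=\{(0,n\alpha):n\in\Z\}$ is a countable union of points of capacity zero, while $\ocap_\#(A)=1$. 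So only finite subadditivity is actually proved, and no argument can give the countable version for arbitrary subsets.

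Your repair plan needs one correction. Closedness of the individual $A_i$ cannot be the missing hypothesis, because your singletons are closed. Openness does not help either. In the torus example, take open arcs $U_n\ni(0,n\alpha)$ of length $\epsilon2^{-|n|-2}$. By unique ergodicity $\sum_n\ocap_\#(U_n)<\epsilon$, yet $\ocap_\#(\bigcup_nU_n)=1$.

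What works is closedness of the set being covered. Suppose $S$ is global, $E\subset S$ is closed, and $E\subset\bigcup_iA_i$ with the $A_i$ Borel. Take $\mu\in\mathcal{OM}_S$ attaining the maximum in Proposition~\ref{prop:characterize of flow capactiy}. Inner regularity and monotonicity of $\ocap_\#$ give $\mu(A_i)\le\ocap_\#(A_i)$, hence $\ocap_\#(E)=\mu(E)\le\sum_i\ocap_\#(A_i)$. When the $A_i$ are relatively open, compactness of $E$ plus finite subadditivity already suffices. Your uniform tail condition is also sufficient.

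For the later applications, the bound on $\ocap_\#(\{f_\epsilon\ge p/2^j\})$ in Proposition~\ref{prop:small enough} is of this closed type. That set is compact, and by Lemma~\ref{lem:value of fn} together with $F\subset\bigcup_nU_n$ it is covered by the relatively open $U_n$. By contrast, \eqref{eq:oF=0} is left unjustified, since the paper obtains it by applying the countable lemma to the open sets $U_n$.
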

\begin{proof}
For every $x\in X$ and $T>0$,
\[
\bigl|\{0\le t\le T:\ t x\in \cup_{i\in\N}A_i\}\bigr|\le \sum_{i=1}^{\infty}\bigl|\{0\le t\le T:\ t x\in A_i\}\bigr|.
\]
Taking the maximum over $x$, one   gets 
\begin{align*}
   \max_{x\in X} \bigl|\{0\le t\le T:\ t x\in \cup_{i\in\N}A_i\}\bigr|\le& \max_{x\in X} \left(\sum_{i=1}^{\infty}\bigl|\{0\le t\le T:\ t x\in A_i\}\bigr|\right)\\
   \le&\sum_{i=1}^{\infty}\max_{x\in X} \bigl|\{0\le t\le T:\ t x\in A_i\}\bigr|.
\end{align*}
Dividing by $T$ and letting $T\to\infty$ gives the desired inequality:
\[
\ocap_\#\Bigl(\bigcup_{i=1}^{\infty}A_i\Bigr)
\le
\sum_{i=1}^{\infty}\ocap_\#(A_i).
\qedhere\]
\end{proof}

\subsection{Proofs of Theorems ~\ref{thm:SFBPvsSBP_tau} and \ref{thm:SFBP_implies_mdim0}}\footnote{Precursors to Theorems \ref{thm:SFBPvsSBP_tau} and \ref{thm:SFBP_implies_mdim0} for aperiodic flows were first obtained  by Ruxi Shi and the second author.}

\begin{proof}[Proof of Theorem~\ref{thm:SFBPvsSBP_tau}]
(1) $\Rightarrow$ (2). Since $(X,\R)$ has SFBP, by \cite[Proposition 2.2]{burguet2019symbolic}, it admits a strongly isomorphic extension $\pi:(Y,\R)\rightarrow (X,\R)$  which is a suspension flow over a zero-dimensional
 space. In particular, $Y$ is finite dimensional (see e.g. \cite{MR54240}).  As  $(X,\tau)$ is aperiodic, so is $(Y, \tau)$. Thus Theorem \ref{thm:finite dienmsion and periodic point=>SBP} implies  $(Y, \tau)$ has  SBP. Therefore, by Theorem \ref{thm:kerr}, $(Y, \tau)$ admits a zero-dimensional strongly isomorphic extension $\rho:(Z,T)\rightarrow (Y, \tau)$. It follows that $\pi\circ \rho: (Z,T) \to (X,\tau)$ is strongly isomorphic by Lemma \ref{lem:strong_composition}. Finally,  Theorem \ref{thm:kerr} implies    that  $(X,\tau)$ has SBP.

(2) $\Rightarrow$ (1).   This follows from Lemma~\ref{lem:SBP-implies-periodic-zero-dim} and Lemma \ref{lem:SBPtau-implies-tau-aperiodic}.
\end{proof}

\begin{proof}[Proof of Theorem~\ref{thm:SFBP_implies_mdim0}]
Assume   that $(X,\tau)$ is aperiodic for some $\tau\in \R\setminus\{0\}$. By Theorem~\ref{thm:SFBPvsSBP_tau}, $(X,\tau)$ has SBP.
By \cite[Theorem~5.4]{LW}, $\mdim(X,\tau)=0$. Finally, by Proposition~\ref{prop:mdim time-1}, 
$\mdim(X,\R)=0$.
\end{proof}

\subsection{Examples} 
In general, a topological flow may have SFBP while its $\tau$-discretization fails to have SBP for all $\tau\in \R$; conversely, a flow may have the property that its $\tau$-discretization has SBP for some $\tau>0$ but the flow itself does not have SFBP. We present the following two examples to illustrate these possibilities.
\begin{exam}\label{ex1}  
Let $C$ be the Cantor set. Choose a continuous surjection $f:C\to[1,2]$ and let
$X:=S_f C$ be the suspension space over $(C,\id)$ under the roof function $f$.
Then \textbf{$(X,\R)$ has SFBP, but for every $\tau\in\R\setminus\{0\}$ the $\tau$-discretization
$(X,\tau)$ does not have SBP.}

Indeed, since $C$ is zero-dimensional, the t.d.s.\ $(C,\id)$ has SBP. Hence the suspension
flow $(S_f C,\R)$ has SFBP by \cite[Lemma~2.15]{burguet2019symbolic}. On the other hand,
for every $\tau\in\R$ we have $\dim(\per(X,\tau))=1$, and therefore $(X,\tau)$
does not have SBP by Theorem \ref{thm:SFBPvsSBP_tau}.
\end{exam}

\begin{exam}

Let $(X,T)$ be one of the aperiodic t.d.s. constructed by Dranishnikov and Levin (\cite{DL25}),
which satisfies $\mdim(X,T)=0$ but does not have the marker property and thus, by 
Proposition \ref{prop:appendix}, does not have  SBP either.  Let $Y=S_1X$ be the suspension flow over $(X,T)$ under the constant roof function $1$. Then $(Y,\R)$ is \textbf{an aperiodic flow $(X,\R)$ without SFBP such that the $\tau$-discretization has SBP iff $\tau\in \R\setminus \Q$}. Indeed $(Y,\R)$ is aperiodic because $(X,T)$ is aperiodic.
By \cite[Lemma~2.15]{burguet2019symbolic}, the suspension flow $(S_1X,\R)$ has SFBP
if and only if $(X,T)$ has SBP. Therefore $(Y,\R)$ does not have SFBP. Fix $\tau\in\R\setminus\Q$.
By Lemma~\ref{lem:important observation}, the $\tau$-discretization $(Y,\tau)$ has the marker property.
Note that by Proposition \ref{prop:mdim time-1},
$\mdim(Y,\tau)=|\tau|\mdim(Y,\R)=0$. 
For a t.d.s.\ with the marker property, mean dimension zero implies SBP. 
Consequently, $(Y,\tau)$ has SBP (\cite[Theorem A.3]{gutman2017embedding}).
For $\tau=0$, there is nothing to prove. We now prove that for every $\tau\in\Q\setminus\{0\}$, the system
$(Y,\tau)$ does not have SBP. Suppose, for a contradiction, that there
exists
$
\tau=p/q
$ with $p\in\Z\setminus\{0\},\ q\in\N,$
such that $(Y,\tau)$ has SBP. Since $(Y,\R)$ is aperiodic,
$(Y,\tau)$ is aperiodic. Hence, by
Proposition~\ref{prop:appendix}, $(Y,\tau)$ has the marker property.
By Proposition~\ref{prop:mult. marker} $(Y,p)=(Y,q\tau)$ has the marker property. Using the same proposition again we conclude
 $(Y,1)$ has the marker property. Since the marker property
passes to invariant subsystems, it follows that $(X,T)$ has the marker property, which is a contradiction.
\end{exam}

\begin{defn}
A fixed-point free topological flow $(X,\R)$ is said to have the \textbf{discrete  small flow boundary property (DSBP)}  if for any $\tau\neq 0$, the $\tau$-discretization  has SBP.     
\end{defn}
\begin{rem}
By Lemma \ref{lem:SBP-implies-periodic-zero-dim} and Lemma \ref{lem:SBPtau-implies-tau-aperiodic}, DSBP implies that all $\tau$-discretizations for $\tau\neq 0$ are aperiodic which implies that $(X,\R)$ is aperiodic.     
\end{rem}

\begin{ques}\label{quest:DSBP}
    Does there exist (an aperiodic) topological flow with DSBP but without SFBP?
\end{ques}

\section{Seamless separation}\label{sec:seamless separation}
In this section, we introduce the notion of a \textit{seamless separation} of two distinct points by a countable family of open sets. We then show how such a family can be used to construct a continuous function measurable with respect to the $\sigma$-algebra it generates. This provides a key ingredient for the subsequent construction of small flow-generated entropy factors that separate pairs of distinct points.

\begin{defn}\label{bl}
Let $X$ be a compact metric space and let $x,y\in X$ with $x\neq y$. We will say that a family $\mathcal U=\{U_n\}_{n\ge 0}$ of open subsets of $X$ \textbf{seamlessly separates $x$ from $y$} if $x\in U_0\setminus\bigcup_{n\ge1}U_n$, $y\notin\bigcup_{n\ge0}U_n$ and, for each $p\in\N \cup\{0\}$, the following conditions are fulfilled: 
\begin{enumerate}
	\item[$(A_p)$] $\partial U_p \subset U_{p+1}$,
	\item[$(B_p)$] for all $m>p+1$, $\partial U_p \cap U_m=\emptyset$.
\end{enumerate} 

\end{defn}

\begin{figure}[ht]
\centering
\begin{tikzpicture}[scale=1.15]

\def\r{2.2}
\def\e{0.35}
\def\d{0.18}
\coordinate (O) at (0,0);

\fill[blue!15] (O) circle (\r);

\fill[orange!30, even odd rule]
(O) circle ({\r+\e})
(O) circle ({\r-\e});

\fill[
pattern={Lines[angle=45,distance=2pt,line width=0.15pt]},
pattern color=black,
even odd rule
]
(O) circle ({\r-\e+\d})
(O) circle ({\r-\e-\d});

\fill[
pattern={Lines[angle=-45,distance=2pt,line width=0.15pt]},
pattern color=black,
even odd rule
]
(O) circle ({\r+\e+\d})
(O) circle ({\r+\e-\d});

\draw[line width=0.6pt] (O) circle (\r);

\node at (120:{\r+1.1}) {$\partial U_0$};
\draw[->] (120:{\r+0.9}) -- (120:\r);

\node at (25:{\r+1.4}) {$\partial U_1$};
\draw[->] (25:{\r+1.30}) -- (24:{\r+\e});
\draw[->] (27:{\r+1.26}) -- (34:{\r-\e});

\coordinate (xpt) at (210:0.45*\r);
\fill (xpt) circle (2pt);
\node[below left=2pt] at (xpt) {$x$};

\coordinate (ypt) at ({\r+\e+\d+1.15},1.10);
\fill (ypt) circle (2pt);
\node[right=3pt] at (ypt) {$y$};

\node at (0,0) {$U_0$};
\node at (160:\r) {$U_1$};
\node at (58:{\r-\e}) {$U_2$};
\node at (66:{\r+\e}) {$U_2$};

\end{tikzpicture}
\caption{A schematic picture of $U_0$, $U_1$, and $U_2$.}
\end{figure}

For $n\ge 0$, let $\mathcal P_n$ be the finite partition of $X$ generated by the sets
$U_0,\dots,U_{n}$, i.e.\ the partition into all nonempty sets of the form
\[
\bigcap_{j=0}^{n} A_j,\qquad A_j\in\{U_j,\, U_j^c\}.
\]
Set
\[
\mathcal P_\infty(\U):=\sigma\Bigl(\bigcup_{n\ge0}\mathcal P_n\Bigr),
\]
the $\sigma$--algebra generated by the partitions $\mathcal P_n$.
 Our goal is to separate $x$ from $y$ by a $\P_\infty(\U)$-measurable and continuous function 
$f:X\to[0,1]$, as stated below:

\begin{thm}\label{ltos}
Let $\mathcal U$ be a family of open subsets which seamlessly separate $x$ from $y$. Then there exists a continuous and $\P_{\infty}(\mc U)$-measurable function $f:X\to[0,1]$ such that $f(x)=1,f(y)=0$.
\end{thm}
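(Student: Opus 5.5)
The plan is to imitate the classical construction of the Cantor staircase. We build a sequence of step functions $f_n:X\to[0,1]$ such that each $f_n$ is constant on the atoms of $\mathcal P_n$ and takes dyadic values with denominator $2^n$. Each $f_{n+1}$ differs from $f_n$ only on $U_{n+1}$, and there by at most $2^{-(n+1)}$. Then $f=\lim f_n$ is a uniform limit, it is automatically $\P_\infty(\U)$-measurable, and the real work is to show it is continuous.

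\emph{Step 1: the recursion.} Alongside $f_n$ we carry an auxiliary function $a_n$, also $\mathcal P_n$-measurable, whose role is ``the value assigned on $U_n$ at stage $n$''.
\begin{itemize}
\item Start with $f_0:=1_{U_0}$ and $a_0:=1$.
\item Set $b_{n+1}:=f_n$ and $a_{n+1}:=\tfrac12(a_n+b_n)$, with $b_0:=0$. Thus $a_{n+1}$ is the midpoint between the value on the $U_n$-side and the value on the other side of $\partial U_n$.
\item Put $f_{n+1}:=a_{n+1}$ on $U_{n+1}$ and $f_{n+1}:=f_n$ off $U_{n+1}$.
\end{itemize}
Check at stage $1$: on $U_1$ we get $f_1=\tfrac12$; on $U_0\setminus U_1$ we get $1$; elsewhere $0$. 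At stage $2$, points of $U_2$ lying in $U_0\setminus U_1$ receive $\tfrac34$, and those outside $U_0\cup U_1$ receive $\tfrac14$, as in the picture.

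Routine checks:
\begin{itemize}
\item An induction gives $|a_{n+1}-f_n|\le 2^{-(n+1)}$ wherever it matters, so $\|f_{n+1}-f_n\|_\infty\le 2^{-(n+1)}$.
\item Each $f_n$ is $\mathcal P_n$-measurable, so the uniform limit $f$ is $\P_\infty(\U)$-measurable.
\item Since $x\in U_0\setminus\bigcup_{n\ge1}U_n$, we get $f_n(x)=1$ for all $n$. Since $y\notin\bigcup_n U_n$, we get $f_n(y)=0$ for all $n$. Hence $f(x)=1$ and $f(y)=0$.
\end{itemize}

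\emph{Step 2: locating the discontinuities.} By induction on $n$, I would prove that $f_n$ is locally constant off $\bigcup_{p\le n}\partial U_p$. I would also control the jumps at a point $z\in\partial U_p$. Condition $(A_p)$ gives $z\in U_{p+1}$. Condition $(B_p)$ gives $z\notin U_m$ for $m>p+1$. Hence $f_m(z)=f_{p+1}(z)$ for all $m\ge p+1$, so the value at $z$ freezes from stage $p+1$ on.

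\emph{Step 3: continuity of $f$ (the main obstacle).} The task is to show that near a point $z\in\partial U_p$ the oscillation of $f$ is small. The difficulty is that later sets $U_m$ may accumulate at $z$ without containing it. The intended argument runs as follows.
\begin{itemize}
\item $U_{p+1}$ is an open neighbourhood of $z$. On it, $f_{p+1}$ takes only the value $a_{p+1}$, which lies within $2^{-(p+1)}$ of both one-sided values of $f_p$ near $z$.
\item Afterwards, every modification changes values by at most $\sum_{m>p+1}2^{-m}=2^{-(p+1)}$.
\item Consequently, on a small neighbourhood $W$ of $z$, the oscillation of $f$ is at most a constant times $2^{-p}$.
\item To make this small, I would show that $z$ actually lies on $\partial U_{q}$ for arbitrarily large $q$, or else has a neighbourhood avoiding all late $\partial U_q$. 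In either case the oscillation at $z$ tends to $0$.
\item For points not in any $\partial U_p$, one uses instead that they are interior points for each finite stage. The estimate then reduces to the tail sum $\sum_{m>n}2^{-m}$.
\end{itemize}
If the one-sided bookkeeping in the recursion turns out to be ill-defined near points where different sides of $\partial U_p$ meet inside $U_{p+1}$, I would switch to defining $a_{n+1}$ explicitly on each atom of $\mathcal P_{n+1}$ meeting $U_{n+1}$. That is, $a_{n+1}$ would be the midpoint of the two adjacent dyadic values, with $(B_p)$ guaranteeing there are exactly two.
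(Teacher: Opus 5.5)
Your recursion is sound. One checks inductively that $a_{n+1}=\tfrac12(a_n+f_{n-1})$ is $\mathcal P_{n-1}$-measurable, not merely $\mathcal P_{n+1}$-measurable, and that $|a_{n+1}-f_n|=2^{-(n+1)}$ at every point. So by $(B_0),\dots,(B_{n-1})$ the function $f_{n+1}$ is locally constant on $U_{n+1}$, which even spares you the paper's reduction to a clean family.

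The genuine gap is in Step 3, at points $z\in\partial U_p$. Your estimate there (oscillation of $f$ at $z$ at most a constant times $2^{-p}$) does not improve once $p$ is fixed, and the dichotomy you propose to push it to $0$ fails on both sides.

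The first alternative never occurs. A point of $\partial U_p\cap\overline{U_m}$ with $m\ge p+2$ would lie outside $U_m$ by $(B_p)$, hence in $\partial U_m\subset U_{m+1}$ by $(A_m)$, contradicting $(B_p)$. Together with $z\in U_{p+1}$, this shows $z\notin\partial U_q$ for every $q>p$.

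The second alternative can fail, exactly in the situation you flagged. Take $X=[0,3]$ with $U_0=[0,1)$, $z=1$ and $U_1=(1/2,3/2)$. Let each $U_m$ ($m\ge2$) be a suitably tiny neighbourhood of $\partial U_{m-1}$ together with the interval $(1+4^{-m},1+2\cdot 4^{-m})$. Then $(A)$ and $(B)$ hold, yet $\partial U_m$ accumulates at $z$.

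Your Step 2 claim, that $f_n$ is locally constant off $\bigcup_{p\le n}\partial U_p$, is likewise too weak. It allows $f_n$ to stay discontinuous at $z$ for all $n\ge p$.

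What you are missing is the paper's invariant $(2_n)$ and $(3_n)$: $f_n$ is discontinuous only on $\partial U_n$, with jump exactly $2^{-n}$ there. The jumps along $\partial U_n$ are \emph{healed} at stage $n+1$, because $\partial U_n\subset U_{n+1}$ and $f_{n+1}$ is locally constant on $U_{n+1}$. Off $U_{n+1}$ nothing changes, and only $\partial U_{n+1}$ is new. Combined with $\|f-f_n\|_\infty\le 2^{-n}$, this gives $\limsup_{w\to z}f(w)-\liminf_{w\to z}f(w)\le 3\cdot 2^{-n}$ for every $z\in X$ and every $n$. So $f$ is continuous, with no case distinction and no need to control how the $U_m$ accumulate.

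In your own terms, the fix is short. For $z\in\partial U_p$ and $n\ge p+1$, the function $f_n=f_{p+1}=a_{p+1}$ is constant on some neighbourhood of $z$. This neighbourhood can be taken inside $U_{p+1}$ and disjoint from the closed set $\overline{U_{p+2}}\cup\dots\cup\overline{U_n}$, which misses $z$ by the observation above. Then the tail argument you already give for points outside $\bigcup_p\partial U_p$ applies verbatim.
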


\begin{rem}\label{rem1}
If $\partial U_n=\emptyset$ for some $n\ge1$, then Theorem~\ref{ltos} is immediate.
Indeed, $\partial U_n=\emptyset$ implies that $U_n$ is clopen.
Since $x\in U_0\setminus\bigcup_{m\ge1}U_m$ and $y\notin\bigcup_{m\ge0}U_m$, the set
$
U_0\setminus \bigcup_{m=1}^n U_m
$
is clopen, belongs to $\mathcal P_\infty(\U)$, contains $x$ and does not contain $y$.
Hence $f:=\mathbf{1}_{\,U_0\setminus \cup_{m=1}^n U_m}$ is continuous, $\mathcal P_\infty(\U)$--measurable, and satisfies $f(x)=1$, $f(y)=0$.
\end{rem}

\begin{rem}\label{rem2}
Note that since, by the condition $(B_{n-1})$, $U_{n+1}$ is disjoint from the boundaries of the atoms $P$ of $\P_{n-1}$, the sets $P\cap U_{n+1}$ are open for all $P\in \P_{n-1}$. 
\end{rem}

\begin{defn}\label{def1}
For each $n\ge 1$, an atom $P$ of $\P_{n-1}$ for which  $P\cap  \partial U_n\neq \emptyset$ is called \textbf{essential}. Otherwise it is called \textbf{inessential}. 
\end{defn}
\begin{rem}\label{rem:equiv. essential}
If $x\in \partial U_n$, then $x\in U_{n+1}$ (since $\partial U_n\subset U_{n+1}$), and hence,
in the partition $\P_{n+1}$, the point $x$ lies in an atom of the form
$
P\cap U_n^{c}\cap U_{n+1}
$
for some $P\in \P_{n-1}$.
\end{rem}
\begin{defn}\label{ess}
We say that the family $\U$ is \textbf{clean} if, for each $n\ge1$, all inessential atoms $P$ of $\P_{n-1}$, satisfy $P\cap U_{n+1}=\emptyset$. 
\end{defn}
\begin{rem}\label{rem:double star}
 By definition, for a clean family $\U$  we have
\[
U_{n+1}
=\bigcup_{\substack{P\in \P_{n-1}\\ P\ \text{essential}}}\bigl(P\cap U_{n+1}\bigr).
\]
\end{rem}
\begin{lem}\label{clean}
If $\U=\{U_n\}_{n=0}^\infty$ is  a family of open subsets which  seamlessly separates $x$ from $y$, then there exists a clean family
$\U'=\{U_n'\}_{n=0}^\infty$ such that for each $n\in\N$,
\[
U_n'\subset U_n,\qquad U_n'\in\P_\infty(\U),\qquad \partial U_n'\subset \partial U_n,
\]
and $\U'$ seamlessly separates $x$ from $y$.
\end{lem}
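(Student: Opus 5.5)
The plan is to \emph{prune} the sets $U_{n+1}$ recursively. At each step we discard the part of $U_{n+1}$ that lies in inessential atoms, where essentiality is measured with respect to the already pruned family. Concretely, set $U_0':=U_0$ and $U_1':=U_1$. Suppose $U_0',\dots,U_n'$ have been defined for some $n\ge1$, and let $\P_{n-1}'$ be the partition generated by $U_0',\dots,U_{n-1}'$. Put
\[
W_n:=\bigcup\bigl\{P\in\P_{n-1}' : P\cap\partial U_n'\neq\emptyset\bigr\},\qquad U_{n+1}':=U_{n+1}\cap W_n .
\]
Trivially $U_n'\subset U_n$. By induction each $U_n'$ is a finite Boolean combination of $U_0',\dots,U_{n-1}'$ and $U_n$, hence lies in $\P_\infty(\U)$. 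Since $x\in U_0'$ while $U_n'\subset U_n$ for all $n$, the conditions on $x$ and $y$ are inherited from $\U$.

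The main step, and the one I expect to be the real obstacle, is the inclusion $\partial U_{n+1}'\subset\partial U_{n+1}$, proved by induction on $n$. The set $W_n$ is a union of atoms of $\P_{n-1}'$, so $\partial W_n\subset\bigcup_{j\le n-1}\partial U_j'$, which is contained in $\bigcup_{j\le n-1}\partial U_j$ by the induction hypothesis. Therefore
\[
\partial U_{n+1}'\subset \partial U_{n+1}\cup\bigl(\overline{U_{n+1}}\cap\partial W_n\bigr)\subset \partial U_{n+1}\cup\Bigl(\overline{U_{n+1}}\cap\bigcup_{j\le n-1}\partial U_j\Bigr).
\]
For $j\le n-1$ we have $n+1>j+1$, so $(B_j)$ gives $U_{n+1}\cap\partial U_j=\emptyset$. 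Writing $\overline{U_{n+1}}=U_{n+1}\cup\partial U_{n+1}$, the last term is therefore contained in $\partial U_{n+1}$. This proves the inclusion.

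Openness of $U_{n+1}'$ then follows by the argument of Remark~\ref{rem2}. Each atom $P$ of $\P_{n-1}'$ has boundary inside $\bigcup_{j\le n-1}\partial U_j$, which is disjoint from the open set $U_{n+1}$. Hence $P\cap U_{n+1}=\Int(P)\cap U_{n+1}$ is open, and so is the finite union $U_{n+1}'$.

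Next we verify seamless separation for $\U'$. Condition $(B_p)$ is inherited, since $\partial U_p'\cap U_m'\subset\partial U_p\cap U_m=\emptyset$ for $m>p+1$. For $(A_p)$, take $z\in\partial U_p'$. The atom of $\P_{p-1}'$ containing $z$ meets $\partial U_p'$, so it is essential and lies in $W_p$. Moreover $z\in\partial U_p\subset U_{p+1}$ by $(A_p)$ for $\U$, so $z\in U_{p+1}'$. For $p=0$ no pruning occurred, so $(A_0)$ holds directly: $\partial U_0'=\partial U_0\subset U_1=U_1'$.

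Finally, cleanliness of $\U'$ holds by construction. An inessential atom of $\P_{n-1}'$ (relative to $\U'$) is disjoint from $W_n$, since distinct atoms are disjoint. Hence it is disjoint from $U_{n+1}'$.
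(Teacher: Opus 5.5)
Your proof is correct and is essentially the paper's construction: you prune $U_{n+1}$ to the union of the sets $P\cap U_{n+1}$ over those atoms $P$ of the partition generated by $U_0',\dots,U_{n-1}'$ that meet $\partial U_n'$, and then check inductively $\partial U'_{n+1}\subset\partial U_{n+1}$, openness, $(A_p)$, $(B_p)$ and cleanliness. The differences are cosmetic. The paper gets the boundary inclusion by noting that $U'_{n+1}$ is relatively clopen in $U_{n+1}$, whereas you use the boundary-of-intersection formula together with $(B_j)$; and you explicitly measure essentiality against $U_n'$, which is what cleanliness of $\U'$ requires (the paper's phrase ``with respect to $U_n$'' should be read that way).
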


\begin{proof}
The family $\U'=\{U'_n\}_{n\ge0}$ is defined inductively. We begin by setting $U'_0=U_0$ and $U'_1=U_1$. Suppose that for some $n\ge 1$ we have defined $\P_\infty(\mc U)$-measurable open sets $U'_0,U'_1,\dots,U'_n$ so that the family 
$$
\U^{(n)}=\{U'_0,U'_1,\dots,U'_n, U_{n+1},U_{n+2},U_{n+3},\dots\}
$$ 
satisfies conditions $(A_p)$ and $(B_p)$ for all $p\ge 0$ (this is true for $n=0$ and $n=1$). Let $\P'_{n-1}$ denote the (finite) partition generated by the sets $U'_0,U'_1,\dots,U'_{n-1}$ and their complements. By Remark~\ref{rem2}, for all atoms $P'$ of $\P'_{n-1}$, the set $P'\cap U_{n+1}$ is open.   We let 
\begin{equation}\label{eq:number2}
    U'_{n+1}=\bigcup\bigl\{P'\cap U_{n+1}: P'\in \P'_{n-1}\text{  is essential with respect to }U_n\bigr\}.
\end{equation} 
Clearly, $U'_{n+1}\in \P_{\infty}(\mc U)$. Define 
$$\U^{(n+1)}=\{U'_0,U'_1,\dots,U'_n,U'_{n+1},U_{n+2},U_{n+3},\dots\}.$$
As we only replaced $U_{n+1}$ by $U'_{n+1}$, we only need to verify  the conditions $(A_n)$ and $(A_{n+1})$ for $\U^{(n+1)}$, and  the  condition $ (B_i)$ for $i=0,1, \ldots, n-1$ and $n+1$. Trivially, $U'_{n+1}$ is open and so is $U_{n+1}\setminus U'_{n+1}$  as it equals $\bigcup\{P'\cap U_{n+1}\}$, where $P'$ ranges over all inessential atoms of $\P'_{n-1}$. Thus $U'_{n+1}\subset U_{n+1}$ is relatively clopen in $U_{n+1}$, and hence $\partial U'_{n+1}\subset\partial U_{n+1}$. As $\partial U_{n+1}\subset U_{n+2}$, $\U^{(n+1)}$ satisfies  the  condition $(A_{n+1})$. By construction it also satisfies  the  condition $(A_n)$. As $\partial U'_{n+1}\subset\partial U_{n+1}$, $\U^{(n+1)}$ satisfies   the  condition $(B_{n+1})$. As $U'_{n+1}\subset U_{n+1}$, it satisfies  the condition $(B_i)$ for $i=0,1, \ldots, n-1$.

Define 
$$\U'=\{U_n':n\ge0\}.$$ 
Note that this family  consists of $\P_\infty(\U)$-measurable sets and  is clean by comparing Remark
\ref{rem:equiv. essential} and \eqref{eq:number2}.
The fact  $U'_{n}\subset U_{n}$ for all $n\geq 0$ implies that 
both $x$ and $y$ do not belong to $\bigcup_{n\ge1}U_n'$. Since $U_0'= U_0$, we have
$x\in U_0'\setminus\bigcup_{n\ge1}U_n'$ and $y\notin\bigcup_{n\ge0}U_n'$.
Hence $\U'$ seamlessly separates $x$ from $y$.
\end{proof}
Now we begin to prove Theorem \ref{ltos}.
\begin{proof}[Proof of Theorem \ref{ltos}] 
By Lemma~\ref{clean}, we may assume w.l.o.g. that $\U$ is clean. Moreover, we may further assume that  $\U$ satisfies
\begin{enumerate}
	\item[(C)] for all $n\ge 0$, $ \partial U_n \neq\emptyset$.
\end{enumerate} 
Indeed, if (C) does not hold, then by Remark~\ref{rem1}, the theorem holds trivially.

We will inductively define a \sq\ of functions $(f_n)_{n\ge0}$, which is a Cauchy sequence in the uniform topology. Thus, the function $f$ can be defined as $\lim_{n\to\infty}f_n$.
Let $f_0=1_{U_0}$. Fix some $n\ge 0$. Suppose we have defined $f_n$ which has the following properties:
\begin{enumerate}
	\item[$(1_n)$] $f_n$ is constant on the atoms of $\P_n$ and assumes values of the form $k2^{-n}$ with $k\in\{0,1,\dots,2^n\}$,
	\item[$(2_n)$] the set of discontinuities of $f_n$ equals $\partial U_n$, and 
	\item[$(3_n)$] at every $z\in\partial U_n$ the function has a jump by $2^{-n}$, i.e.,
	$$
	\limsup_{ w\to z}f_n( w)-\liminf_{ w\to z}f_n( w)=2^{-n}.
	$$
	\item[$(4_n)$]$f_n(x)=1, \ \ f_n( y)=0$.
\end{enumerate}
Trivially, these conditions are satisfied for $n=0$.
We will now define $f_{n+1}$. First  define
\begin{equation}\label{eq:def f_n}
    (f_{n+1})_{|U_{n+1}^c}:=(f_n)_{|U_{n+1}^c}.
\end{equation}

This part of the definition alone implies that $f_{n+1}$ fulfills  the condition  $(4_{n+1})$ as $x,y\notin \bigcup_{m\geq 1} U_m$.
It remains\footnote{Note that by {\bf the conditions} $(C)$ and $(A_n)$, $U_{n+1}\neq \emptyset$.} to define $f_{n+1}$ on $U_{n+1}$. By Remark \ref{rem:double star} and Remark \ref{rem2}, $U_{n+1}$ splits into open sets $P\cap U_{n+1}$, where $P$ ranges over the essential atoms of $\P_{n-1}$. Let us focus on one essential atom~$P$ and let $z\in P\cap U_{n+1}$ be a boundary point of $U_n$. This implies that 
\begin{enumerate}
	\item[$(a_n)$] the jump of $f_n$ at $z$, i.e., $\limsup_{w\to z}f_n( w)-\liminf_{w\to z}f_n( w)=2^{-n}$, is achieved for the restriction of $f_n$ to $P\cap U_{n+1}$. 
\end{enumerate}
Observe also that
\begin{enumerate}
	\item[$(b_n)$] $f_n$ assumes constant values on the sets $P\cap U_n\cap U_{n+1}$ and $P\cap U_n^c\cap U_{n+1}$,\footnote{Indeed, the sets $P\cap U_n$ and $P\cap U_n^c$ are atoms of $\P_n$.} 
\end{enumerate}

The conditions $(1_n)$, $(a_n)$ and  $(b_n)$ imply that $f_n$ assumes on $P\cap U_{n+1}$ exactly two values: $k2^{-n}$ and $(k+1)2^{-n}$, for some $k\in\{0,1,\dots,2^n-1\}$. We now define 
\begin{equation}\label{eq:n_average}
\begin{split}
     (f_{n+1})_{|(P\cap U_{n+1})}&:\equiv \frac{1}{2}\big((f_n)_{|P\cap U_n\cap U_{n+1}}+(f_n)_{|P\cap U_n^c\cap U_{n+1}}\big) \\
     &=k2^{-n}+2^{-(n+1)}.
\end{split}
\end{equation}
 Note that this average of two values  has the form $k'2^{-(n+1)}$ where $k'\in\{0,1,\dots,2^{n+1}\}$ is odd. This concludes the definition of $f_{n+1}$.

We need to show that $f_{n+1}$ satisfies   the conditions  $(1_{n+1})$,  $(2_{n+1})$ and  $(3_{n+1})$. On each atom of $\P_{n+1}$ of the form $P'\cap U_{n+1}^c$, where $P'$ is an atom of $\P_{n}$, the function $f_{n+1}$ coincides with $f_n$, hence it is constant (because $f_n$ is constant on $P'$). The other atoms of $\P_{n+1}$ have the form $P\cap U_n\cap U_{n+1}$ or $P\cap U_n^c\cap U_{n+1}$, where $P$ is an essential atom of $\P_{n-1}$. Above, for a fixed essential atom $P\in \P_{n-1}$, the function 
$f_{n+1}$ was defined as the same constant of the form $k2^{-n-1}$ with $k\in\{0,1,\dots,2^{n+1}\}$ on each of these two atoms of $\P_{n+1}$. Therefore $(1_{n+1})$ is fulfilled. 

Next, observe that $f_{n+1}$ has no discontinuities in the interior of $U_{n+1}^c$, because on this set it coincides with $f_n$ whose only discontinuities are at $\partial U_n$, and this boundary is covered by $U_{n+1}$. Likewise, $f_{n+1}$ has no discontinuities within $U_{n+1}$ because this set splits into open sets $P\cap U_{n+1}$ with $P$ being an essential atom of $\P_{n-1}$, and on each of these sets $f_{n+1}$ has been defined as a constant. So, the only discontinuities of $f_{n+1}$ may occur at the boundary points of $U_{n+1}$. 

Let $z\in\partial U_{n+1}$. This point can be approached by points from both $U_{n+1}^c$ and $U_{n+1}$. On $U_{n+1}^c$, $f_{n+1}$ assumes the same values as $f_n$ and these are dyadic rationals of the form $k2^{-n}$. On $U_{n+1}$ the values of $f_{n+1}$ have been defined as dyadic rationals of the form $k'2^{-(n+1)}$ with $k'$ odd. Thus $f_{n+1}$ has a discontinuity at $z$, which completes the proof of   the condition  $(2_{n+1})$. 

In order to prove  the condition  $(3_{n+1})$ we need to look closer at the values of $f_{n+1}$ near $z\in\partial U_{n+1}\subset U_{n+2}$. Let $P$ be an atom of $\P_n$ which contains $z$. By Remark \ref{rem2}  and Remark \ref{rem:double star}, $P\cap U_{n+2}$ is an open neighborhood of $z$, which implies that $z$ is an internal point of $P$. Now, $P$ splits into $P\cap U_{n+1}^c$ and $P\cap U_{n+1}$ which are atoms of $\P_{n+1}$. As $z\in \mathrm{Int}(P)\cap \partial U_{n+1} $, it holds  that 
\[\emptyset\neq \mathrm{Int}(P)\cap U_{n+1} \subset P\cap U_{n+1}\text{ and }\emptyset\neq \mathrm{Int}(P)\cap U_{n+1}^c \subset  P\cap U_{n+1}^c.\]
Since $f_{n+1}$ is constant on these atoms by $(1_{n+1})$, it suffices to show that these constants differ by $2^{-(n+1)}$. Let us thus define
$$
v_c:=(f_{n+1})_{|P\cap U_{n+1}^c}\,\,\,\, v:=(f_{n+1})_{|P\cap U_{n+1}}.
$$
Note that on $P\cap U_{n+1}^c$, $f_{n+1}=f_n$.  As $f_n$ is constant on $P$,
$$
v_c=(f_n)_{P\cap U_{n+1}^c}.    
$$
Write $P=P'\cap V_n$ where  $P'\in\P_{n-1}$ and $V_n=U_n$ or $V_n=U^c_n$. In particular $P\subset P'$ and by \eqref{eq:n_average}, $v=(f_{n+1})_{|(P'\cap U_{n+1})}$ obeys the following formula
\begin{equation*}
\begin{split}
v &= \frac{1}{2} \Big( (f_n)_{|P'\cap U_n\cap U_{n+1}} + (f_n)_{|P'\cap U_n^c\cap U_{n+1}} \Big) \\
\text{(since } \{U_n,U^c_n\} = \{V_n,V^c_n\}) \quad
  &= \frac{1}{2} \Big((f_n)_{|P'\cap V_n\cap U_{n+1}} + (f_n)_{|P'\cap V_n^c\cap U_{n+1}} \Big) \\
\text{(since } P=P'\cap V_n) \quad
  &= \frac{1}{2} \Big((f_n)_{|P\cap U_{n+1}} + (f_n)_{|P'\cap V_n^c\cap U_{n+1}} \Big)\\
\text{(since } f_n \text{ is constant on }P\in \P_n) \quad
  &= \frac{1}{2} \Big(v_c + (f_n)_{|P'\cap V_n^c\cap U_{n+1}} \Big)
\end{split}
\end{equation*}
Moreover by the explanation before \eqref{eq:n_average}, $v_c$ and $(f_n)_{|P'\cap V_n^c\cap U_{n+1}}$ differ by $2^{-n}$. Thus $v$ and $v_c$ differ by $2^{-(n+1)}$ as desired. This ends the proof of   the condition  $(3_{n+1})$.

\begin{figure}[h]
    \centering
    \includegraphics[width=1.0\textwidth]{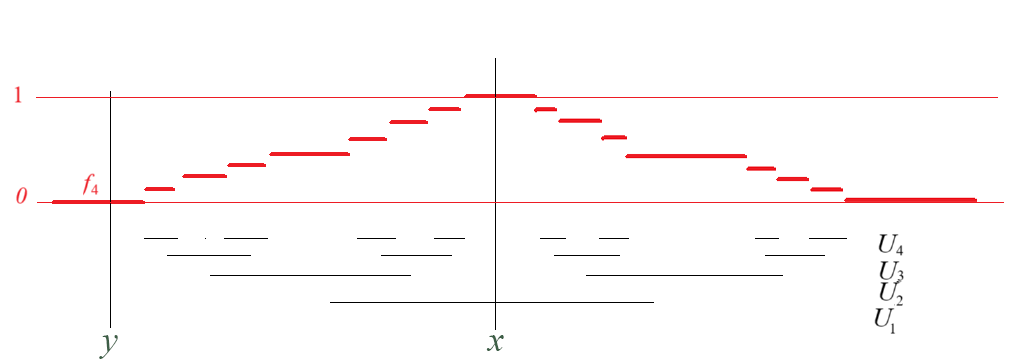} 
    \caption{An illustration of the construction} 
    \label{fig:urysohn}  
\end{figure}

In the last step of the proof we define $f=\lim_{n\to\infty} f_n$.  Observe that this limit exists and is uniform. Indeed, by construction, we have 
\begin{equation}\label{uni}
\|f_n-f_{n+1}\|_{\infty}\le 2^{-(n+1)},\text{ for all }n\in\N,
\end{equation}
where $\|h\|_{\infty}:=\sup_{x\in X}\{|h(x)|\}$. Thus, the \sq\ $(f_n)_{n\ge0}$ is uniformly Cauchy, and hence uniformly convergent to some limit function $f$. Being the pointwise limit of $\P_{\infty}(\mc U)$-measurable functions, $f$ is $\P_{\infty}(\mc U)$-measurable. 
Obviously,   the  condition $(4_n)$ for all $n\in\N$  implies  that $f(x)=1$ and $f(y)=0$. It remains to show that $f$ is continuous. As a consequence of \eqref{uni}, 
we have 
\begin{equation}\label{uni1}
\|f_n-f\|_{\infty}\le 2^{-n},\text{ for all }n\in\N.
\end{equation} 
For brevity, let us use the following notation
$$
\overline f(z) = \limsup_{w\to z}f(w) \text{ \ and \ }\underline f(z) = \liminf_{w\to z}f(w),\text{ for  }z\in X.
$$
Then, for all $z\in X$ and $n\ge 1$, we obtain, using \eqref{uni1},   the condition  $(3_n)$ and~\eqref{uni1}: 
\begin{align*}
0 \le \overline f(z)-\underline f(z)
&= \bigl(\overline f(z)-\overline{f_n}(z)\bigr)
 + \bigl(\overline{f_n}(z)-\underline{f_n}(z)\bigr)
 + \bigl(\underline{f_n}(z)-\underline f(z)\bigr) \\
&\le 3\cdot 2^{-n}.
\end{align*}
Letting $n\to\infty$, it follows that $\overline f(z) - \underline f(z)=0$, and so $f$ is continuous at $z$.
\end{proof}

\begin{defn}\label{def:associated}
    Given a clean family $\U$ which  seamlessly separates two distinct points, we refer to $\{f_n\}_{n=0}^\infty$ and $f$,  constructed  in the proof of Theorem \ref{ltos}, as the \textbf{associated functions} of $\U$.
\end{defn}
\begin{lem}\label{lem:value of fn}
     Given a clean family $\U$ which  seamlessly separates two distinct points, let $\{f_n\}_{n=0}^\infty$ be the associated functions of $\U$. Then for all $n\in\N$, we have
     \begin{equation}\label{eq:2026118}
         f_n(X)\subset\left\{0,\frac{1}{2^n},\ldots,\frac{2^n-1}{2^n},1\right\}.
     \end{equation}
    Furthermore, for every $x\in X$, every \(n\ge0\), every \(j\in\{0,1,\ldots,n\}\), and every odd \(p\in\mathbb N\), the following hold:
    \begin{enumerate}
        \item[$(\mathrm{I}_n)$]  if  $f_n(x)=\frac{p}{2^j}$, then $x\in U_j$;
        \item[$(\mathrm{II}_n)$]   if  $f(x)=\frac{p}{2^j}$, then $x\in U_j\cup U_{j+1}.$
    \end{enumerate}
\end{lem}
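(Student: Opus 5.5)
The value constraint \eqref{eq:2026118} is simply a restatement of $(1_n)$ from the proof of Theorem~\ref{ltos}, so there is nothing to prove there. I would prove $(\mathrm{I}_n)$ by induction on $n$, and then deduce $(\mathrm{II}_n)$ by tracking how the values $f_m(x)$ evolve along the fixed point $x$ as $m\to\infty$.

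\emph{Proof of $(\mathrm{I}_n)$.} For $n=0$ we have $f_0=1_{U_0}$. The only value of the form $p/2^0$ with $p$ odd is $1$, and $f_0(x)=1$ exactly when $x\in U_0$.

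For the inductive step, suppose $f_{n+1}(x)=p/2^j$ with $p$ odd and $j\le n+1$. There are two cases.
\begin{enumerate}
\item If $x\in U_{n+1}$, then by \eqref{eq:n_average} we have $f_{n+1}(x)=k'/2^{n+1}$ with $k'$ odd. The representation of a dyadic rational in the form odd$/2^j$ is unique, so $j=n+1$, and indeed $x\in U_{n+1}$.
\item If $x\notin U_{n+1}$, then $f_{n+1}(x)=f_n(x)$ by \eqref{eq:def f_n}. This value lies in $2^{-n}\Z$, so uniqueness of the reduced form forces $j\le n$. Now $(\mathrm{I}_n)$ gives $x\in U_j$.
\end{enumerate}

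\emph{Proof of $(\mathrm{II}_n)$: setup.} Fix $x$ with $f(x)=p/2^j$, $p$ odd, and suppose towards a contradiction that $x\notin U_j\cup U_{j+1}$. Let $M=\{m\ge 1: x\in U_m\}$. From the construction:
\begin{itemize}
\item $f_{m}(x)=f_{m-1}(x)$ whenever $m\notin M$;
\item for $m\in M$, $f_m(x)$ is odd$/2^m$ and $|f_m(x)-f_{m-1}(x)|=2^{-m}$.
\end{itemize}

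If $M\cap[j+2,\infty)=\emptyset$, then $f(x)=f_{j+1}(x)$. Applying $(\mathrm{I}_{j+1})$ then gives $x\in U_j$, a contradiction.

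Otherwise, let $m_0$ be the least element of $M$ with $m_0\ge j+2$. Then $f_{m_0}(x)=q/2^{m_0}$ with $q$ odd, and
\[
f(x)=f_{m_0}(x)+\sum_{i\in M,\, i>m_0}\epsilon_i 2^{-i},\qquad \epsilon_i\in\{\pm1\}.
\]
The tail has absolute value at most $2^{-m_0}$. If the inequality is strict, then $f(x)$ lies strictly between two consecutive multiples of $2^{-m_0}$. Since $j<m_0$, such a number cannot equal $p/2^j$, which is the contradiction we want.

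\emph{Main obstacle.} It remains to rule out the extremal case, where $M\supset(m_0,\infty)$ and all $\epsilon_i$ share one sign. This is the step I expect to be hardest, and here I would use the geometry of the family.

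For $i\in M$ with $i+1\in M$, the averaging rule \eqref{eq:n_average} says $f_{i+1}(x)$ is the midpoint of the two values of $f_i$ on $P\cap U_{i+1}$, where $P$ is the essential atom of $\P_{i-1}$ containing $x$. I would show that a constant sign $\epsilon_{i+1}=\epsilon_{i+2}$ forces $x$ to lie in the atom $P\cap U_i\cap U_{i+1}$ twice in a row in a way that places $x$ near $\partial U_i$. Concretely, the value on the $U_i$-side and the value on the $U_i^c$-side must be approached consistently. Combined with cleanness and $(B_i)$, which gives $\partial U_i\cap U_{i+2}=\emptyset$, this should be incompatible with $x\in U_i\cap U_{i+1}\cap U_{i+2}$ holding for all large $i$.

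If this direct route fails, my fallback would be to prove the stronger statement: for each $i$, every point of $U_{i+1}\cap U_{i+2}$ has $f$-value strictly inside the open interval determined by the two neighbouring values of $f_i$. I would establish this by induction along the same averaging scheme. It immediately excludes the endpoint value $p/2^j$ and so closes the extremal case.
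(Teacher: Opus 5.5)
Your treatment of \eqref{eq:2026118} and your induction for $(\mathrm{I}_n)$ are correct and match the paper. The gap is in $(\mathrm{II}_n)$. The extremal case, where every $i>m_0$ lies in $M$ and all $\epsilon_i$ have the same sign, is exactly where your tail bound is not strict. In that case your arithmetic does not exclude $f(x)=(q\pm1)/2^{m_0}$, which is a multiple of $2^{1-m_0}$ and could equal $p/2^j$. You leave this case open: the ``geometric'' route is only a heuristic, and the fallback ``stronger statement'' is asserted without proof. So as written, $(\mathrm{II}_n)$ is not established.

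There is also a small slip. When the tail is strictly smaller than $2^{-m_0}$, $f(x)$ lies strictly between $(q-1)/2^{m_0}$ and $(q+1)/2^{m_0}$. These are consecutive multiples of $2^{1-m_0}$, not of $2^{-m_0}$, and $f(x)=q/2^{m_0}$ is possible. The conclusion survives because $j\le m_0-2$.

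The difficulty comes from comparing $f(x)$ with $f_{m_0}(x)$, which lives at the finest scale. Compare instead with $f_{j+1}(x)$, which equals $f_{m_0-1}(x)$ in your notation. By \eqref{uni1}, $|f(x)-f_{j+1}(x)|\le 2^{-(j+1)}$. Since $f_{j+1}(x)\in 2^{-(j+1)}\Z$, this forces $f_{j+1}(x)\in\{(2p-1)/2^{j+1},\,p/2^{j},\,(2p+1)/2^{j+1}\}$. Then $(\mathrm{I}_{j+1})$ gives $x\in U_j$ in the middle case and $x\in U_{j+1}$ otherwise. This is the paper's proof, and it needs no case analysis. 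In contrapositive form: if $x\notin U_j\cup U_{j+1}$, then $f_{j+1}(x)=f_{j-1}(x)\in 2^{1-j}\Z$ (resp.\ $f_1(x)=0$ if $j=0$). That value is at distance at least $2^{-j}>2^{-(j+1)}$ from $p/2^j$.

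If you prefer to keep your framework, the extremal case is also easy to rule out directly. No appeal to $(B_i)$ or to the position of $x$ relative to $\partial U_i$ is needed. Suppose $i,i+1\in M$, and let $P\in\P_{i-1}$ be the atom containing $x$. In \eqref{eq:n_average}, the second value is that of $f_i=f_{i-1}$ on $P\cap U_i^c\cap U_{i+1}$, and $f_{i-1}$ is constant on $P$. Hence $f_{i+1}(x)=\tfrac12\bigl(f_i(x)+f_{i-1}(x)\bigr)$, i.e.\ $\epsilon_{i+1}=-\epsilon_i$. So the signs alternate along any run of consecutive elements of $M$ and cannot be constant on $(m_0,\infty)$.
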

\begin{proof}
   The first statement is clear from the construction. We now proceed  to prove the second statement by induction. For $n=0$, this claim is trivial. Assume that the condition $(\mathrm{I}_n)$ holds.
   We now prove the condition $(\mathrm{I}_{n+1})$. 
   
If $j < n + 1$, then by \eqref{eq:n_average},  on $U_{n+1}$, the function $f_{n+1}$ takes values of the form $\frac{k}{2^{n+1}}$ for some odd integer $0 \le k \le 2^{n+1}$. It follows that $x \in U_{n+1}^c$, and hence, by \eqref{eq:def f_n}, we have $f_{n+1}(x) = f_n(x)$. The conclusion then follows from the induction hypothesis.

If $j = n + 1$, then $f_{n+1}(x) \notin  \left\{0, \frac{1}{2^n}, \ldots, \frac{2^n - 1}{2^n}, 1\right\}$. Thus, by \eqref{eq:def f_n} and \eqref{eq:2026118}, we conclude that $x \in U_{n+1}$. Thus,  the condition $(\mathrm{I}_{n+1})$ holds.

Now we establish the condition  $(\mathrm{II}_n)$. Assume that $f(x)=\frac{p}{2^j}$ for some $j\in\{0,1,\ldots,n\}$ and some odd number $p\in\N$. By \eqref{uni1}, 
\[|f_{j+1}(x)-f(x)|\le \|f_{j+1}-f\|_\infty\le \frac{1}{2^{j+1}}.\]
Since by \eqref{eq:2026118},
\[
f_{j+1}(x)\in \left\{0,\frac{1}{2^{j+1}},\ldots,\frac{2^{j+1}-1}{2^{j+1}},1\right\},
\]
it follows that 
$$f_{j+1}(x)\in\{\frac{p}{2^{j}},\frac{2p-1}{2^{j+1}},\frac{2p+1}{2^{j+1}}\}.$$
Applying  the conditions  $(\mathrm{I}_j)$ and $(\mathrm{I}_{j+1})$, we obtain $x\in U_j\cup U_{j+1}$.
\end{proof}

\section{Small flow-generated entropy factors}\label{sec:small entropy factor}
In this section, we prove that if a fixed-point free topological flow $(X,\R)$ has SFBP, then   the  t.d.s. $(X,\tau)$ has the property that any two distinct points in $X$ can be distinguished by a factor with arbitrary small flow-generated entropy. The proof  uses  seamless families and involves the construction of  \textit{dynamical Cantor staircase functions} which are of independent interest (see Definition \ref{defn:dyn_Cantor}).

\subsection{Construction of a clean seamless family}\label{sec:construction of a clean}
 Fix any $w,w'\in X$. Since $(X,\R)$ has no fixed points, by Proposition \ref{prop:existence of complete cross-section}, there is a global closed cross-section
$S$ with injectivity time $\eta>0$. We may assume, w.l.o.g. that $w,w'\in S$. Indeed, if $w'\notin S$, since $S$ is closed the set
$\{t\in\R:\ tw'\in S\}$ is closed and does not contain $0$; hence there exists $\delta>0$
such that $[-\delta,\delta]w'\cap S=\emptyset$.
Replacing $S$ by $S\cup\{w'\}$ and shrinking the injectivity time if necessary (since $S$ is closed and $w'\notin S$ gives a positive time-gap to $S$ along the orbit of $w'$), we may assume that $w'\in S$; we keep the notation $\eta>0$ for the resulting injectivity time.

Our first goal is to construct a clean family $\mathcal U=\{U_n\}_{n\ge0}$ of relatively open subsets of $S$
which seamlessly separates $w$ from $w'$ (in the sense of Definition~\ref{bl}),
and then apply Theorem~\ref{ltos} to obtain a continuous function $f:S\to[0,1]$ with $f(w)=1$ and $f(w')=0$.

Let $a:=2\ocap_\#(\{w\})$.\footnote{If $w$ is aperiodic, then $a=0$. Note that it is possible $a\gg 1$.}
Fix $\epsilon\in(0,\eta)$. If $a>0$, we further assume $\epsilon<a$.
As $\xp$ has SFBP and $\ocap_\#(\{w\})= a/2$, by Lemma \ref{lem:key1} and Lemma \ref{lem:key2}, we may choose a subset $U_0\subset S$ with $U_0= \ip U_0$ such that $w\in U_0$, $w'\notin \overline{U_0}$, 
$
    \ocap_\#(\pp U_0)=0 \text{ and } \ocap_\#(U_0)<a/2+\epsilon/4<a.
$
Recall that $\partial_S$ denotes the boundary in the relative topology of $S$.
By Proposition~\ref{prop:subspace-vs-flow-boundary},
we may  assume that $\overline{U}_0\cap\pp S=\emptyset$, and so $\pp U_0=\partial_S U_0$.

Suppose inductively that we have constructed $U_0,\ldots,U_{n-1}\subset S$ such that 
\begin{enumerate}\label{label:construction}
    \item [($A_{n-1}$)] $\partial_S U_k\subset U_{k+1}$ for $k=0,\ldots,n-2$;
    \item [($B_{n-1}$)] for  all $0\le k\le n-2$, and  $k+2\le m\le n-1$, $\partial_S U_k\cap \overline{U_m}=\emptyset$;
    \item[($C_{n-1}$)]  $\ocap_\#(\partial_S U_{j})=0$, for $j=0,1,\ldots,n-1$;
    \item [($D_{n-1}$)] $\ocap_\#( U_{0})<a$ and $\ocap_\#( U_{j})<\frac{\epsilon}{2^{2j+2}}$, for $j=1,\ldots,n-1$.
\end{enumerate}

From ($C_{n-1}$), $\ocap_\#(\partial_S U_{n-1})=0$, which together with Lemmas \ref{lem:key1} and \ref{lem:key2}, implies that there exists a relatively open subset $U_n\subset S$ with $\overline{U}_n\cap \pp S=\emptyset$ such that $\partial_SU_{n-1}\subset U_n$, $\ocap_\#(\partial_S U_n)=0$, and $\ocap_\#( U_n)<\frac{\epsilon}{2^{2n+2}}$. Thus,  the conditions ($A_{n}$), ($C_{n}$) and ($D_{n}$)  hold. Moreover, by   the condition  ($B_{n-1}$), we can require that for  all $j=0,1,\ldots,n-2$, $\partial_S U_j\cap \overline{U_n}=\emptyset,$ which are exactly the additional conditions which together with   the condition  ($B_{n-1}$) show that  the condition  ($B_{n}$) holds.

Iterating the procedure above, we obtain a family $\U=\{U_n\}_{n=0}^\infty\subset \ip(S)$ which seamlessly separates $w$ from $w'$ (This is witnessed by  the conditions  $(A_n)$ and $(B_n)$ for all $n\in\N$).
By Lemma~\ref{clean}, we may replace $\U$ with a clean family $\U'=\{U_n'\}_{n=0}^\infty$ which seamlessly separates $w$ from $w'$, such that
\[
U_n'\subset U_n
\quad\text{and}\quad
\partial_S U_n'\subset \partial_S U_n
\qquad\text{for all } n\in\N.
\]
In particular, $\U'$ still satisfies   the conditions  $(A_n)$--$(D_n)$ for every $n\in\N$.
For simplicity of notation, we relabel $\U'$ as $\U$ in the remainder of the proof.
 
Let $\{f_n\}_{n\ge0}$ and $f_\epsilon=\lim_{n\to\infty}f_n$ be the associated functions of $\mc U$ (see Definition \ref{def:associated}).
Thus, $f_\epsilon:S\to[0,1]$ is continuous, $\P_\infty(\mathcal U)$-measurable, and satisfies
\[
f_\epsilon(w)=1,\qquad f_\epsilon(w')=0.
\]
Since for all $n\ge0$, $\overline{U_n}\cap \pp S=\emptyset$, we also have $\pp(S)\cap \bigcup_{n\ge0}U_n=\emptyset$. From Lemma \ref{lem:value of fn}, it follows from $f_n(x)>0$  that $x\in \cup_{j=0}^\infty U_j$. Thus, $f_n=0$ on $\pp S$ for all $n$ and therefore
\begin{equation}\label{eq:boundary is 0}
    (f_\epsilon)_{|_{\pp S}}=0.
\end{equation}
\subsection{Dynamical Cantor staircase functions}

Recall the definition of $S$ and $0<\epsilon<\eta$ from the beginning of Subsection \ref{sec:construction of a clean}. Let $L_\eta:=[-\eta,\eta]S$ be the  flow-box associated to $S$ (see Subsection~\ref{sec:flow-box} for the definition).
 Let $D:[0,1]\to[0,1]$ be the Cantor
(devil's) staircase function with $D(0)=0$ and $D(1)=1$ (see Figure \ref{fig:cantor-function}). Denote by $K$ the standard Cantor set.
We now describe its well known-properties.
\begin{figure}[t]
\centering
\begin{tikzpicture}
\begin{axis}[
  width=0.92\linewidth,
  height=0.55\linewidth,
  xmin=0, xmax=1, ymin=0, ymax=1,
  axis lines=left,
  xlabel={$s$}, ylabel={$D(s)$},
  xtick={0,1/9,2/9,1/3,2/3,7/9,8/9,1},
xticklabels={$0$,$1/9$,$2/9$,$1/3$,$2/3$,$7/9$,$8/9$,$1$},,
  ytick={0,1/4,1/2,3/4,1},
  yticklabels={$0$,$\frac{1}{4}$,$\frac{1}{2}$,$\frac{3}{4}$,$1$},
  ticklabel style={font=\small},
  label style={font=\small},
  clip=true
]

\addplot[no marks, thick] coordinates {
(0/1,0/1) (1/81,1/16) (2/81,1/16)
(1/27,1/8) (4/81,1/8) (5/81,1/8)
(2/27,1/8) (7/81,3/16) (8/81,3/16)
(1/9,1/4) (10/81,1/4) (11/81,1/4)
(4/27,1/4) (13/81,1/4) (14/81,1/4)
(5/27,1/4) (16/81,1/4) (17/81,1/4)
(2/9,1/4) (19/81,5/16) (20/81,5/16)
(7/27,3/8) (22/81,3/8) (23/81,3/8)
(8/27,3/8) (25/81,7/16) (26/81,7/16)
(1/3,1/2) (28/81,1/2) (29/81,1/2)
(10/27,1/2) (31/81,1/2) (32/81,1/2)
(11/27,1/2) (34/81,1/2) (35/81,1/2)
(4/9,1/2) (37/81,1/2) (38/81,1/2)
(13/27,1/2) (40/81,1/2) (41/81,1/2)
(14/27,1/2) (43/81,1/2) (44/81,1/2)
(5/9,1/2) (46/81,1/2) (47/81,1/2)
(16/27,1/2) (49/81,1/2) (50/81,1/2)
(17/27,1/2) (52/81,1/2) (53/81,1/2)
(2/3,1/2) (55/81,9/16) (56/81,9/16)
(19/27,5/8) (58/81,5/8) (59/81,5/8)
(20/27,5/8) (61/81,11/16) (62/81,11/16)
(7/9,3/4) (64/81,3/4) (65/81,3/4)
(22/27,3/4) (67/81,3/4) (68/81,3/4)
(23/27,3/4) (70/81,3/4) (71/81,3/4)
(8/9,3/4) (73/81,13/16) (74/81,13/16)
(25/27,7/8) (76/81,7/8) (77/81,7/8)
(26/27,7/8) (79/81,15/16) (80/81,15/16)
(1/1,1/1)
};

\addplot[densely dashed] coordinates {(1/3,1/2) (2/3,1/2)};
\node[font=\small, anchor=south] at (axis cs:1/2,1/2) {$I_{1,1}$};

\addplot[densely dashed] coordinates {(1/9,1/4) (2/9,1/4)};
\node[font=\small, anchor=south] at (axis cs:1/6,1/4) {$I_{2,1}$};

\addplot[densely dashed] coordinates {(7/9,3/4) (8/9,3/4)};
\node[font=\small, anchor=south] at (axis cs:5/6,3/4) {$I_{2,3}$};

\end{axis}
\end{tikzpicture}
\caption{The Cantor staircase function $D$.}
\label{fig:cantor-function}
\end{figure}
\begin{lem}\label{lem:cantor-level}
For every $j\ge1$ and every odd integer $p$ with $1\le p\le 2^j-1$,
 there exist disjoint open intervals $I_{j,p}\subset[0,1]\setminus K$ such that
\begin{equation}\label{eq:zero}
    [0,1]\setminus K=\bigcup_{j=1}^\infty \bigcup_{p\text{ odd}}I_{j,p},
\end{equation}
and
\[I_{j,p}=D^{-1}\left(\frac{p}{2^j}\right)\quad\text{ with }\quad\mathrm{Leb}\big(I_{j,p}\big)=3^{-j}.\]
In addition, $D(s)=0$ iff $s=0$, and $D(s)=1$ iff $s=1$.
\end{lem}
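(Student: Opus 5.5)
I will prove Lemma~\ref{lem:cantor-level} directly from the standard middle-thirds construction of $K$ and of $D$.

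\textbf{Construction of the gaps.} Write $K=\bigcap_{j\ge0}K_j$, where $K_0=[0,1]$ and $K_j$ is obtained from $K_{j-1}$ by deleting the open middle third of each of its $2^{j-1}$ closed intervals, each of length $3^{-(j-1)}$. The intervals deleted at stage $j$ are $2^{j-1}$ disjoint open intervals, each of length $3^{-j}$. They lie strictly between consecutive intervals of $K_j$, ordered from left to right.

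\textbf{Labelling and the values of $D$.} List the $2^j$ intervals of $K_j$ from left to right. Recall the description of $D$ as the uniform limit of the piecewise linear functions $D_j$, where $D_j$ increases linearly by $2^{-j}$ across each interval of $K_j$ and is constant on each complementary gap. Between the $i$-th and $(i+1)$-th intervals of $K_j$ (for $1\le i\le 2^j-1$), $D_j$ therefore takes the value $i2^{-j}$. The gaps created at earlier stages have values with smaller denominators, so the gaps created exactly at stage $j$ are those with $i=p$ odd. Label such a gap $I_{j,p}$. Since $D_{j'}=D_j$ on $[0,1]\setminus \mathrm{Int}K_j$ for all $j'\ge j$, we get $D\equiv p/2^j$ on $I_{j,p}$. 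The count also matches: there are exactly $2^{j-1}$ odd $p$ in $[1,2^j-1]$, and each gap has length $3^{-j}$. Hence $[0,1]\setminus K=\bigcup_j\bigcup_{p\text{ odd}}I_{j,p}$, the union is disjoint, and $\mathrm{Leb}(I_{j,p})=3^{-j}$.

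\textbf{Equality $I_{j,p}=D^{-1}(p/2^j)$.} We have $D^{-1}(p/2^j)\supset I_{j,p}$, and $D$ is monotone, so the preimage is an interval containing $I_{j,p}$. By continuity it contains the closure $\overline{I_{j,p}}$. This is the main subtle point: the endpoints of $I_{j,p}$ lie in $K$, so as stated the lemma should be read with $I_{j,p}$ replaced by its closure, or equivalently as $I_{j,p}=\Int D^{-1}(p/2^j)$. I would note this and prove $D^{-1}(p/2^j)=\overline{I_{j,p}}$.

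To show nothing more lies in the preimage, take $s$ to the right of $\overline{I_{j,p}}$. The point $s$ either lies in, or lies past, some later gap. It then lies beyond a stage-$m$ subinterval of $K_m$ that starts at the right endpoint of $I_{j,p}$. Across that subinterval $D$ increases by $2^{-m}>0$, which follows from the self-similarity $D(s/3)=D(s)/2$ and $D(2/3+s/3)=1/2+D(s)/2$. Hence $D(s)>p/2^j$. The argument for $s$ to the left is symmetric.

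\textbf{Endpoint values.} The same self-similarity gives $D(3^{-m})=2^{-m}>0$. By monotonicity, $D(s)>0$ for every $s>0$, and symmetrically $D(s)<1$ for every $s<1$. This proves the final claim that $D(s)=0$ iff $s=0$ and $D(s)=1$ iff $s=1$.
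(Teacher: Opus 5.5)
The paper gives no proof of this lemma: it introduces $D$ and $K$ and lists these facts as ``well-known properties'' of the Cantor staircase function. Your argument is the standard middle-thirds one and it is correct, so it supplies what the paper omits.

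More importantly, your caveat is right. The identity $I_{j,p}=D^{-1}(p/2^j)$ is false as literally stated for open intervals. Since $D$ is continuous and constant on $I_{j,p}$, one has $D^{-1}(p/2^j)=\overline{I_{j,p}}$; for instance $D^{-1}(1/2)=[1/3,2/3]$. The correct form is $I_{j,p}=D^{-1}(p/2^j)\setminus K=\Int D^{-1}(p/2^j)$. The discrepancy is harmless for the paper. The lemma is only used through the decomposition \eqref{eq:zero}, to see that $g_\epsilon$ is dyadic-valued off $K_\epsilon$. It is also used through Lebesgue measures of level sets, such as $\mathrm{Leb}(E_{j,p})=2\epsilon/3^j$ and $\mathrm{Leb}(E_{0,1})=0$, and these do not see the two endpoints.

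One step of your write-up could be stated more crisply. To show $D(s)>p/2^j$ for $s>b$, where $b$ is the right endpoint of $I_{j,p}$, proceed as follows:
\begin{itemize}
\item Pick $m\ge j$ with $3^{-m}\le s-b$.
\item Note that $[b,b+3^{-m}]$ is an interval of $K_m$.
\item Since $D=D_m$ off $\Int K_m$, you get $D(s)\ge D(b+3^{-m})=p/2^j+2^{-m}$.
\end{itemize}
This replaces the loose phrase ``lies in, or lies past, some later gap.''
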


Define a continuous function $g_\epsilon:[-\eta,\eta]\rightarrow [0,1]$ by
\[
g_\epsilon(t)=
\begin{cases}
1-D\!\left(\dfrac{|t|}{\epsilon}\right), & t\in[-\epsilon,\epsilon],\\[0.6ex]
0, & t\in[-\eta,\eta]\setminus[-\epsilon,\epsilon].
\end{cases}
\]
Then, by \eqref{eq:zero}, except on the scaled cantor set $K_\epsilon:=\{t\in[-\epsilon,\epsilon]:|t|/\epsilon\in K\}$ of Lebesgue measure zero in $ [-\epsilon, \epsilon] $, the function $ g_\epsilon $ takes values only in the set
\[
\left\{ \frac{j}{2^n} : n \in \mathbb{N},\ j \in \{0,1,\ldots,2^n\} \right\}.
\]

Define $\Theta_\epsilon:X\to[0,1]$ by
\[
\Theta_\epsilon(z)=
\begin{cases}
\min\{f_\epsilon(x_z),\, g_\epsilon(t_z)\}, & z=t_zx_z\in L_\eta,\\[2mm]
0, & z\notin L_\eta,
\end{cases}
\]
where $(t_z,x_z)\in[-\eta,\eta]\times S$ is the unique pair with $z=t_zx_z$. This is well-defined by the injectivity of the flow-box chart.

Moreover, by Remark \ref{rem:equiv def. of flow b}, one has the decomposition
\[
\partial L_\eta = \bigl((-\eta,\eta)\pp S\bigr)\ \sqcup\ (\eta S)\ \sqcup\ (-\eta S).
\]
Since $g_\epsilon$ is continuous on $[-\eta,\eta]$ and vanishes on a neighborhood of $\{\pm\eta\}$, and since $f_\epsilon$ is continuous on $S$ with
$(f_\epsilon)_{|\pp S}\equiv 0$ (see~\eqref{eq:boundary is 0}), it follows that $(\Theta_\epsilon)_{|_{L_\eta}}$ vanishes on a neighborhood of $\partial L_\eta$ in $L_\eta$.
Consequently, $\Theta_\epsilon$ is continuous on $X$, as by definition, it vanishes outside $L_\eta$.

\subsection{An entropy estimate}
Define
\[
F:=\Bigl\{x\in S:\ f_\epsilon(x)\neq \frac{k}{2^n}\ \text{for all }k,n\in\N\cup\{0\}\Bigr\}.
\]
We claim that if $x\in F$, then $x\in U_n$ for infinitely many $n$.
Indeed, assume $x\notin U_n$ for all $n\ge N$.
By \eqref{eq:def f_n}, $f_{n+1}=f_n$ on $U_{n+1}^c$, hence $f_n(x)=f_N(x)$ for all $n\ge N$.
Since $f_n\to f$ uniformly, we have $f_\epsilon(x)=\lim_{n\to\infty}f_n(x)=f_N(x)$.
But by Lemma \ref{lem:value of fn}, $f_N(x)$ is a dyadic rational of the form $k/2^N$, contradicting $x\in F$.
This proves the claim and yields
\begin{equation}\label{eq:relation 1}
    F\subset \bigcap_{N=1}^\infty \bigcup_{n\ge N} U_n.
\end{equation}
By countable subadditivity (Lemma~\ref{lem:ocapsharp-subadd}) together with the monotonicity of
$\ocap_\#$, we obtain 
\begin{equation}\label{eq:oF=0}
    \ocap_\#(F)=0.
\end{equation}

For $j\ge 0$ and odd $p$ with $1\le p\le 2^j-1$, define\footnote{In particular, $A_{0,1}=\{z\in X:\Theta_\epsilon(z)=1\}$.}
\[
A_{j,p}:=\left\{z\in X:\ \Theta_\epsilon(z)=\frac{p}{2^j}\right\},
\qquad
A_0:=\{z\in X:\ \Theta_\epsilon(z)=0\}
\]
and
\[\tilde{A}:=\Bigl\{z\in X:\ \Theta_\epsilon(z)\neq \frac{k}{2^n}\ \text{for all }k,n\in\N\cup\{0\}\Bigr\}.
\]

Note that $z\in \widetilde{A}$ implies that $x_z\in F$ or $t_z\in K_\epsilon$. Therefore,
\[
\widetilde A \subset \bigcup_{t\in[-\epsilon,\epsilon]} tF \ \cup\ \bigcup_{t\in K_\epsilon} tS.
\]
Combining this with~\eqref{eq:relation 1}, we obtain
\begin{equation}\label{eq:relation2}
\widetilde A \subset 
\bigcup_{t\in[-\epsilon,\epsilon]} t\Bigl(\bigcap_{N=1}^\infty \bigcup_{n\ge N} U_n\Bigr)
\ \cup\ 
\bigcup_{t\in K_\epsilon} tS,
\end{equation}
where $\cup_{t\in [-\epsilon,\epsilon]}t(\cup_{N=1}^\infty\cap_{n\ge N}U_n)$ and $\cup_{t\in K_\epsilon}tS$ are analytic (as continuous images of Borel sets in a Polish space), and so   measurable (see e.g. \cite[Theorem 2.9]{G03}).

\begin{thm}\label{thm:dyn_Cantor}
Let $(X,\R)$ be a topological flow with SFBP. The continuous function $\Theta_\epsilon:X\to[0,1]$ constructed above has the following properties:
\begin{enumerate}
    \item 
$\Theta_\epsilon(w)\neq \Theta_\epsilon(w').$
    \item 
    For every flow-invariant probability measure $\mu$, it holds $$\mu(\widetilde A)=0.$$
\end{enumerate}

\end{thm}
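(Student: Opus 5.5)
Part (1) is a direct evaluation in the flow-box chart. Since $w,w'\in S$, we have $t_w=t_{w'}=0$ and $x_w=w$, $x_{w'}=w'$. Then $g_\epsilon(0)=1-D(0)=1$, so $\Theta_\epsilon(w)=\min\{f_\epsilon(w),1\}=1$ and $\Theta_\epsilon(w')=\min\{f_\epsilon(w'),1\}=0$, using $f_\epsilon(w)=1$ and $f_\epsilon(w')=0$ from Theorem~\ref{ltos}.

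For (2), fix a flow-invariant $\mu$. By \eqref{eq:relation2} it suffices to show that both sets on the right-hand side are $\mu$-null; they are measurable, as noted before the statement. Both will be handled by the standard disintegration of a flow-invariant measure on a flow box. For $B\subset S$ Borel and $I\subset[-\eta,\eta]$ Borel, flow invariance gives $\mu(I B)=\mathrm{Leb}(I)\,\nu(B)$ for a finite measure $\nu$ on $S$. To get this, define $\nu(B):=\frac{1}{2\eta}\mu([-\eta,\eta]B)$ and check that $\mu([a,b]B)$ is invariant under translating $[a,b]$ inside $[-\eta,\eta]$ (shift by a time $s$ and use $\mu\circ s^{-1}=\mu$ together with injectivity of the chart). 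An additive, translation-invariant, monotone set function on intervals is a multiple of length, and one extends from intervals to Borel $I$ by the $\pi$--$\lambda$ theorem. This identity is the step needing the most care.

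The second set is immediate from this product structure: $\mu(K_\epsilon S)=\mathrm{Leb}(K_\epsilon)\,\nu(S)=0$, since $K_\epsilon$ is a scaled Cantor set of Lebesgue measure zero.

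For the first set, let $G:=\bigcap_N\bigcup_{n\ge N}U_n\subset S$, which is Borel. Then $\mu([-\epsilon,\epsilon]G)=2\epsilon\,\nu(G)$, so it suffices to show $\nu(G)=0$. Here I relate $\nu$ to $\ocap_\#$. For Birkhoff-generic $z$ (take $\mu$ ergodic first, then use the ergodic decomposition), the frequency of hits of the orbit of $z$ in $B$ per unit time equals $\mu([-\eta,\eta]B)/(2\eta)=\nu(B)$. For open $B$ this gives $\nu(B)\le \ocap_\#(B)$, since the hitting counts are bounded by the maximum over $x\in X$. Alternatively, for closed sets one can invoke Proposition~\ref{prop:characterize of flow capactiy} after showing that $\nu$ lies in $\mathcal{OM}_S$ in that sense.

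Then $\nu(G)\le \nu\bigl(\bigcup_{n\ge N}U_n\bigr)\le\sum_{n\ge N}\ocap_\#(U_n)<\sum_{n\ge N}\epsilon 2^{-2n-2}\to0$ by $(D_n)$. Hence $\nu(G)=0$, in agreement with \eqref{eq:oF=0}.

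The main obstacle is the inequality $\nu(U)\le\ocap_\#(U)$. I would prove it by approximating $\mathbf 1_{[-\eta,\eta]U}$ and applying the pointwise ergodic theorem: for $\mu$-a.e.\ $z$, $\frac1T\int_0^T\mathbf 1_{[-\eta,\eta]U}(tz)\,dt\to 2\eta\nu(U)$. The left side is at most $\frac{2\eta}{T}\bigl(|\{t\in[-\eta,T+\eta]:tz\in U\}|\bigr)$, which in the limit is bounded by $2\eta\,\ocap_\#(U)$.
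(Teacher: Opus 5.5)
Your proof is correct. Part (1) matches the paper. For (2) you follow the paper's skeleton: start from \eqref{eq:relation2} and kill the Cantor-time piece and the limsup piece separately. Your key inequality $\nu(U)\le\ocap_\#(U)$ is proved exactly as in the paper's Lemma~\ref{lem:orbitcap-flowbox}, via the pointwise ergodic theorem and counting hits, which are $2\eta$-separated.

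The difference is in packaging. You first establish the exact product structure $\mu(IB)=\mathrm{Leb}(I)\,\nu(B)$ on the flow box, using translation invariance, the monotone Cauchy equation and a $\pi$--$\lambda$ extension. You then run the tail estimate for $\nu$ inside $S$.

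The paper never needs this disintegration. Its Lemma~\ref{lem:orbitcap-flowbox} gives $\mu(EA)\le\mathrm{Leb}(E)\,\ocap_\#(A)$ for an arbitrary Borel $E\subset[-\eta,\eta]$ in one stroke, from $\mathrm{Leb}\bigl((R_A(x)+E)\cap[0,L]\bigr)\le\mathrm{Leb}(E)\,N_A(x;L)$. It then applies Borel--Cantelli to $\mu$ on the thickened sets $[-\epsilon,\epsilon]U_n$ in $X$.

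The paper's route is shorter and avoids the measure-extension step. Yours yields an equality rather than an inequality. Consequently the piece $K_\epsilon S$ is null by flow invariance alone, with no appeal to $\ocap_\#$. It also makes transparent that Lemma~\ref{lem:orbitcap-flowbox} is just ``product structure plus the case $E=[-\eta,\eta]$''.

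Two minor remarks. First, your ergodic-theorem argument for $\nu(B)\le\ocap_\#(B)$ works for every Borel $B\subset S$; openness plays no role. Second, the suggested alternative via Proposition~\ref{prop:characterize of flow capactiy} is not an independent route. Checking $\nu\in\mathcal{OM}_S$ already requires the same inequality, and that proposition concerns only closed sets.
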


\begin{proof}
(1) Since $f_\epsilon(w)=1$, $f_\epsilon(w')=0$ and $w,w'\in S$, it follows that 
   \[\Theta_\epsilon(w)=1\neq 0=\Theta_\epsilon(w').\]
 
(2) By \eqref{eq:relation2} we have
\begin{equation}\label{eq:2026116}
 \mu(\widetilde A)\ \le\
\mu(\cup_{t\in[-\epsilon,\epsilon]} t(\cap_{N=1}^\infty\cup_{n\ge N}U_n))
\ +\
\mu(\cup_{t\in K} tS).   
\end{equation}

We first estimate the first item.
Note that 
\[\mu(\cup_{t\in [-\epsilon,\epsilon]}t(\cap_{N=1}^\infty\cup_{n\ge N}U_n))\le\mu(\cap_{N=1}^\infty\cup_{n\ge N}\cup_{t\in [-\epsilon,\epsilon]}tU_n). \]
By the Borel-Cantelli lemma, to prove that $\mu(\cup_{t\in [-\epsilon,\epsilon]}t(\cap_{N=1}^\infty\cup_{n\ge N}U_n))=0$, it suffices to prove 
\[\sum_{n=1}^\infty \mu(\cup_{t\in [-\epsilon,\epsilon]}tU_n)<\infty.\]
By Lemma~\ref{lem:orbitcap-flowbox} (stated below), with $E=[-\epsilon,\epsilon]$ and $A=U_n\subset S$, we have
\[
\mu(\cup_{t\in[-\epsilon,\epsilon]} tU_n)
\ \le\ \mathrm{Leb}([-\epsilon,\epsilon])\cdot \ocap_\#(U_n)
\ \le\ 2\epsilon\cdot \frac{\epsilon}{2^{2n+2}}.
\]
Hence the above series converges, and thus
\[
\mu(\cup_{t\in[-\epsilon,\epsilon]} t(\cap_{N=1}^\infty\cup_{n\ge N}U_n))=0.
\]

Since $\mathrm{Leb}(K)=0$, Lemma~\ref{lem:orbitcap-flowbox} (stated below), yields 
\[
\mu(\cup_{t\in K} tS)=0.
\]
Then \eqref{eq:2026116} ensures $\mu(\widetilde A)=0$.
\end{proof}

\begin{defn}\label{defn:dyn_Cantor}
Let $(X,\R)$ be a topological flow. A real-valued continuous function on $X$ which has the properties listed in Theorem \ref{thm:dyn_Cantor} is called a \textbf{dynamical  Cantor staircase function}\footnote{A similar definition can be given for $\Z$-t.d.s. One can show that an analogous theorem to Theorem \ref{thm:dyn_Cantor} holds for $\Z$-t.d.s.}.  
\end{defn}

Let
\[
\alpha:=\{A_0\}\cup\{A_{j,p}: j\in\N\cup\{0\},\ p\text{ odd},\ 1\le p\le 2^j-1\}.
\]
Then $\alpha\cup\{\widetilde A\}$ is a measurable partition of $X$.
We now compute the Shannon entropy of the partition $\alpha\cup\{\tilde{A}\}$.
By Theorem~\ref{thm:dyn_Cantor}(2), the set $\widetilde A$ is null.
In particular, for such $\mu$ one has
\[
H_\mu(\alpha)=H_\mu(\alpha\cup\{\widetilde A\}),
\]
so in all entropy computations with the partition $\alpha$ we may ignore $\widetilde A$.

For convenience, we further assume that $\epsilon<4/9$.
\begin{prop}\label{prop:small enough}
Let $\mu$ be a flow-invariant probability measure.
Then it holds that $\mu(A_{0,1})=0$, and for every $j\ge 1$ and every odd $p$ with $1\le p\le 2^{j}-1$, one has
\[
\mu(A_{j,p})\le \frac{(4a+1)\epsilon}{2\cdot3^{j}}.
\]
Moreover,
\[
H_\mu(\alpha)\ \le\ C\epsilon\bigl(1+\log(1/\epsilon)\bigr),
\]
for some constant $C>0$ not depending on $\epsilon$.
\end{prop}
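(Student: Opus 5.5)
The approach is to reduce every measure estimate to Lemma~\ref{lem:orbitcap-flowbox} (stated below): for $A\subset S$ and $E\subset[-\eta,\eta]$ it gives $\mu(EA)\le \mathrm{Leb}(E)\cdot\ocap_\#(A)$ for every flow-invariant $\mu$. Write $z=t_zx_z$ in the flow-box $L_\eta$.

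\emph{The atom $A_{0,1}$.} If $\Theta_\epsilon(z)=1$, then $g_\epsilon(t_z)=1$. By Lemma~\ref{lem:cantor-level} ($D(s)=0$ iff $s=0$) this forces $t_z=0$. Hence $A_{0,1}\subset \{0\}S$, and Lemma~\ref{lem:orbitcap-flowbox} with $E=\{0\}$ gives $\mu(A_{0,1})\le 0\cdot\ocap_\#(S)=0$.

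\emph{The atoms $A_{j,p}$, $j\ge1$.} If $\Theta_\epsilon(z)=\min\{f_\epsilon(x_z),g_\epsilon(t_z)\}=p/2^j$, then one of two cases holds.
\begin{enumerate}
\item[(i)] $g_\epsilon(t_z)=p/2^j$ and $f_\epsilon(x_z)\ge p/2^j>0$. Then $D(|t_z|/\epsilon)=(2^j-p)/2^j$, whose numerator is odd. By Lemma~\ref{lem:cantor-level}, $|t_z|/\epsilon$ lies in a single interval $I_{j,p'}$, so $t_z$ ranges over a set of Lebesgue measure $2\epsilon 3^{-j}$. Also $f_\epsilon(x_z)>0$ forces $f_n(x_z)>0$ for some $n$. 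By Lemma~\ref{lem:value of fn} this gives $x_z\in\bigcup_{n\ge0}U_n$. By $(D_n)$, the choice of $U_0$ and Lemma~\ref{lem:ocapsharp-subadd}, this union has $\ocap_\#\le a/2+\epsilon/4+\sum_{n\ge1}\epsilon 4^{-n-1}\le a/2+\epsilon/3$.
\item[(ii)] $f_\epsilon(x_z)=p/2^j$ and $g_\epsilon(t_z)\ge p/2^j>0$. Then $|t_z|<\epsilon$, a set of Lebesgue measure at most $2\epsilon$. By $(\mathrm{II}_n)$ of Lemma~\ref{lem:value of fn}, $x_z\in U_j\cup U_{j+1}$, which has $\ocap_\#\le \epsilon(4^{-j-1}+4^{-j-2})$.
\end{enumerate}
Adding the two contributions from Lemma~\ref{lem:orbitcap-flowbox} gives
\[
\mu(A_{j,p})\le \frac{a\epsilon}{3^{j}}+\frac{2\epsilon^2}{3\cdot 3^{j}}+\frac{5\epsilon^2}{8\cdot 4^{j}} .
\]
Using $4^{-j}\le 3^{-j}$ and $\epsilon<4/9$, the $\epsilon^2$ terms should fit inside $\epsilon/(2\cdot3^j)$, and $a\epsilon\le 2a\epsilon$. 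This would yield the stated bound $(4a+1)\epsilon/(2\cdot 3^j)$.

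\emph{Main obstacle.} I expect the numerical constants here to be the only delicate point. If the crude sum does not fit exactly, I would sharpen case (i). Points with $f_\epsilon(x_z)=1$ exactly, which come from $U_0$, could perhaps be split off, or the $\ocap_\#$ bound on $U_0\setminus\bigcup_{n\ge1}U_n$ could be handled more carefully. The entropy estimate below only needs some bound of the form $c\,\epsilon3^{-j}$ with $c$ independent of $\epsilon$ and $j$.

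\emph{Entropy.} Let $c:=(4a+1)/2$. At level $j$ there are $2^{j-1}$ atoms, each of mass at most $c\epsilon3^{-j}$. Hence the total mass outside $A_0$ is at most $s:=c\epsilon\sum_j(2/3)^j\le 2c\epsilon$. For $\epsilon$ small, $c\epsilon3^{-j}\le 1/e$, and $-x\log x$ is increasing on $[0,1/e]$. So the level-$j$ terms contribute at most
\[
2^{j-1}\, c\epsilon 3^{-j}\bigl(j\log 3+\log(1/(c\epsilon))\bigr).
\]
Summing over $j$ gives $O(\epsilon)+O(\epsilon\log(1/\epsilon))$, because $\sum_j j(2/3)^j<\infty$. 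The remaining term $-\mu(A_0)\log\mu(A_0)$ is bounded by $1-\mu(A_0)\le s=O(\epsilon)$. By Theorem~\ref{thm:dyn_Cantor}(2), $\widetilde A$ is null and can be ignored. Together these give $H_\mu(\alpha)\le C\epsilon(1+\log(1/\epsilon))$ with $C$ depending only on $a$. For the remaining range of $\epsilon<4/9$ with $c\epsilon 3^{-j}>1/e$ for small $j$, the bound follows by enlarging $C$.
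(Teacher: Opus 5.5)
Your proposal is essentially the paper's own proof. It uses the same split of $A_{j,p}$ according to whether $g_\epsilon(t_z)$ or $f_\epsilon(x_z)$ attains the value $p/2^j$, bounds both pieces by Lemma~\ref{lem:orbitcap-flowbox} via $\mathrm{Leb}(g_\epsilon^{-1}(p/2^j))=2\epsilon 3^{-j}$ and the containment of the relevant level sets of $f_\epsilon$ in the $U_n$, and finishes with the same $-x\log x$ summation over the $2^{j-1}$ atoms of each level. Your bound $\ocap_\#(U_0)<a/2+\epsilon/4$ is in fact more careful than the paper's recorded bound $\ocap_\#(U_0)<a$, which is impossible when $a=0$, i.e.\ for aperiodic $w$. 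As you suspected, with $a=0$ your displayed estimate reaches the exact constant $(4a+1)/2$ only for $\epsilon\le 48/109$ rather than for all $\epsilon<4/9$ (for $a>0$, the hypothesis $\epsilon<a$ absorbs the $\tfrac23\epsilon^2 3^{-j}$ term), but this is immaterial since the entropy bound only needs $\mu(A_{j,p})\le c\,\epsilon 3^{-j}$.
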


\begin{proof}
For $j\ge0$ and odd $p$ with $1\le p\le 2^{j}-1$, set
\[
E_{j,p}:=\{t\in[-\eta,\eta]: g_\epsilon(t)=p/2^j\},\quad
E_0:=\{t\in[-\eta,\eta]: g_\epsilon(t)=0\},
\]
and
\[
F_{j,p}:=\{x\in S: f_\epsilon(x)=p/2^j\},\quad
F_0:=\{x\in S: f_\epsilon(x)=0\}.
\]

By Lemma~\ref{lem:cantor-level} applied to the  Cantor staircase function $D$ and the definition
$g_\epsilon(t)=1-D(|t|/\epsilon)$ on $[-\epsilon,\epsilon]$ (and $g_\epsilon\equiv 0$ outside), we have
\[
\mathrm{Leb}(E_{0,1})=0,\qquad \mathrm{Leb}(E_0)=2(\eta-\epsilon),\qquad \mathrm{Leb}(E_{j,p})=\frac{2\epsilon}{3^j},\ j\ge1.
\]
By Lemma~\ref{lem:value of fn}, for $j\ge0$ and odd $p$ with $1\le p\le 2^{j}-1$, we have $F_{j,p}\subset U_j\cup U_{j+1}$, and hence by  the condition  $(D_1)$,
\[
\ocap_\#(F_{0,1})\le \ocap_\#(U_0)+\ocap_\#(U_1)\le a+\frac{\epsilon}{16}\]
and for $j\ge 1$, by   the condition  $(D_{j+1})$,
\[\ocap_\#(F_{j,p})\le \ocap_\#(U_j)+\ocap_\#(U_{j+1})\le \frac{\epsilon}{2^{2j+2}}+ \frac{\epsilon}{2^{2(j+1)+2}}<\frac{\epsilon}{2^{2j+1}}.
\]

For the case $j=0$,
since $A_{0,1}\subset \bigcup_{t\in E_{0,1}} tS$ and $\mathrm{Leb}(E_{0,1})=0$,
Lemma~\ref{lem:orbitcap-flowbox} (stated below) yields $\mu(A_{0,1})=0$.

For the case $j\ge1$,
if $\Theta_\epsilon(tx)=p/2^j$, then either $f_\epsilon(x)=p/2^j$ and $g_\epsilon(t)\ge p/2^j$, or $g_\epsilon(t)=p/2^j$ and $f_\epsilon(x)\ge p/2^j$.
Therefore,
\[
A_{j,p}\subset
\Big(\{t\in[-\epsilon,\epsilon]: g_\epsilon(t)\ge p/2^j\}\cdot F_{j,p}\Big)
\cup
\Big(E_{j,p}\cdot \{x\in S: f_\epsilon(x)\ge p/2^j\}\Big).
\]
Applying Lemma~\ref{lem:orbitcap-flowbox} (stated below) twice, we obtain
\begin{align*}
\mu(A_{j,p})
\le\ &
\mathrm{Leb}(\{t\in[-\epsilon,\epsilon]: g_\epsilon(t)\ge p/2^j\})\ \ocap_\#(F_{j,p}) \\
&\quad +\, \mathrm{Leb}(E_{j,p})\ \ocap_\#(\{x\in S: f_\epsilon(x)\ge p/2^j\}).
\end{align*}
Trivially $\mathrm{Leb}(\{t\in[-\epsilon,\epsilon]: g_\epsilon(t)\ge p/2^j\})\le\mathrm{Leb}([-\epsilon,\epsilon])= 2\epsilon$.
Meanwhile, as $\{f_\epsilon\ge p/2^j\}\subset \bigcup_{n=0}^{\infty} U_n\cup F$ (by Lemma \ref{lem:value of fn}), it follows from Lemma \ref{lem:ocapsharp-subadd} and \eqref{eq:oF=0} that
\[
\ocap_\#(\{f_\epsilon\ge p/2^j\})
\le a+\sum_{n=1}^\infty \frac{\epsilon}{2^{2n+2}}+\ocap_\#(F)
= a+\frac{\epsilon}{12}.
\]
Using $\mathrm{Leb}(E_{j,p})=2\epsilon/3^j$ and $\ocap_\#(F_{j,p})\le \epsilon/2^{2j+1}$, we get
\[
\mu(A_{j,p})
\le 2\epsilon\cdot \frac{\epsilon}{2^{2j+1}}
+\frac{2\epsilon}{3^j}\Big(a+\frac{\epsilon}{12}\Big)
=\frac{\epsilon^2}{2^{2j}}+\frac{2a\epsilon}{3^j}+\frac{\epsilon^2}{2\cdot 3^{j+1}}.
\]
Since $\epsilon\le 4/9$, one has $\epsilon^2/2^{2j+1}\le \epsilon/(2\cdot 3^{j+1})$ for all $j\ge1$, hence
\[
\mu(A_{j,p})\le \frac{2a\epsilon}{3^j}+\frac{\epsilon}{ 3^{j+1}}+\frac{\epsilon}{2\cdot 3^{j+1}}
=\frac{(4a+1)\epsilon}{2\cdot3^{j}}.
\]

Finally, we estimate the entropy $H_\mu(\alpha)$.
Let $\varphi(x)=-x\log x$ and for $j\ge1$ set
\[
K:=\frac{4a+1}{2},\quad b_j:=\frac{K\epsilon}{3^{j}},\quad
c_j:=\#\{p:\text{$p$ odd},\,1\le p\le 2^j-1\}=2^{j-1}.
\]
Then $\mu(A_{j,p})\le b_j$ for all such $(j,p)$, and hence
\[
H_\mu(\alpha)=\sum_{A\in\alpha}\varphi(\mu(A))
\le \varphi(\mu(A_0))+\sum_{j\ge1} c_j\,\varphi(b_j).
\]
Moreover,
\[
\sum_{j\ge1}\sum_{p}\mu(A_{j,p})
\le \sum_{j\ge1}c_j b_j
=K\epsilon\sum_{j\ge1}\frac{2^{j-1}}{3^{j}}
=K\epsilon,
\]
so $\mu(A_0)\ge 1-K\epsilon$ and thus, by \cite[(4.1.24), p.~68]{AS64},
\[
\varphi(\mu(A_0))\le K\epsilon+K^2\epsilon^2.
\]
On the other hand,
\begin{align*}
\sum_{j\ge1} c_j\,\varphi(b_j)
&= \sum_{j\ge1}c_j b_j\log\frac1{b_j} \\
&=\log\frac{1}{K\epsilon}\sum_{j\ge1}c_j b_j+\log3\sum_{j\ge1}jc_j b_j\\
&=K\epsilon\log\frac{1}{K\epsilon}+3K\epsilon\log3.
\end{align*}
Putting these estimates together yields
\[
H_\mu(\alpha)\le C\epsilon\bigl(1+\log(1/\epsilon)\bigr),
\]
where $C$ does not depend on $\epsilon$.
\end{proof}

\subsection{Inducing a flow} 
For each $x\in X$, define
\[
(\Theta_\epsilon)_x\in C(\R,[0,1]),\qquad
(\Theta_\epsilon)_x(s):=\Theta_\epsilon(s x),\quad s\in\R.
\]
Equip $C(\R,[0,1])$ with the compact-open topology (the topology induced by uniform convergence on
compact subsets of $\R$), and define the \textbf{hull} of $\Theta_\epsilon$ by 
\[
X_{\Theta_\epsilon}:=\overline{\{(\Theta_\epsilon)_x:x\in X\}}=\{(\Theta_\epsilon)_x:x\in X\}\subset C(\R,[0,1]).
\]

The translation flow on $C(\R,[0,1])$ is given by
\[
(t f)(s):=f(s+t)\qquad \text{ for }f\in C(\R,[0,1]),\ s,t\in\R.
\]
Clearly, $X_{\Theta_\epsilon}$ is $\R$-invariant, and so
 $(X_{\Theta_\epsilon},\R)$  is an $\R$-t.d.s.

Define
\[
\pi_{\Theta_\epsilon}:X\to X_{\Theta_\epsilon},\qquad \pi_{\Theta_\epsilon}(x):=(\Theta_\epsilon)_x.
\]
Then $\pi_{\Theta_\epsilon}$ is continuous and satisfies
\[
\pi_{\Theta_\epsilon}(t x)=t\pi_{\Theta_\epsilon}(x)
\qquad \forall\,x\in X,\ \forall\,t\in\R,
\]
so $\pi_{\Theta_\epsilon}$ is a factor map from $(X,\R)$ onto $(X_{\Theta_\epsilon},\R)$.

Fix $\tau>0$. Define the \textbf{sampling  map}
\[
r_\tau:C(\R,[0,1])\to [0,1]^\Z,\qquad (r_\tau h)(n):=h(n\tau),\quad n\in\Z.
\]
Set
\[
Y_\tau:=Y_\tau(\Theta_\epsilon):=\overline{r_\tau(X_{\Theta_\epsilon})}=r_\tau(X_{\Theta_\epsilon})\subset [0,1]^\Z,
\]
where the closure is taken in the product topology on $[0,1]^\Z$.
Let $T_\tau:Y_\tau\to Y_\tau$ be the shift
\[
(T_\tau y)(n):=y(n+1),\qquad y\in Y_\tau,\ n\in\Z.
\]
Then $(Y_\tau,T_\tau)$ is a t.d.s.
We call $(Y_\tau,T_\tau)$ the \textbf{$\tau$-sampling} of the flow $(X_{\Theta_\epsilon},\R)$. Notice the map $r_\tau:(X_{\Theta_\epsilon},\tau)\to (Y_\tau,T_\tau)$ is a factor map. 

Finally, define
\[
\pi_\tau:X\to Y_\tau,\qquad \pi_\tau:=r_\tau\circ \pi_{\Theta_\epsilon}.
\]
Equivalently, for $x\in X$ and $n\in\Z$,
\[
\pi_\tau(x)(n)=\Theta_\epsilon(n\tau x).
\]
As $r_\tau$ and $\pi_{\Theta_\epsilon}$ are $\tau$-factor maps, it follows that $\pi_\tau$ is a factor map from the $\tau$-discretization $(X,\tau)$ 
onto $(Y_\tau,T_\tau)$.

\subsection{Proof of Theorem \ref{thm:SFBP_small_entropy_factors}}

\begin{proof}
Without loss of generality, we assume that $\tau>0.$
Let $\pi_\tau:(X,\tau)\to (Y_\tau,T_\tau)$ be the factor map defined as above. 
By Theorem~\ref{thm:dyn_Cantor}(1), one has $\Theta_\epsilon(w)\neq \Theta_\epsilon(w').$ Thus,
\begin{equation}\label{eq:sep-F}
\pi_\tau(w)\neq \pi_\tau(w'),
\end{equation}
 because $\pi_\tau(x)(0)=\Theta_\epsilon(x)$ for all $x\in X$.

Let $\mu\in\mc \PP_\R(X)$  and set $\nu:=(\pi_\tau)_*\mu\in\PP_{T_\tau}(Y_\tau)$.
Consider the following countable partition of $Y_\tau$ (with respect to $\nu$):
\[
\beta:=\Big\{\widetilde A_0\Big\}\ \cup\
\Big\{\widetilde A_{j,p}:\ j\ge1,\ p\ \text{odd},\ 1\le p\le 2^j-1\Big\},
\]
where
\[
\widetilde A_0:=\{y\in Y_\tau:\ y(0)=0\},
\]
and for $j\ge1$ and odd $p$,
\[
\widetilde A_{j,p}:=\{y\in Y_\tau:\ y(0)=p/2^j\}.
\]
Note that $\{\widetilde{A}_{j,p}\}$ are pairwise disjoint as 
$\frac{p}{2^j}=\frac{p'}{2^{j'}}$ implies that $j=j'$ and $p=p'$. Moreover,
\[
\nu\Big(\widetilde A_0\cup \bigcup_{j\ge0}\bigcup_{p}\widetilde A_{j,p}\Big)
=\mu\Big(A_0\cup \bigcup_{j\ge0}\bigcup_{p}A_{j,p}\Big)
=1.
\]

\begin{claim}\label{cl:1}
    The partition $\beta$ is a generator of $(Y_\tau,\nu,T_\tau)$.
\end{claim}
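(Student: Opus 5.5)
The plan is to show that, up to $\nu$-null sets, the $\sigma$-algebra $\bigvee_{n\in\Z}T_\tau^{-n}\beta$ coincides with the full Borel $\sigma$-algebra of $Y_\tau$. Since $Y_\tau\subset[0,1]^\Z$ carries the product topology, its Borel $\sigma$-algebra is generated by the coordinate functions $y\mapsto y(n)$, $n\in\Z$. It therefore suffices to prove that each coordinate $y\mapsto y(n)$ is measurable, modulo $\nu$, with respect to $\bigvee_{n}T_\tau^{-n}\beta$.

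The first step is the coordinate $0$. On the full-measure set
\[
Y':=\widetilde A_0\cup\bigcup_{j\ge0}\bigcup_p\widetilde A_{j,p},
\]
the value $y(0)$ is determined by which atom of $\beta$ contains $y$. Indeed, on $\widetilde A_0$ it equals $0$, and on $\widetilde A_{j,p}$ it equals $p/2^j$. Note that $\widetilde A_{0,1}$ is the set where $y(0)=1$; since $\mu(A_{0,1})=0$ by Proposition \ref{prop:small enough}, we may list it in $\beta$ or discard it as null. Thus $y(0)=\sum_{B\in\beta} c_B 1_B(y)$ $\nu$-a.e., with constants $c_B$, and so $y(0)$ is $\beta$-measurable mod $\nu$.

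The second step is the coordinate $n$. We have $y(n)=(T_\tau^n y)(0)$, which is $T_\tau^{-n}\beta$-measurable on $T_\tau^{-n}Y'$. Since $\nu$ is $T_\tau$-invariant, $T_\tau^{-n}Y'$ also has full measure. We intersect over all $n\in\Z$ to get a full-measure set $Y''$. On $Y''$ the itinerary of $y$ through $\beta$ determines $y$ completely. Hence the partition $\bigvee_n T_\tau^{-n}\beta$ separates the points of $Y''$, and each coordinate map is measurable with respect to it. Consequently $\bigvee_n T_\tau^{-n}\beta$ contains all cylinder sets mod $\nu$, and therefore the whole Borel $\sigma$-algebra mod $\nu$. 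This is exactly the generator property.

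We must also verify that $\nu(Y')=1$, i.e.\ that $\nu(Y')=\mu\bigl(\pi_\tau^{-1}(Y')\bigr)$. The key identity is
\[
\pi_\tau^{-1}(\widetilde A_{j,p})=A_{j,p}.
\]
Combined with Theorem \ref{thm:dyn_Cantor}(2) ($\mu(\widetilde A)=0$), this gives $\nu(Y')=1$, which is the displayed equality preceding the claim. This identity also confirms that $\beta$ is a genuine countable measurable partition, since the sets $\{y(0)=c\}$ are closed.

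The only mild subtlety is that $\beta$ is countable rather than finite, and that the generator must be understood mod $\nu$. I expect no real obstacle, because the generator notion is purely measure-theoretic, and the finiteness of $H_\nu(\beta)=H_\mu(\alpha)$ (Proposition \ref{prop:small enough}) is what will subsequently make the Kolmogorov--Sinai bound $h_\nu(T_\tau)\le H_\nu(\beta)$ usable.
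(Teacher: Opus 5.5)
Your proposal is correct and follows essentially the same route as the paper. The paper only observes that $\nu(Y_\tau\cap A^{\Z})=1$ for the countable value set $A$ and calls the generator property trivial; your argument spells this out: on the full-measure set $\bigcap_{n\in\Z}T_\tau^{-n}Y'$ every coordinate $y(n)$ is a function of the $\beta$-itinerary, so the Borel $\sigma$-algebra lies in $\bigvee_{n}T_\tau^{-n}\beta$ mod $\nu$.
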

\begin{proof}[Proof of Claim \ref{cl:1}]Let
$
A:=\{0\} \cup \{{p}/{2^j}: j\ge 1,\ p\ \text{odd},\ 1\le p\le 2^j-1\}.
$
By the discussion above,
$
\nu\bigl(Y_\tau\cap A^{\Z}\bigr)=1.
$
Now the claim follows trivially.
\end{proof}

Since $\beta$ is a generator, Kolmogorov--Sinai's theorem \cite{sinai2007metric} (see also \cite[Theorem 4.2.2]{D11}) yields
\[
\mathrm h_\nu(T_\tau)= \mathrm h_\nu(T_\tau,\beta).
\]
Moreover, $\pi_\tau^{-1}\beta=\alpha\pmod\mu$, and 
hence $\mathrm h_\nu(T_\tau,\beta)=\mathrm{h_{\mu}}(\tau,\alpha)$.
Therefore, by Proposition~\ref{prop:small enough}, 
\[
\mathrm h_\nu(T_\tau)=\mathrm{h_{\mu}}(\tau,\alpha)\le H_\mu(\alpha)
\le  C\epsilon\bigl(1+\log(1/\epsilon)\bigr).
\]
Since $\mu\in\mc \PP_\R(X)$ is arbitrary, it follows that 
\[h_{\R}(\pi_\tau)\le C\epsilon\bigl(1+\log(1/\epsilon)\bigr). \]
Now choose $\epsilon>0$ so small that
$C\epsilon\bigl(1+\log(1/\epsilon)\bigr)<\delta$, which
together with \eqref{eq:sep-F} completes the proof of Part (1) of the theorem. To prove Part (2), fix a compatible metric $d$ on $X$.
For each $n\in\N$, let
\[
K_n:=\{(x,x')\in X\times X:\ d(x,x')\ge 2^{-n}\}.
\]
Then $K_n$ is a compact subset of $X\times X\setminus \Delta_X$.
We first construct, for each $n\in\N$, a factor map
\[
\Pi_n:(X,\tau)\to (Z_n,S_n)
\]
such that
$h_{\R}(\Pi_n)<\infty$
and every fibre of $\Pi_n$ has diameter $<2^{-n}$.
To this end, fix $n\in\N$. For each $(x,x')\in K_n$, by Theorem~\ref{thm:SFBP_small_entropy_factors} (1),
there exist a $\Z$-t.d.s.\ $(Y_{x,x'},S_{x,x'})$ together with a factor map
\[
\pi_{x,x'}:(X,\tau)\to (Y_{x,x'},S_{x,x'})
\]
such that
$h_{\R}(\pi_{x,x'})<1 \text{ and }
\pi_{x,x'}(x)\neq \pi_{x,x'}(x')$.
Since $\pi_{x,x'}(x)\neq \pi_{x,x'}(x')$, there exist disjoint open sets
$U_{x,x'},V_{x,x'}\subset Y_{x,x'}$ such that
\[
\pi_{x,x'}^{-1}(U_{x,x'})\times \pi_{x,x'}^{-1}(V_{x,x'})
\]
is an open neighborhood of $(x,x')$ in $X\times X$.
These neighborhoods cover $K_n$, and $K_n$ is compact. Thus there exist finitely many pairs
$
(x_{n,1},x'_{n,1}),\dots,(x_{n,m_n},x'_{n,m_n})$,
such that
\[
K_n\subset \bigcup_{i=1}^{m_n}
\Bigl(\pi_{n,i}^{-1}(U_{n,i})\times \pi_{n,i}^{-1}(V_{n,i})\Bigr),
\]
where
\[
\pi_{n,i}:=\pi_{x_{n,i},x'_{n,i}}:(X,\tau)\to (Y_{n,i},S_{n,i}).
\]

Now define the product map
\[
\widetilde\Pi_n:=(\pi_{n,1},\dots,\pi_{n,m_n}):X\to Y_{n,1}\times\cdots\times Y_{n,m_n}.
\]
Let
$Z_n:=\widetilde\Pi_n(X)$,
and let $S_n$ be the restriction of the product map to $Z_n$.
Then we have a (dynamical) factor map 
\[
\Pi_n:(X,\tau)\to (Z_n,S_n).
\]
 By Lemma~\ref{lem:fg-subadditive} proven below,
\[
h_{\R}(\Pi_n)\le \sum_{i=1}^{m_n} h_{\R}(\pi_{n,i})<m_n<\infty.
\]

We claim that every fibre of $\Pi_n$ has diameter $<2^{-n}$.
Indeed, suppose $\Pi_n(x)=\Pi_n(x')$. Then $\pi_{n,i}(x)=\pi_{n,i}(x')$ for all $i$.
If $d(x,x')\ge 2^{-n}$, then $(x,x')\in K_n$, so for some $i$,
\[
(x,x')\in \pi_{n,i}^{-1}(U_{n,i})\times \pi_{n,i}^{-1}(V_{n,i}),
\]
hence
\[
\pi_{n,i}(x)\in U_{n,i},
\qquad
\pi_{n,i}(x')\in V_{n,i},
\]
contradicting $\pi_{n,i}(x)=\pi_{n,i}(x')$.
Thus, each fibre of $\Pi_n$ has diameter $<2^{-n}$.
Next, define
\[
\rho_n:=(\Pi_1,\dots,\Pi_n):X\to W_n:=\rho_n(X)\subset Z_1\times\cdots\times Z_n,
\]
and let $T_n$ be the restriction of $S_1\times\cdots\times S_n$ to $W_n$.
Then $\rho_n:(X,\tau)\to (W_n,T_n)$ is a factor map. Again by Lemma~\ref{lem:fg-subadditive},  we have
\[
h_{\R}(\rho_n)\le \sum_{j=1}^n h_{\R}(\Pi_j)
<
\sum_{j=1}^n m_j<\infty.
\]
Since $\rho_n$ refines $\Pi_n$, every fibre of $\rho_n$ has diameter $<2^{-n}$.
For each $n$, let
$q_n:W_{n+1}\to W_n$ be the natural factor map obtained by forgetting the last coordinate.
Then
$\rho_n=q_n\circ \rho_{n+1}$,
so we obtain an inverse system
\[
(W_1,T_1)\xleftarrow{q_1}(W_2,T_2)\xleftarrow{q_2}(W_3,T_3)\xleftarrow{}\cdots
\]

Define
\[
\rho:X\to \varprojlim (W_n,T_n),
\qquad
\rho(x):=(\rho_n(x))_{n\in\N}.
\]
Because the fibres of $\rho_n$ have diameters tending to $0$, the map $\rho$ is injective.
Since $X$ is compact and the inverse limit is Hausdorff, $\rho$ is a homeomorphism onto its image.
It remains to prove surjectivity. Let $(w_n)_{n\in\N}\in \varprojlim (W_n,T_n)$.
For each $n$, set
$F_n:=\rho_n^{-1}(w_n)$.
Then each $F_n$ is a nonempty compact subset of $X$ with diameter less than $2^{-n}$, and
$F_{n+1}\subseteq F_n$ for all $n\in\N$, because $\rho_n=q_n\circ \rho_{n+1}$.
Thus,
$\bigcap_{n=1}^\infty F_n$
consists of exactly one point, say $x\in X$.
Then $\rho_n(x)=w_n$ for all $n$, so $\rho(x)=(w_n)_{n\in\N}$.
Therefore $\rho$ is surjective.

\end{proof}

\begin{lem}[product factor map inequality]\label{lem:fg-subadditive}
Let $(X,\R)$ be a topological flow, fix $\tau\in\R$, and let
\[
\pi_i:(X,\tau)\to (Y_i,S_i),\qquad i=1,\dots,m,
\]
be factor maps.
Define the product  map
\[
\pi:=(\pi_1,\dots,\pi_m):X\to Y_1\times\cdots\times Y_m,
\]
and let
\[
W:=\pi(X)\subset Y_1\times\cdots\times Y_m.
\]
Let $T$ be the restriction of $S_1\times\cdots\times S_m$ to $W$.
Then
\[
h_{\R}(\pi)\le \sum_{i=1}^m h_{\R}(\pi_i).
\]
\end{lem}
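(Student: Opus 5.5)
The plan is to reduce the inequality to a measure-by-measure statement and then apply the standard subadditivity of Kolmogorov--Sinai entropy for joinings. Fix $\mu\in\PP_\R(X)$ and write $\nu:=\pi_*\mu\in\PP_T(W)$ and $\nu_i:=(\pi_i)_*\mu\in\PP_{S_i}(Y_i)$. Since each $\pi_i$ is a factor map for the $\tau$-discretization, these push-forwards are invariant. Let $p_i:W\to Y_i$ be the restriction to $W$ of the $i$-th coordinate projection. Then $p_i$ is continuous and surjective, because $p_i\circ\pi=\pi_i$ and $\pi_i$ is onto. It also intertwines $T$ with $S_i$. Moreover $(p_i)_*\nu=\nu_i$, so $(W,\nu,T)$ is a joining of the systems $(Y_i,\nu_i,S_i)$. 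It therefore suffices to prove
\[
\mathrm h_\nu(T)\le \sum_{i=1}^m \mathrm h_{\nu_i}(S_i),
\]
because the right-hand side is at most $\sum_i h_\R(\pi_i)$ by the definition of flow-generated entropy, and taking the supremum over $\mu$ then gives the claim.

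To prove the displayed inequality, I will use finite partitions. First I reduce to partitions of product form. Products of finite Borel partitions $\gamma_1\times\cdots\times\gamma_m$, with each $\gamma_i$ a partition of $Y_i$, restricted to $W$ and pulled back as $\bigvee_i p_i^{-1}\gamma_i$, generate the Borel $\sigma$-algebra of $W$. This holds because $W\subset\prod Y_i$ carries the subspace topology and Borel sets of a finite product of compact metric spaces are generated by rectangles. Given any finite partition $\xi$ of $W$ and $\delta>0$, I choose increasing product partitions $\zeta_k$ whose join generates. The standard estimate $\mathrm h_\nu(T,\xi)\le \mathrm h_\nu(T,\zeta_k)+H_\nu(\xi\mid\zeta_k)$, together with $H_\nu(\xi\mid\zeta_k)\to0$ by the martingale/increasing-$\sigma$-algebra argument, shows
\[
\mathrm h_\nu(T)=\sup\Big\{\mathrm h_\nu\Big(T,\bigvee_i p_i^{-1}\gamma_i\Big)\Big\}.
\]

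For such a partition, subadditivity of dynamical entropy under joins gives
\[
\mathrm h_\nu\Big(T,\bigvee_i p_i^{-1}\gamma_i\Big)\le \sum_i \mathrm h_\nu(T,p_i^{-1}\gamma_i).
\]
This follows from $H(\alpha\vee\beta)\le H(\alpha)+H(\beta)$ applied to the refinements $\bigvee_{k<n}T^{-k}(\cdot)$, then dividing by $n$ and letting $n\to\infty$. Finally, $\mathrm h_\nu(T,p_i^{-1}\gamma_i)=\mathrm h_{\nu_i}(S_i,\gamma_i)\le \mathrm h_{\nu_i}(S_i)$, since $p_i$ is a measure-preserving factor map. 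This also covers the case where some $h_\R(\pi_i)=\infty$, where there is nothing to prove.

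The main, though mild, obstacle is the reduction to product partitions, that is, the generation and approximation step. If I want to avoid it, I would instead invoke the known fact that $\mathrm h_\nu(T)\le \sum_i \mathrm h_{\nu_i}(S_i)$ for any joining. This follows equally from Abramov--Rokhlin-type subadditivity: $\mathrm h_\nu(T)=\mathrm h_\nu(T,\mathcal B)$, where $\mathcal B=\bigvee_i p_i^{-1}\mathcal B_{Y_i}$ is the full Borel $\sigma$-algebra of $W$, and entropy relative to a join of invariant sub-$\sigma$-algebras is at most the sum of the entropies. The case $\tau\le 0$ needs no separate treatment, since all the statements above concern only $\Z$-actions.
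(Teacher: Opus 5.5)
Your proposal is correct and follows the same route as the paper: fix $\mu\in\PP_\R(X)$, use the coordinate projections to view $(W,\pi_*\mu,T)$ as a joining of the systems $(Y_i,(\pi_i)_*\mu,S_i)$, apply subadditivity of Kolmogorov--Sinai entropy over such joinings, and take the supremum over $\mu$. The only difference is that the paper simply cites this subadditivity (\cite[Fact 4.4.3]{D11}), whereas you also sketch a valid proof of it via product partitions.
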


\begin{proof}
Fix $\mu\in \PP_\R(X)$ and set
\[
\nu:=\pi_*\mu\in \PP_T(W),
\text{ and }
\nu_i:=(\pi_i)_*\mu\in \PP_{S_i}(Y_i),~i=1,2,\ldots,m.
\]
For each $i\in\{1,2,\ldots,m\}$, let $\rho_i:W\to Y_i$ be the coordinate projection. Then
\[
\rho_i\circ \pi=\pi_i
\quad\text{and}\quad
(\rho_i)_*\nu=\nu_i,\qquad i=1,2,\ldots,m.
\]

Since each $\rho_i:(W,T)\to (Y_i,S_i)$ is a factor map, it follows from \cite[Fact 4.4.3]{D11} that
\[
h_\nu(T)\le \sum_{i=1}^m h_{(\rho_i)_*\nu}(S_i)
=\sum_{i=1}^m h_{\nu_i}(S_i)
=\sum_{i=1}^m h_{(\pi_i)_*\mu}(S_i).
\]
Taking the supremum over all $\mu\in \PP_\R(X)$ gives
\[
h_{\R}(\pi)\le \sum_{i=1}^m h_{\R}(\pi_i).\qedhere
\]
\end{proof}

Theorem~\ref{thm:SFBP_small_entropy_factors} (1) provides, for each fixed $\tau>0$, a factor
$\pi_\tau:(X,\tau)\to(Y_\tau,T_\tau)$ that separates any prescribed pair of points and has arbitrarily small flow-generated entropy.
It is natural to ask whether the following strengthening holds:
\begin{ques}
Given $\delta>0$ and distinct points $w\neq w'$ in a fixed-point free flow $(X,\R)$,
does there exist a flow factor map $\pi:(X,\R)\to (Y,\R)$ such that $\pi(w)\neq \pi(w')$ and
\[
\mathrm{h_{{top}}}(Y,\R)<\delta?
\]
\end{ques}

We remark that we are able to construct an example which shows that our method does not prove the existence of such flow factors.

\subsection{Estimating measures through $\ocap_\#$}
\begin{lem}\label{lem:orbitcap-flowbox}
Let $(X,\R)$ be a topological flow, let $S$ be a closed cross-section with injectivity time $\eta>0$, and let $\mu$ be a flow-invariant probability measure.
For any Borel set $A\subset S$ and any Borel set $E\subset[-\eta,\eta]$, one has
\[
\mu\left(\cup_{t\in E} tA\right)\ \le\ \mathrm{Leb}(E)\cdot \ocap_\#(A).
\]
\end{lem}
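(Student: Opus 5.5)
We aim to prove the bound by an averaging (Fubini) argument along orbit segments, using the flow-invariance of $\mu$ and the definition \eqref{eq:ocapf} of $\ocap_\#$. Set $B:=\bigcup_{t\in E}tA$. This set is the image of the Borel set $E\times A$ under the injective continuous map $(t,x)\mapsto tx$ on $[-\eta,\eta]\times S$. Hence $B$ is Borel by Suslin's theorem, or at least $\mu$-measurable by \cite[Theorem 2.9]{G03}. The key pointwise identity concerns any $x\in X$ and the Lebesgue measure of its visiting times to $B$:
\[
\int_0^T 1_B(sx)\,ds=\mathrm{Leb}\{s\in[0,T]: sx\in B\}.
\]
Each time $s$ with $sx\in B$ can be written as $s=r+t$ with $rx\in A$ and $t\in E$. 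By injectivity of the flow box, the pair $(r,t)$ is unique. Thus
\[
\{s\in[0,T]:sx\in B\}\subset\bigcup_{r\in[-\eta,T+\eta],\ rx\in A}(r+E),
\]
and therefore the integral is at most $\mathrm{Leb}(E)\cdot|\{r\in[-\eta,T+\eta]: rx\in A\}|$. We then shift the window by $\eta$ to compare with the count over $[0,T+2\eta]$ for the point $(-\eta)x$. This yields, uniformly in $x$,
\[
\frac1T\int_0^T 1_B(sx)\,ds\le \mathrm{Leb}(E)\cdot\frac1T\max_{y\in X}\bigl|\{0\le r\le T+2\eta: ry\in A\}\bigr|.
\]

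Next we integrate in $\mu$. By Fubini and the $\R$-invariance of $\mu$, $\mu(B)=\int_X\frac1T\int_0^T1_B(sx)\,ds\,d\mu(x)$. This is bounded by the supremum over $x$ of the inner average, hence by the right-hand side above. Letting $T\to\infty$, the factor $(T+2\eta)/T\to1$, and the limit defining $\ocap_\#(A)$ exists by subadditivity. We conclude $\mu(B)\le \mathrm{Leb}(E)\ocap_\#(A)$.

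The main obstacle is measurability. The map $(s,x)\mapsto 1_B(sx)$ must be jointly measurable for Fubini to apply, and $B$ must be measurable in the first place. This follows once $B$ is known to be Borel, via injectivity plus Lusin–Suslin. Failing that, we pass to the $\mu\otimes\mathrm{Leb}$-completion, where $B$ is universally measurable as an analytic set. A secondary point is that the decomposition $s=r+t$ relies on $E\subset[-\eta,\eta]$. Distinct $r$'s with $rx\in A$ are at least $2\eta$ apart, but that separation is not actually needed, since we only use a union bound on the translates $r+E$.
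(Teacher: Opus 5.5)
Your proof is correct. It shares the paper's key orbitwise estimate: for every $x$, the times $s\in[0,T]$ with $sx\in\bigcup_{t\in E}tA$ are covered by the translates $r+E$ with $r\in[-\eta,T+\eta]$ and $rx\in A$. Hence their Lebesgue measure is at most $\mathrm{Leb}(E)$ times the number of such $r$.

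Where you differ is in how you pass from this bound to $\mu$. The paper first reduces to ergodic $\mu$ via the ergodic decomposition. It then invokes the pointwise ergodic theorem for flows to identify $\lim_{L\to\infty}\frac1L\int_0^L\mathbf 1_B(sx)\,ds$ with $\mu(B)$ for $\mu$-a.e.\ $x$. You instead use Tonelli and $\R$-invariance to write $\mu(B)$ as the $\mu$-average of the finite-time orbit averages. You then bound the integrand uniformly in $x$ and only afterwards let $T\to\infty$.

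Your route is more elementary and slightly stronger. It needs neither ergodicity nor any ergodic theorem, and it handles all invariant measures at once. It also yields the quantitative inequality $\mu(B)\le \mathrm{Leb}(E)\,\frac1T\sup_{y\in X}|\{0\le r\le T+2\eta:\ ry\in A\}|$ for every $T>0$.

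Its only measure-theoretic input is the joint measurability of $(s,x)\mapsto\mathbf 1_B(sx)$. You correctly get this from the Lusin--Souslin theorem, since the chart $(t,x)\mapsto tx$ is continuous and injective on the compact metrizable space $[-\eta,\eta]\times S$. This is the same fact the paper uses in Lemma~\ref{lem:strong_composition}, and the paper's ergodic-theorem argument implicitly needs the measurability of $B$ as well.

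You also make explicit the shift by $\eta$, comparing counts on $[-\eta,T+\eta]$ for $x$ with counts on $[0,T+2\eta]$ for $(-\eta)x$. The paper uses this step silently when it bounds $\limsup_L N_A(x;L)/L$ by $\ocap_\#(A)$. The paper's approach is the more standard template and gives a.e.\ convergence of orbit averages, but that extra information is not needed for this lemma.
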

\begin{proof}
By  a standard argument involving the ergodic decomposition, we may assume, w.l.o.g., that $\mu$ is ergodic.

Fix $A\subset S$ and $E\subset[-\eta,\eta]$.
For $x\in X$ set $$R_A(x):=\{s\in\R: sx\in A\}$$ and for $L>0$ set
\[
N_A(x;L):=\left|R_A(x)\cap[-\eta,L+\eta]\right|.
\]
Note that
\[
sx\in \bigcup_{t\in E} tA
~\Longleftrightarrow~
\exists\,t\in E\ \text{ with }\ (s-t)x\in A
~\Longleftrightarrow~
s\in R_A(x)+E.
\]
Thus,
\begin{align*}
    \int_0^L \mathbf 1_{\cup_{t\in E}tA}(sx)ds&\le \mathrm{Leb}\left((R_A(x)+E)\cap[0,L]\right)\\
    &\le\sum_{r\in R_A(x)\cap[-\eta,L+\eta]}\mathrm{Leb}(r+E)\\
&=
\mathrm{Leb}(E)\cdot N_A(x;L).
\end{align*}

Dividing by $L$ and taking $\limsup_{L\to\infty}$ gives,  for every $x\in X$,
\[
\limsup_{L\to\infty}\frac1L\int_0^L \mathbf 1_{\bigcup_{t\in E}tA}(sx)\,ds
\ \le\ \mathrm{Leb}(E)\cdot \ocap_\#(A),
\]
where $\ocap_\#(A)$ is defined in~\eqref{eq:ocapf}.
 
By the pointwise ergodic theorem for flows, the limit
$$\lim_{L\to\infty}\frac1L\int_0^L \mathbf 1_{\cup_{t\in E}tA}(sx)\,ds$$
is equal to $\mu(\cup_{t\in E}tA)$ for $\mu$-a.e. $x\in X$.
Therefore \begin{equation*}
    \mu(\cup_{t\in E}tA)\le \mathrm{Leb}(E)\cdot\ocap_\#(A).\qedhere
\end{equation*}
\end{proof}
As a corollary, we have the following:
\begin{cor}\label{cor:ocapsharp-null}
Let $(X,\R)$ be a topological flow, let $S$ be a closed cross-section with injectivity
time $\eta>0$, and let $A\subset S$ be closed. Then $\ocap_\#(A)=0$ if and only if  $[-\eta,\eta]A$  is a null set. 
\end{cor}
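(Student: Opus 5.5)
The statement to prove is Corollary~\ref{cor:ocapsharp-null}, and the plan is to get both directions from Lemma~\ref{lem:orbitcap-flowbox} together with a converse estimate obtained by counting visits.

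For the direction ``$\ocap_\#(A)=0 \Rightarrow [-\eta,\eta]A$ null'', apply Lemma~\ref{lem:orbitcap-flowbox} with $E=[-\eta,\eta]$. For every flow-invariant $\mu$ this gives
\[
\mu([-\eta,\eta]A)\le 2\eta\cdot\ocap_\#(A)=0.
\]
Since $A$ is closed, $[-\eta,\eta]A$ is compact and hence Borel, so nullness makes sense.

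For the converse, assume $\ocap_\#(A)=c>0$; we will produce a flow-invariant $\mu$ with $\mu([-\eta/2,\eta/2]A)>0$.
\begin{enumerate}
\item Choose $T_n\to\infty$ and points $x_n$ with $N_n:=|\{0\le t\le T_n: tx_n\in A\}|\ge cT_n/2$ (possible by the definition \eqref{eq:ocapf}).
\item Let $\mu_n:=\frac1{T_n}\int_0^{T_n}\delta_{sx_n}\,ds$ and take a weak* limit point $\mu$. By the standard Krylov--Bogolyubov argument, $\mu$ is flow-invariant.
\item Each visit time $t\in[0,T_n]$ with $tx_n\in A$ contributes a segment $[t-\eta/2,t+\eta/2]x_n\subset[-\eta/2,\eta/2]A$. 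By injectivity of the flow-box these segments are disjoint in time.
\item Hence
\[
\mu_n([-\eta/2,\eta/2]A)\ \ge\ \frac{(N_n-2)\,\eta}{T_n}\ \ge\ \frac{c\eta}{2}-o(1),
\]
where discarding at most two visit times handles the edge effects at $0$ and $T_n$.
\item The set $[-\eta/2,\eta/2]A$ is closed, being the image of the compact set $[-\eta/2,\eta/2]\times A$. The Portmanteau theorem (Lemma~\ref{lem:portmanteau}(1)) then gives $\mu([-\eta/2,\eta/2]A)\ge c\eta/2>0$.
\end{enumerate}
Since $[-\eta/2,\eta/2]A\subset[-\eta,\eta]A$, the set $[-\eta,\eta]A$ is not null, which proves the contrapositive.

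The main point needing care is the disjointness used in step~3. If $t<t'$ are visit times with $tx_n, t'x_n\in A\subset S$, then $t'-t>2\eta$, since otherwise injectivity of $[-\eta,\eta]\times S\to X$ fails. Concretely, writing $z=tx_n$, the points $(0,z)$ and $(t'-t,z)$ would be distinct points of $[-\eta,\eta]\times S$ with the same image $z'=t'x_n$, and if $t'-t=2\eta$ one compares $(\eta,z)$ with $(-\eta,z')$ instead. Therefore intervals of half-length $\eta/2$ around distinct visit times are disjoint. This also means $N_n\le T_n/(2\eta)+1$, so the estimate is consistent. Closedness of $A$ is used only to make the set closed for the Portmanteau step.
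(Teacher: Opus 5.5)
Your proof is correct and follows essentially the same route as the paper. One direction comes from Lemma \ref{lem:orbitcap-flowbox}. For the other, the paper likewise uses empirical measures along orbit segments with many hitting times, disjoint time-windows of half-width $\rho$ around the hitting times, and Portmanteau on the compact set $[-\rho,\rho]A$. The paper keeps $\rho\in(0,\eta)$ arbitrary and obtains $\mu([-\rho,\rho]A)\ge 2\rho c$, whereas you take $\rho=\eta/2$.

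One small slip is in your injectivity justification: $(0,z)$ and $(t'-t,z)$ have images $z$ and $z'$, so they do not have a common image. For $0<t'-t\le 2\eta$ the right pair is $(\tfrac{t'-t}{2},z)$ and $(-\tfrac{t'-t}{2},z')$. In any case, comparing $(t'-t,z)$ with $(0,z')$ gives separation $>\eta$, and that already suffices for your windows.
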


\begin{proof}
One direction follows directly from Lemma \ref{lem:orbitcap-flowbox}. For the reverse implication, 
suppose for contradiction that
$[-\eta,\eta]A$  is null but $\ocap_\#(A)>0$.
Set
\[
c:=\ocap_\#(A)>0,
\]
and fix any $\rho\in(0,\eta)$.

By the definition of $\ocap_\#(A)$, there exist $T_n\to\infty$, as $n\to\infty$, and $\{x_n\}_{n\in\N}\subset X$
such that
\begin{equation}\label{eq:2026319}
\lim_{n\to\infty}\frac{N_n}{T_n}= c, \text{ where }N_n:=\bigl|\{0\le t\le T_n: tx_n\in A\}\bigr|.
\end{equation}
Fix $n\in\N$ and write these hitting times (i.e., the elements of $N_n$) in increasing order as
\[
0\le t^{(n)}_1<\cdots<t^{(n)}_{N_n}\le T_n .
\]

Since $A\subset S$ and $S$ has injectivity time $\eta$, any two distinct hitting
times are separated by more than $2\eta$.
Therefore the intervals
\[
[t^{(n)}_i-\rho,\ t^{(n)}_i+\rho],\qquad i=1,\dots,N_n,
\]
are pairwise disjoint.

Notice that, whenever $\rho\le t^{(n)}_i\le T_n-\rho$, for every
$s\in[-\rho,\rho]$, we have
\[
(t^{(n)}_i+s)x_n\in[-\rho,\rho]A.
\]
Thus,
\[
\mathrm{Leb}\bigl(\{0\le t\le T_n: tx_n\in[-\rho,\rho]A\}\bigr)
\ge 2\rho\,(N_n-2).
\]

Define the following probability measures
\[
\mu_n:=\frac1{T_n}\int_0^{T_n}\delta_{tx_n}\,dt.
\]
Then,
\[
\mu_n([-\rho,\rho]A)
=
\frac1{T_n}\mathrm{Leb}\bigl(\{0\le t\le T_n: tx_n\in[-\rho,\rho]A\}\bigr)
\ge
\frac{2\rho\,(N_n-2)}{T_n}.
\]

Passing to a subsequence if necessary, we may assume that $\mu_n\to\mu$ in the
weak$^\ast$ topology. Clearly, $\mu$ is an $\R$-invariant probability
measure.  Since $A$ is closed and $[-\rho,\rho]$ is compact, the set $[-\rho,\rho]A$ is
compact, hence closed. Therefore, by Portmanteau theorem (Lemma \ref{lem:portmanteau} (1)), the following holds:
\begin{align*}
    \mu([-\rho,\rho]A)\ge \limsup_{n\to\infty}\mu_n([-\rho,\rho]A)
\ge
2\rho \limsup_{n\to\infty}\frac{N_n-2}{T_n}
=
2\rho c
>0.
\end{align*}
This contradicts the assumption that $[-\eta,\eta]A$ is null, as $[-\rho,\rho]A\subset[-\eta,\eta]A$.
\end{proof}
\appendix
\section{SBP implies  the marker property}
\begin{prop}\label{prop:appendix}
    An aperiodic system $(X,T)$ with SBP has the marker property.
\end{prop}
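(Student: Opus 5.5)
Given a finite set $F\subset\Z\setminus\{0\}$, we must produce an open $U$ with $U\cap T^nU=\emptyset$ for $n\in F$ and $X=\bigcup_{n\in\Z}T^nU$. Set $N:=\max_{n\in F}|n|$. Since $(X,T)$ is aperiodic, every $x$ satisfies $T^nx\neq x$ for $0<|n|\le N$. By continuity, $x$ then has an open neighborhood $W_x$ with $W_x\cap T^nW_x=\emptyset$ for all $0<|n|\le N$. Using SBP we shrink $W_x$ to an open $V_x\ni x$ with $\ocap(\partial V_x)=0$, so that $\mu(\partial V_x)=0$ for every $\mu\in\PP_T(X)$ by \eqref{eq123}. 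By compactness, finitely many sets $V_1,\dots,V_k$ cover $X$; each has small boundary and is disjoint from its translates $T^nV_i$ for $0<|n|\le N$.

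Next we merge these sets greedily. Put $U_1:=V_1$ and
\[
U_{i+1}:=U_i\cup\Bigl(V_{i+1}\setminus\bigcup_{|n|\le N}T^n\overline{U_i}\Bigr).
\]
One checks inductively that $U_i\cap T^nU_i=\emptyset$ for $0<|n|\le N$. A point of the new piece cannot be $T^n$ of a point of $U_i$, since it was removed; and two points of the new piece cannot be related by such $T^n$, because they lie in $V_{i+1}$. Each $U_i$ is open. The boundary of $U_i$ lies in the finite union $\bigcup_{j\le i}\bigcup_{|n|\le N}T^n\partial V_j$, which has orbit capacity $0$ (it is closed and each invariant measure gives it measure zero). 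Let $U:=U_k$; it satisfies the disjointness requirement for $F$.

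The main obstacle is covering: a point of $V_{i+1}$ may be discarded because it lies in $T^n\overline{U_i}$ yet not in $T^nU_i$, that is, it lies on a translate of a boundary. Every point of $X\setminus\bigcup_{|n|\le N}T^nU$ therefore lies in some $T^n\partial V_j$, so $E:=X\setminus\bigcup_{n\in\Z}T^nU$ is a closed invariant set contained in a closed set $B$ with $\ocap(B)=0$. If $E\neq\emptyset$, it supports an invariant measure giving $B$ positive mass, which is a contradiction. Hence $\bigcup_{n}T^nU\supseteq X\setminus E=X$.

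To make this precise we must check carefully that uncovered points lie in $\bigcup_{|n|\le N}T^n\partial V_j$ and not in some larger set. If that bookkeeping fails, we fall back on the $\ocap$ argument more directly: the set of points whose $T$-orbit never meets $U$ is closed and invariant, and within any window of length $2N+1$ the orbit must hit either $U$ or $B$; hence $\ocap(B)\ge 1/(2N+1)$ on that set, contradicting $\ocap(B)=0$.
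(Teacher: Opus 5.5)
Your proposal is correct and follows essentially the same route as the paper: the same greedy merging $U_{i+1}=U_i\cup\bigl(V_{i+1}\setminus\bigcup_{|n|\le N}T^n\overline{U_i}\bigr)$, and the same observation that an uncovered point must lie in $\bigcup_{|n|\le N}T^n\partial U_{i}$ for some $i$. Hence the closed invariant uncovered set sits inside a closed set that every invariant measure assigns mass zero, so it must be empty.

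One harmless slip: iterating $\partial U_{i+1}\subset \partial U_i\cup\partial V_{i+1}\cup\bigcup_{|n|\le N}T^n\partial U_i$ lets the translates accumulate, up to $|n|\le (i-1)N$ rather than $|n|\le N$. Since any finite union of translates of the $\partial V_j$ is still such a null closed set, your main argument goes through and the fallback is unnecessary.
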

\begin{proof}
    Let $N$ be a natural number. Since $(X, T)$ is aperiodic and has SBP, there exists an open cover $(U_j)_{j=1}^K$ such that 
    \begin{itemize}
        \item $U_k\cap T^{-n}U_k=\emptyset$ for $1\le n\le N$.
        \item $\ocap(\partial U_k)=0$.
    \end{itemize}
    We define open sets $V_1\subset V_2 \subset \cdots \subset V_K$ as follows:
    \begin{itemize}
        \item $V_1=U_1$.
        \item $V_{k+1}=V_k\cup \left( U_{k+1}\setminus \cup_{i=-N}^N T^i\overline{V_k} \right)$.
        \item $V=V_K$.
    \end{itemize}
    It is easy to check that $\ocap(\partial V_k)=0$ for $1\le k\le K$ and 
    $$
    V\cap T^{-n}V=\emptyset, \text{ for any } 1\le n\le N. 
    $$
    We will show $X=\cup_{n\in \Z} T^{n} V$. This  will imply that $(X,T)$ has the marker property.
    
    Suppose $x\in X\setminus \cup_{n\in \Z} T^{n} V$. Then $x\in U_{k_0}$ for some $2\le k_0\le K$. On the other hand, $x\notin V_{k_0}=V_{k_0-1}\cup \left( U_{k_0}\setminus \cup_{i=-N}^N T^{i}\overline{V_{k_0-1}} \right)$. So 
    $$
    x\notin U_{k_0} \setminus \cup_{i=-N}^N T^{ i}\overline{V_{k_0-1}}.
    $$
    This implies that 
    $$
    x\in \cup_{i=-N}^N T^{ i}\overline{V_{k_0-1}}.
    $$
    But $x\notin \cup_{i=-N}^N T^{i}{V_{k_0-1}}$. Thus $x\in \cup_{i=-N}^N T^{ i}{\partial V_{k_0-1}}$. In particular,
    $$
    x\in  \cup_{k=1}^K\cup_{i=-N}^N T^{i}{\partial V_{k}}.
    $$
    Since $X\setminus \cup_{n\in \Z} T^{n} V$ is $T$-invariant, we obtain that $T^mx\in X\setminus \cup_{n\in \Z} T^{n} V$ for each $m\in \Z$ and consequently
    $$
    \ocap \left( \cup_{k=1}^K\cup_{i=-N}^N T^{i}{\partial V_{k}} \right)=1.
    $$
    However since $\ocap(\partial V_k)=0$ for $1\le k\le K$, we have 
    $$
    \ocap \left( \cup_{k=1}^K\cup_{i=-N}^N T^{i}{\partial V_{k}} \right)=0.
    $$
    This is a contradiction.
\end{proof}

\section{Marker property and passage between iterates}
\begin{prop}\label{prop:mult. marker}
Let $(X,T)$ be a t.d.s. For $n\ge 1$, $(X,T)$ has the marker property if and only if $(X,T^n)$ has the marker property.

\end{prop}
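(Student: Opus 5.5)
We prove the two implications separately, working directly with Definition~\ref{defn:marker}.

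\emph{($\Leftarrow$) If $(X,T^n)$ has the marker property, then so does $(X,T)$.} Fix a finite set $F\subset\Z\setminus\{0\}$. Put
\[
F':=\{m\in\Z\setminus\{0\}: nm\in F+\{-(n-1),\dots,n-1\}\}\cup\{m\in \Z\setminus\{0\}: |nm|\le \max|F|+n\},
\]
or more simply the finite set of nonzero $m$ with $|nm|\le \max_{k\in F}|k|+n$. Apply the marker property of $(X,T^n)$ with this $F'$ to get an open $U$ with $X=\bigcup_{m}T^{nm}U$ and $U\cap T^{nm}U=\emptyset$ for $m\in F'$. The covering condition $X=\bigcup_{k\in\Z}T^kU$ is immediate, since the $T^{nm}$-translates already cover $X$. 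The disjointness condition $U\cap T^kU=\emptyset$ is guaranteed only when $n\mid k$, so $U$ itself may fail for $k\in F$ not divisible by $n$. The fix is to enlarge $F'$ so that $U$ is a marker for all multiples of $n$ up to size $M:=\max|F|$. Then, using a finite open cover by small sets, refine $U$ to an open $U'\subset U$ for which the $T^k$ with $k\in F$, $n\nmid k$ also separate. The covering condition for $U'$ must still hold, now allowing all $T$-translates rather than only $T^n$-translates.

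\emph{($\Rightarrow$) If $(X,T)$ has the marker property, then so does $(X,T^n)$.} Given finite $F\subset\Z\setminus\{0\}$, apply the marker property of $T$ with $nF$ to get an open $U$ with $U\cap T^{nk}U=\emptyset$ for $k\in F$ and $X=\bigcup_j T^jU$. The disjointness condition needed for $T^n$ is then already satisfied, so the only issue is covering by the $T^{n}$-translates. Set $W:=\bigcup_{i=0}^{n-1}T^iU$. This $W$ covers under $T^n$, but it can lose disjointness. To control this, choose $U$ as a marker for the larger finite set $nF+\{-(n-1),\dots,n-1\}$ with $0$ removed. Then for $k\in F$,
\[
W\cap T^{nk}W=\bigcup_{i,i'}\bigl(T^iU\cap T^{nk+i'}U\bigr),
\]
and each term is empty because $nk+i'-i\neq 0$ whenever $k\neq 0$ and $|i'-i|<n$. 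Hence $W$ is a $T^n$-marker for $F$, and this direction is clean.

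\emph{The obstacle is ($\Leftarrow$).} For it I would argue through the dynamical characterization: the marker property is equivalent to the existence, for every $N$, of an open $U$ such that $U\cap T^kU=\emptyset$ for $0<|k|\le N$ and $X=\bigcup_{|j|\le L}T^jU$ for some $L$. I would first derive this bounded-gap version by compactness (the Appendix-style argument); this also requires that the marker property forces aperiodicity. Aperiodicity of $T$ follows from that of $T^n$, since a $T$-periodic point is $T^n$-periodic. With that in hand, take the $T^n$-marker $U$ for separations up to $nN$, so that $U$ is disjoint from $T^{k}U$ whenever $n\mid k$ and $0<|k|\le nN$. Then run the disjointification procedure of Proposition~\ref{prop:appendix}, without needing small boundaries, in the following form. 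Cover $X$ by finitely many open sets $O_1,\dots,O_r$, each disjoint from its own $T^k$-images for $0<|k|\le N$; these exist by aperiodicity and compactness. Greedily build $V$ inside $\bigcup_{j<n}T^jU$ by intersecting with the $O_i$ and discarding the parts within $T^{[-N,N]}$ of the already chosen closure. The points not covered end up on boundaries, and it is exactly this part that needs checking; to handle it I would replace closures by slightly shrunken open sets, as in the standard proof that aperiodic finite-dimensional systems have markers.
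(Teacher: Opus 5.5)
Your proof of $(X,T)\Rightarrow(X,T^n)$ is correct, and more careful than the paper, which calls this direction immediate. A $T$-marker for $nF$ need not cover $X$ under $T^n$-translates, and passing to $W=\bigcup_{i=0}^{n-1}T^iU$, with $U$ a marker for $nF+\{-(n-1),\dots,n-1\}$, is exactly what is needed.

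The converse, however, has a genuine gap. Your first sketch asserts that the $T^n$-marker can be shrunk to an open $U'$ that also separates the $T^k$ with $n\nmid k$ while still meeting every $T$-orbit; that is the whole content of the statement, and no mechanism is given. The second sketch fails at exactly the step you flag. In Proposition~\ref{prop:appendix}, a point missed by $\bigcup_m T^mV$ has its entire orbit inside $\bigcup_k\bigcup_{|i|\le N}T^i\partial V_k$, and only $\ocap(\partial V_k)=0$ (i.e.\ SBP) excludes this; shrinking closures just moves the boundaries. Worse, the covering argument you outline uses nothing about $U$ except that $\bigcup_{j<n}T^jU$ meets every orbit. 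It would therefore work verbatim with $X$ in place of that set, and so would show that every aperiodic system has the marker property. That is false: the paper cites aperiodic systems without it \cite{DL25}. Likewise, the finite-dimensional marker proofs you allude to rely on dimension, which you do not have here.

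What is missing is a genuine use of the separation of the $T^n$-marker along $T$-orbits, which is how the paper proceeds. Take $V$ open with $V\cap T^{-nq}V=\emptyset$ for $0<|q|<L$, where $L>N$, and with $X=\bigcup_{m\in F}T^{nm}V$. Let $a$ be continuous with $V=\{a>0\}$, and choose $0<\eta<\min_x\max_{m\in F}a(T^{-nm}x)$. For each $x$, the set $A_x=\{k: a(T^kx)>\eta\}$ is syndetic, and two elements of $A_x$ congruent mod $n$ are at least $nL$ apart. Hence any chain in $A_x$ with consecutive gaps $<N$ has at most $n$ elements, and the last element $j$ of a maximal chain has no element of $A_x$ in $(j,j+N)$. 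One then takes $U=\{a>\eta,\ a\circ T^i<\eta\ \text{for}\ 1\le i<N\}$, which is open and satisfies $U\cap T^{-i}U=\emptyset$ for $1\le i<N$. Be aware that the covering step is delicate on this route too: maximality only gives $a(T^{j+i}x)\le\eta$, not $<\eta$, so the case of equality has to be excluded by an additional argument. This is the same boundary phenomenon that stalls your greedy construction, and the paper's write-up passes over it.
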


\begin{proof}
The implication $(X,T)\Rightarrow (X,T^n)$ is immediate. We prove the converse. Set $S:=T^n$ and fix $N\ge 1$. Let $L>N$. Since $(X,S)$ has the marker property, there exists an open set $V\subset X$ such that
\[
V\cap S^{-q}V=\emptyset \quad (0<|q|<L), 
\qquad
X=\bigcup_{m\in\mathbb Z} S^mV.
\]
By compactness, choose a finite $F\subset\mathbb Z$ with $X=\bigcup_{m\in F}S^mV$. Let $a:X\to[0,1]$ be continuous with $V=\{a>0\}$ and define
\[
b(x):=\max_{m\in F} a(S^{-m}x).
\]
Then $b(x)>0$ for all $x$, hence $\beta:=\min b>0$. Fix $0<\eta<\beta$ and define
\[
U:=\{x:\ a(x)>\eta,\ a(T^i x)<\eta\ \text{for }1\le i<N\}.
\]

If $x\in U\cap T^{-i}U$ for some $1\le i<N$, then $a(T^i x)$ is both $>\eta$ and $<\eta$, a contradiction. Hence $U\cap T^{-i}U=\emptyset$ for $1\le i<N$.

For $x\in X$, set $A_x:=\{k\in\mathbb Z:\ a(T^k x)>\eta\}$. Since $b(T^r x)\ge\beta$, there exist $m\in F$ such that
\[
a(S^{-m}T^r x)=a(T^{r-nm}x)>\eta,
\]
so $r-nm\in A_x$. Thus every translate $r-nF$ meets $A_x$, hence $A_x$ is syndetic and in particular nonempty.

If $k,\ell\in A_x$ and $k\equiv \ell\ (\mathrm{mod}\ n)$, write $\ell-k=nq$. If $0<|q|<L$, then
\[
T^k x,\ T^\ell x=S^q(T^k x)\in V,
\]
contradicting $V\cap S^{-q}V=\emptyset$. Hence $|\ell-k|\ge nL$.

Call $k_0<\cdots<k_m$ an $N$-string in $A_x$ if $k_{j+1}-k_j<N$. Then every $N$-string has at most $n$ elements, since otherwise two indices are congruent mod $n$ and violate the previous bound.

Let $C\subset A_x$ be a maximal $N$-string and set $j=\max C$. By maximality, $a(T^{j+i}x)<\eta$ for $1\le i<N$, while $a(T^j x)>\eta$, so $T^j x\in U$. Hence $x\in \bigcup_{j\in\mathbb Z}T^jU$. Since $x$ was arbitrary, $U$ is an open $N$-marker for $T$.
\end{proof}

\bibliographystyle{alpha}

\bibliography{universal_bib}

\end{document}